\documentclass[11pt]{article}

\usepackage[margin=1in]{geometry}
\usepackage{graphicx}

\usepackage{mathtools}
\usepackage{amssymb}
\usepackage{amsthm}
\usepackage{thmtools}
\usepackage{mleftright}
\usepackage{fourier}

\usepackage{enumitem}
\usepackage{titlesec}
\usepackage[dvipsnames]{xcolor}

\usepackage[pagebackref]{hyperref}
\usepackage[nameinlink,capitalise]{cleveref}
\usepackage{doi}

\hypersetup{
  colorlinks=true,
  linkcolor=Green,
  citecolor=NavyBlue,
  filecolor=Plum,
  urlcolor=Plum
}

\renewcommand*{\backref}[1]{}
\renewcommand*{\backrefalt}[4]{%
  \ifcase #1
    (No citations.)%
  \or
    (Cited page~#2.)%
  \else
    (Cited pages~#2.)%
  \fi
}

\newcommand{\real}{\mathbb{R}}
\newcommand{\complex}{\mathbb{C}}
\newcommand{\field}{\mathbb{K}}

\DeclareMathOperator{\tr}{tr}
\DeclareMathOperator{\diag}{diag}
\DeclareMathOperator{\range}{range}
\DeclareMathOperator{\rank}{rank}
\newcommand{\mat}[1]{\boldsymbol{#1}}
\renewcommand{\vec}[1]{\boldsymbol{#1}}
\newcommand{\lowrank}[1]{%
  \mleft[\!\mleft[ #1 \mright]\!\mright]}
\newcommand{\norm}[1]{\mleft\| #1 \mright\|}
\newcommand{\Id}{\mathbf{I}}
\newcommand{\evec}{\mathbf{e}}

\newcommand{\twobytwo}[4]{%
  \begin{bmatrix}#1 & #2 \\ #3 & #4\end{bmatrix}}
\newcommand{\twobyone}[2]{%
  \begin{bmatrix}#1 \\ #2\end{bmatrix}}
\newcommand{\onebytwo}[2]{%
  \begin{bmatrix}#1 & #2\end{bmatrix}}

\DeclareMathOperator{\expect}{\mathbb{E}}
\DeclareMathOperator{\Unif}{\textsc{Unif}}
\newcommand{\order}{\mathcal{O}}

\DeclareMathOperator*{\argmin}{argmin}
\newcommand{\iu}{\mathrm{i}}

\renewcommand{\tilde}[1]{\widetilde{#1}}

\newcommand{\Wishart}{\textnormal{\textsc{Wishart}}}
\newcommand{\Beta}{\textnormal{\textsc{Beta}}}
\newcommand{\hatbold}[1]{%
  \skew{4}\widehat{\smash{\boldsymbol{#1}}\mathstrut}}
\newcommand{\Ahat}{\smash{\hatbold{A}}}
\newcommand{\Xhat}{\smash{\hatbold{X}}}
\newcommand{\Hhat}{\smash{\hatbold{H}}}

\makeatletter
\def\th@plain{%
  \thm@notefont{}%
  \itshape
}
\def\th@definition{%
  \thm@notefont{}%
  \normalfont
}
\makeatother

\declaretheorem[name=Theorem,numberwithin=section]{theorem}
\declaretheorem[name=Proposition,numberlike=theorem]{proposition}
\declaretheorem[name=Fact,numberlike=theorem]{fact}
\declaretheorem[name=Conjecture,numberlike=theorem]{conjecture}
\declaretheorem[name=Lemma,numberlike=theorem]{lemma}

\theoremstyle{definition}
\declaretheorem[name=Definition,numberlike=theorem]{definition}

\numberwithin{equation}{section}

\crefname{equation}{}{}
\crefname{section}{section}{sections}
\crefname{appendix}{appendix}{appendices}

\titleformat{\subsection}[runin]
  {\normalfont\normalsize\bfseries}
  {\thesubsection}{0.5em}{}[\textbf{.}]

\titleformat{\subsubsection}[runin]
  {\normalfont\normalsize\scshape}
  {\thesubsubsection}{0.5em}{}[.]

\titleformat{\paragraph}[runin]
  {\normalfont\normalsize\bfseries\itshape}
  {\textit{\theparagraph}}{0.5em}{}[]

\newcommand{\mytitle}{%
  Sharp analysis of sketched least squares and randomized low-rank approximation}

\title{\mytitle}

\author{%
  Ethan N. Epperly%
  \thanks{Department of Mathematics, University of California Berkeley
  (\href{mailto:eepperly@berkeley.edu}{eepperly@berkeley.edu},
  \url{https://ethanepperly.com})}
  \and
  Robert J. Webber%
  \thanks{Department of Mathematics, University of California San Diego
  (\href{mailto:rwebber@ucsd.edu}{rwebber@ucsd.edu},
  \url{https://sites.google.com/ucsd.edu/rwebber/})}}

\date{\today}

\begin{document}

\maketitle

\begin{abstract}
Two widely used randomized algorithms are the sketch-and-solve method for least-squares regression and the randomized SVD for low-rank approximation.
These algorithms apply a random embedding to compress a target matrix, and they perform computations on the compressed matrix to save computational cost.
This paper asks, what is the optimal random embedding in these algorithms?
Also, what is the sharpest possible error bound for the optimal embedding?
The paper proves that a random orthonormal matrix is minimax optimal for the sketch-and-solve algorithm while any full-rank rotation-invariant embedding is minimax optimal for the randomized SVD.
Following these results, the paper obtains the best possible error bounds for sketched least-squares and the randomized SVD.
Last, empirical experiments provide evidence of universality phenomena, in which several random embeddings lead to similar accuracy to the optimal embeddings in practice.
\end{abstract}

\section{Introduction}

Two cornerstone algorithms in randomized linear algebra are the sketch-and-solve method for linear least-squares regression \cite{DMM06,Sar06} and the randomized SVD \cite{HMT11} for low-rank approximation.
These algorithms multiply a high-dimensional input matrix $\mat{A}$ on the right or left by a random embedding $\mat{\varOmega}$ and process the low-dimensional output $\mat{A} \mat{\varOmega}$ or $\mat{\varOmega}^*\mat{A}$ to produce an approximate solution to a linear algebra problem.
It is believed that the most accurate, if not the most computationally efficient, choice for the random embedding $\mat{\varOmega}$ is a \emph{random orthonormal embedding}, a uniformly random matrix with orthonormal columns.

This paper confirms the prevailing belief.
We analyze the mean-squared error of sketch-and-solve and the randomized SVD with worst-case input matrices, and we identify \emph{minimax optimal} embeddings that minimize the worst-case mean-squared error.
For the sketch-and-solve algorithm, we prove that a random orthonormal embedding is minimax optimal.
For the randomized SVD, we establish the minimax optimality of any full-rank rotation-invariant random embedding.
In the process, we derive sharp error bounds on the mean-squared error of the sketch-and-solve and randomized SVD algorithms that depend only on the rank of $\mat{A}$ and dimensions of $\mat{\varOmega}$.
These bounds improve on all past work, and they are unimprovable without assuming more structure on $\mat{A}$.
The bounds determine the minimal dimensions needed to match a given error threshold, and we identify hard instances that fully saturate the error bounds.

Our work contributes to an ongoing research effort \cite{DL19a,TYUC19,MT20,Der23,DM23,MDM+23a,CEMT25,Epp25a} to understand which random embeddings are most accurate and computationally efficient
in randomized linear algebra.
We identify sharp limits on the behavior of any random embedding and establish that a random orthonormal matrix meets these limits.
Thus, random orthonormal matrices can serve as a standard of comparison for other, more computationally efficient random embeddings.

Last, we perform numerical experiments with several embeddings in common use.
Consistent with past work \cite{DL19a,TYUC19,MT20,Der23,CEMT25}, we observe that sketch-and-solve and the randomized SVD exhibit universal behavior: for these algorithms, any well-designed random embedding performs similarly to either a Gaussian embedding or a random orthonormal embedding with the same dimensions.
This paper provides empirical evidence of universality and discusses when embeddings from the Gaussian and random orthonormal universality classes behave differently.
Looking forward, further theoretical work is needed to validate our observations of universality.

\subsection{Definitions} \label{sec:definitions}
We work over the field $\field$ of real or complex numbers.
To state results over both fields, we define
\begin{equation*}
    \alpha_\field = \begin{cases}
            1, & \text{if } \field = \real, \\
            0, & \text{if } \field = \complex.
    \end{cases}
\end{equation*}
The adjoint is ${}^*$ and the Moore--Penrose pseudoinverse is ${}^+$.
For this paper, a positive-semidefinite (psd) matrix is conjugate symmetric and has nonnegative eigenvalues.
We use $\mat{\varPi}_{\mat{F}}$ to denote the orthogonal projection onto the range of $\mat{F}$.

The real standard normal distribution is denoted $\mathcal{N}_\real(0,1)$.
The complex standard normal distribution is denoted $\mathcal{N}_\complex(0,1)$, and a draw from this distribution may be generated as $(g_1 + \iu g_2) / \sqrt{2}$ for independent $g_1,g_2 \sim \mathcal{N}_\real(0,1)$.
An embedding $\mat{\varOmega} \in \field^{n \times \ell}$ is any tall matrix with $n \geq \ell$.
A random embedding $\mat{\varOmega} \in \field^{n \times \ell}$ is rotation-invariant if $\mat{U}\mat{\varOmega} \sim \mat{\varOmega}$ for every unitary matrix $\mat{U} \in \field^{n\times n}$.
In particular, a rotation-invariant embedding $\mat{\varOmega} \in \field^{n\times \ell}$ is called a \emph{random orthonormal matrix} if $\mat{\varOmega}^*\mat{\varOmega}= \Id$ with probability one.
It is called a \emph{Gaussian embedding} over $\field$ if it is populated with independent $\mathcal{N}_\field(0,1)$ random variables.

\subsection{Organization}

The rest of the paper is organized as follows.
Theoretical analysis and universality results for sketch-and-solve are in
\cref{sec:sketch_and_solve} and matching results for the randomized SVD are in \cref{sec:rsvd-description}.
Next, 
\cref{sec:gen_nystrom} provides a complementary analysis of the generalized Nystr\"om approximation, which may be of independent interest.
The proofs are divided into two sections: \cref{sec:sketch_analysis} analyzes sketch-and-solve and \cref{sec:rsvd-nystom-analysis} analyzes low-rank approximation.
\Cref{sec:conclusion} concludes and presents open problems.

\section{Results for sketch-and-solve} \label{sec:sketch_and_solve}

The sketch-and-solve method was developed by Drineas, Mahoney, \& Muthukrishnan \cite{DMM06} and Sarl\'os \cite{Sar06} to approximate the solution to a linear least-squares problem
\begin{equation} \label{eq:least-squares}
    \min_{\mat{X} \in \field^{d\times p}} \norm{\mat{B} - \mat{A}\mat{X}}_{\rm F}^2 \quad \text{where } \mat{A} \in \field^{n\times d} \text{ and } \mat{B} \in \field^{n\times p}.
\end{equation}
The minimum-norm solution of this problem is $\mat{X}_\star = \mat{A}^+\mat{B}$.
To approximate the minimum-norm solution, the sketch-and-solve method multiplies $\mat{A}$ and $\mat{B}$ by a random embedding $\mat{\varOmega} \in \field^{n \times \ell}$ on the left and solves the ``sketched'' least-squares problem
\begin{equation} \label{eq:sketch-solve}
    \Xhat \coloneqq
    (\mat{\varOmega}^*\mat{A})^+ (\mat{\varOmega}^*\mat{B}) \in \argmin_{\mat{X} \in \field^{d\times p}} \norm{\smash{\mat{\varOmega}^*\mat{B} - (\mat{\varOmega}^*\mat{A})\mat{X}}}_{\rm F}^2.
\end{equation}
The resulting approximation $\Xhat$ is called the \emph{sketch-and-solve solution}.

\subsection{Sketch-and-solve: Theoretical analysis} \label{sec:sketch-solve-results}

We provide an exact formula for the sketch-and-solve residual norm when the embedding is a Gaussian or a random orthonormal matrix.
The proof is in \cref{sec:partial-isometry-proof}.
\begin{theorem}[Sketch-and-solve: Exact analysis] \label{thm:partial-isometry}
    Let $\mat{\varOmega} \in \field^{n \times \ell}$ be a Gaussian or random orthonormal embedding, and consider matrices $\mat{A} \in \field^{n\times d}$ and $\mat{B} \in \field^{n\times p}$, for which $r \coloneqq \rank(\mat{A}) < \ell - \alpha_\field$.    
    Then the sketch-and-solve solution $\Xhat = (\mat{\varOmega}^*\mat{A})^+ (\mat{\varOmega}^*\mat{B})$ satisfies 
        \begin{equation*}
        \begin{alignedat}{2}
        \expect[\Xhat] &= \mat{A}^+ \mat{B},& \quad \text{(either embedding)}
        \end{alignedat}
    \end{equation*}
    and either
    \begin{equation*}
        \begin{alignedat}{2}
        \expect \norm{\smash{\mat{B} - \mat{A}\Xhat}}_{\rm F}^2 &= \Biggl(1 + \frac{r}{\ell - r - \alpha_\field}\Biggr) \min_{\mat{X}\in\field^{d\times p}}\norm{\mat{B} - \mat{A}\mat{X}}_{\rm F}^2, & \quad \text{(Gaussian)} \\
        \text{or } \expect \norm{\smash{\mat{B} - \mat{A}\Xhat}}_{\rm F}^2 &= \Biggl(1 + \frac{n-\ell}{n-r} \frac{r}{\ell - r - \alpha_\field}\Biggr) \min_{\mat{X}\in\field^{d\times p}}\norm{\mat{B} - \mat{A}\mat{X}}_{\rm F}^2. & \quad \text{(random orthonormal)}
        \end{alignedat}
    \end{equation*}
\end{theorem}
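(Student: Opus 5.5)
Our plan is to reduce to a canonical form using rotation invariance and then compute the error with block-matrix identities for Gaussian and orthonormal matrices.

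\emph{Reduction.} Write $\mat{A} = \mat{Q}\mat{R}$, where $\mat{Q} \in \field^{n\times r}$ has orthonormal columns spanning $\range(\mat{A})$. Complete $\mat{Q}$ to a unitary matrix $[\mat{Q}\ \mat{Q}_\perp]$, and decompose $\mat{B} = \mat{A}\mat{X}_\star + \mat{Q}_\perp\mat{E}$ with $\mat{X}_\star = \mat{A}^+\mat{B}$. The residual $\mat{B}-\mat{A}\mat{X}_\star$ is orthogonal to $\range(\mat{A})$, and $\norm{\mat{E}}_{\rm F}^2$ equals the least-squares minimum. Set $\mat{\varOmega}_1 = \mat{Q}^*\mat{\varOmega}\in\field^{r\times \ell}$ and $\mat{\varOmega}_2=\mat{Q}_\perp^*\mat{\varOmega}\in\field^{(n-r)\times\ell}$. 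Rotation invariance lets us replace $[\mat{Q}\ \mat{Q}_\perp]^*\mat{\varOmega}$ by $\mat{\varOmega}$ in distribution.

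Since $\mat{\varOmega}_1^*$ has full column rank $r$ almost surely, $\mat{\varOmega}^*\mat{A} = \mat{\varOmega}_1^*\mat{R}$ has the same range as $\mat{\varOmega}_1^*$. Therefore
\[
\mat{A}\Xhat = \mat{A}\mat{X}_\star + \mat{Q}(\mat{\varOmega}_1^*)^+\mat{\varOmega}_2^*\mat{E}.
\]
One checks this using $\mat{R}(\mat{\varOmega}_1^*\mat{R})^+ = (\mat{\varOmega}_1^*)^+$ when $\mat{R}$ has full row rank. The same reasoning gives $\Xhat = \mat{X}_\star + \mat{R}^+(\mat{\varOmega}_1^*)^+\mat{\varOmega}_2^*\mat{E}$. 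The key object is therefore $\mat{M} = (\mat{\varOmega}_1^*)^+\mat{\varOmega}_2^*$. The result follows from two claims:
\begin{itemize}
\item[(i)] $\expect[\mat{M}]=\mathbf{0}$;
\item[(ii)] $\expect[\mat{M}^*\mat{M}] = c\,\Id_{n-r}$ with the stated constant $c$.
\end{itemize}
Given these, $\expect\norm{\mat{B}-\mat{A}\Xhat}_{\rm F}^2 = \norm{\mat{E}}_{\rm F}^2 + \expect\norm{\mat{M}\mat{E}}_{\rm F}^2 = (1+c)\norm{\mat{E}}_{\rm F}^2$ by Pythagoras.

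\emph{Unbiasedness.} For both embeddings, $(\mat{\varOmega}_1,\mat{\varOmega}_2)\sim(\mat{\varOmega}_1,-\mat{\varOmega}_2)$; for the orthonormal case this follows from invariance under $\diag(\Id,-\Id)$. Hence $\mat{M}$ is symmetric about zero, so $\expect\mat{M}=\mathbf{0}$, provided the expectation is finite. Finiteness comes from the second-moment computation below.

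\emph{Gaussian case.} Here $\mat{\varOmega}_1$ and $\mat{\varOmega}_2$ are independent. Conditioning on $\mat{\varOmega}_1$ gives $\expect[\mat{M}^*\mat{M}] = \expect\tr\bigl((\mat{\varOmega}_1\mat{\varOmega}_1^*)^{-1}\bigr)\,\Id$. The inverse-Wishart mean gives $\expect(\mat{\varOmega}_1\mat{\varOmega}_1^*)^{-1} = \Id/(\ell-r-\alpha_\field)$, so $c = r/(\ell-r-\alpha_\field)$. This uses $r<\ell-\alpha_\field$.

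\emph{Orthonormal case (main obstacle).} Here $\mat{\varOmega}_1$ and $\mat{\varOmega}_2$ are dependent. We realize $\mat{\varOmega} = \mat{G}(\mat{G}^*\mat{G})^{-1/2}$ with $\mat{G}$ Gaussian. Since $\mat{M}$ is unchanged by right multiplication of $\mat{\varOmega}$ by an invertible matrix, $\mat{M} = (\mat{G}_1^*)^+\mat{G}_2^*$. The same formula holds for the Gaussian case, so the two embeddings give the same $\mat{M}$. The difference between the two constants must therefore come from the residual term alone.

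To find it, we re-examine: projecting onto $\range(\mat{Q}_\perp)$, the residual is $\mat{Q}_\perp\mat{E}$ exactly. The error is $\mat{Q}\mat{M}\mat{E}$, with the same $\mat{M}$ in both cases. But $\mat{M}$ depends on $\mat{\varOmega}$ only through its range, and a Gaussian matrix and an orthonormal matrix have the same range distribution. So the two formulas would have to coincide, contradicting the theorem.

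The resolution must be that $\range(\mat{\varOmega})$ is not what matters. Since $\Xhat$ uses $\mat{\varOmega}^*$ on the left, the relevant object is $\mat{\varOmega}\mat{\varOmega}^*$, which is a projection for the orthonormal embedding but not for the Gaussian one. Indeed $(\mat{\varOmega}_1^*)^+\mat{\varOmega}_2^*$ is \emph{not} invariant under $\mat{\varOmega}\mapsto\mat{\varOmega}\mat{S}$; what is invariant is the row space of $\mat{\varOmega}^*$. So for the orthonormal case we will compute directly. Write $\mat{\varOmega}_1^* = \mat{U}\mat{\Sigma}\mat{V}^*$; then $\mat{\varOmega}_2\mat{\varOmega}_2^* = \Id - \mat{\varOmega}_1\mat{\varOmega}_1^*$ is not directly usable. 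Instead we use $\mat{\varOmega}_2^*$ restricted via $\mat{\varOmega}_2^*\mat{\varOmega}_2 = \Id_\ell - \mat{\varOmega}_1^*\mat{\varOmega}_1$, which gives
\[
\tr\expect[\mat{M}^*\mat{M}] = \expect\tr\bigl[(\mat{\varOmega}_1\mat{\varOmega}_1^*)^{-1}\bigr] - r.
\]
The law of $\mat{\varOmega}_1\mat{\varOmega}_1^*$ is matrix-Beta with parameters $(\ell, n-\ell)$. Its inverse mean is $\frac{n-r-\alpha_\field}{\ell-r-\alpha_\field}\Id$, obtained via the Wishart-ratio representation $\mat{W}_1(\mat{W}_1+\mat{W}_2)^{-1}$ and expectation identities. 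Then, by invariance, $\expect[\mat{M}^*\mat{M}] = \frac{r}{n-r}\bigl(\frac{n-r-\alpha_\field}{\ell-r-\alpha_\field}-1\bigr)\Id = \frac{n-\ell}{n-r}\frac{r}{\ell-r-\alpha_\field}\Id$, matching the claim. The Beta inverse-moment computation is the hard step.
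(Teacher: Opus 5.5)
Your final argument is correct and is essentially the paper's proof: the error decomposition of \cref{lem:sketch-solve-formula}; the inverse-Wishart mean in the Gaussian case; and, in the orthonormal case, the identity $\mat{\varOmega}_2^*\mat{\varOmega}_2 = \Id - \mat{\varOmega}_1^*\mat{\varOmega}_1$, invariance under rotations of the last $n-r$ coordinates, and the inverse mean of $\mat{\varOmega}_1\mat{\varOmega}_1^* \sim \Beta_\field(r,\ell,n)$ (\cref{prop:beta_exp,prop:partial-isom} with $\mat{R}=\Id$), with your sign-flip symmetry justifying $\expect\mat{M}=\mathbf{0}$ more explicitly than the paper does. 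Excise the abandoned detour: the claim that $\mat{M}$ is unchanged under $\mat{\varOmega}\mapsto\mat{\varOmega}\mat{S}$ is false, since that substitution turns the sketched problem into a weighted one (cf.\ \cref{lem:weighting_hurts}), and ``$\mat{\varOmega}_2\mat{\varOmega}_2^* = \Id-\mat{\varOmega}_1\mat{\varOmega}_1^*$'' does not even match dimensions. Also, the ``hard step'' is only a few lines. Realize $\mat{\varOmega}_1$ as the first $\ell$ columns of $(\mat{G}\mat{G}^*)^{-1/2}\mat{G}$ for an $r\times n$ Gaussian $\mat{G}$, rather than orthonormalizing the columns of an $n\times\ell$ Gaussian. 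This gives $\tr\bigl((\mat{\varOmega}_1\mat{\varOmega}_1^*)^{-1}\bigr) = r + \tr(\mat{W}_1^{-1}\mat{W}_2)$ with independent Wisharts, whose expectation follows from the inverse-Wishart mean.
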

\noindent The result for real Gaussian embeddings was proved by Bartan \& Pilanci \cite[Lem.~2.1]{BP20}, and we extend their result to complex Gaussians.
We believe the results for real and complex random orthonormal matrices are new.
\Cref{thm:partial-isometry} leads to the following practical implications.
\begin{itemize}
    \item \textbf{\textit{Orthogonality helps, but only a bit.}}
    Random orthonormal embeddings lead to less error than Gaussian embeddings.
    However, the practical setting uses $\ell\ll n$, so the difference between the embeddings is small.
    There is little reason to prefer the random orthonormal matrices over Gaussian matrices for most use cases.
    \item \textit{\textbf{Complex numbers help, but only a bit.}}
    The difference between real and complex embeddings is tiny.
    A real Gaussian or random orthonormal embedding of dimension $\ell$ yields nearly the same accuracy as its complex analog of dimension $\ell-1$.
    \item \textit{\textbf{Minimal embedding dimension.}}
    When the matrix in the least-squares problem has rank $r$, the minimal embedding dimension to achieve a finite expected error is $\ell_{\rm min} = r + 1 + \alpha_\field$.
    A Gaussian embedding with the minimal dimension achieves
    \begin{equation*}
        \expect \norm{\smash{\mat{B} - \mat{A}\Xhat}}_{\rm F}^2
        = (r + 1) \min_{\mat{X}\in\field^{d\times p}}\norm{\mat{B} - \mat{A}\mat{X}}_{\rm F}^2.
    \end{equation*}
    A random orthonormal embedding with the minimal dimension achieves a slightly better mean-squared error.
    \item \textit{\textbf{Sufficient embedding dimension.}}
    An embedding dimension of $\ell \ge r/\varepsilon + r + \alpha_\field$ suffices to guarantee the following $\varepsilon$-accuracy condition:
    \begin{equation*}
        \expect \norm{\smash{\mat{B} - \mat{A}\Xhat}}_{\rm F}^2 \le (1+\varepsilon) \min_{\mat{X}\in\field^{d\times p}}\norm{\mat{B} - \mat{A}\mat{X}}_{\rm F}^2.
    \end{equation*}
    A simpler analysis (e.g., \cite[Prop.~5.3]{KT24}) gives the incorrect scaling $\order(r/\varepsilon^2)$.
\end{itemize}
\Cref{thm:partial-isometry} justifies these statements rigorously for Gaussian and random orthonormal matrices.
By universality, we expect these conclusions to extend to more general random embeddings.

Our second result establishes a sharp lower bound on the sketch-and-solve error for any random embedding $\mat{\varOmega}$ that does not depend on the problem data $(\mat{A},\mat{B})$.
The proof is in \cref{sec:proof_optimality}.
\begin{theorem}[Sketch-and-solve: Sharp lower bound] \label{thm:sketch-solve-optimality}
    Let $\mat{\varOmega} \in \field^{n \times \ell}$ be any full-rank random embedding.
    For any rank parameter $r \in \mathbb{N}$ with $r < \ell - \alpha_\field$ and any problem dimensions $d, p \in \mathbb{N}$ with $d \geq r$,
    \begin{equation*}
        \sup_{\substack{\mat{A}\in \field^{n\times d},\: \mat{B} \in \field^{n\times p} \\
        \rank(\mat{A}) = r,\: \mat{B} \neq \mat{A} \mat{A}^+ \mat{B}}} \frac{\expect \norm{\mat{B} - \mat{A}(\mat{\varOmega}^*\mat{A})^+ (\mat{\varOmega}^*\mat{B})}_{\rm F}^2}{\norm{\mat{B} - \mat{A} \mat{A}^+ \mat{B}}_{\rm F}^2} 
        \ge 1 + \frac{n-\ell}{n-r}\, \frac{r}{\ell - r - \alpha_\field}.
    \end{equation*}
\end{theorem}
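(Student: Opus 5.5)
We will prove the lower bound by the standard minimax device of randomizing the input. A supremum over inputs dominates any average over a distribution of inputs. The plan is to choose a rotation-invariant distribution on $(\mat{A},\mat{B})$ for which the average error ratio equals the random orthonormal value from \cref{thm:partial-isometry}, no matter what full-rank embedding $\mat{\varOmega}$ is used.

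\textbf{Reduction.} Fix $r$, $d\ge r$, and take $p=1$; larger $p$ is handled by padding $\mat{B}$ with zero columns. Take $\mat{A}_0 = [\Id_r\ \mat{0}]$ stacked above zeros in $\field^{n\times d}$, and $\vec{b}_0 = \evec_{r+1}$, so that $\norm{\vec{b}_0 - \mat{A}_0\mat{A}_0^+\vec{b}_0}=1$.
\begin{itemize}
\item Let $\mat{U}$ be Haar-distributed on the unitary group, independent of $\mat{\varOmega}$, and consider the inputs $(\mat{U}\mat{A}_0,\mat{U}\vec{b}_0)$. All of these have rank $r$ and unit residual.
\item The error for the input $(\mat{U}\mat{A}_0,\mat{U}\vec{b}_0)$ with embedding $\mat{\varOmega}$ equals the error for $(\mat{A}_0,\vec{b}_0)$ with embedding $\mat{U}^*\mat{\varOmega}$.
\end{itemize}
Therefore
\[
\sup \ \ge\ \expect_{\mat{U}}\expect_{\mat{\varOmega}}[\text{error}] = \expect\,\text{error}(\mat{A}_0,\vec{b}_0;\mat{U}^*\mat{\varOmega}).
\]

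\textbf{Rotation invariance of the embedding.} Conditional on $\mat{\varOmega}$, the matrix $\mat{U}^*\mat{\varOmega}$ is distributed as $\mat{Q}\mat{R}$, where:
\begin{itemize}
\item $\mat{Q}$ is a random orthonormal $n\times\ell$ matrix, independent of $\mat{R}$;
\item $\mat{\varOmega}=\mat{V}\mat{R}$ is a polar or QR decomposition with $\mat{R}$ invertible, which holds because $\mat{\varOmega}$ is full rank.
\end{itemize}
The sketch-and-solve solution is invariant under right-multiplication of the embedding by an invertible $\ell\times\ell$ matrix. Indeed, $(\mat{R}^*\mat{Q}^*\mat{A})^+\mat{R}^*\mat{Q}^*\vec{b}$ solves the same least-squares problem as $(\mat{Q}^*\mat{A})^+\mat{Q}^*\vec{b}$. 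One checks that $\mat{Q}^*\mat{A}_0$ has full column rank $r$ almost surely, so the minimizer's $\mat{A}$-image is unique and both give the same $\mat{A}\Xhat$. Hence the averaged error equals the error for a random orthonormal embedding $\mat{Q}$, which \cref{thm:partial-isometry} evaluates exactly as $1+\frac{n-\ell}{n-r}\frac{r}{\ell-r-\alpha_\field}$.

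\textbf{Main obstacle.} The delicate step is the invariance under $\mat{R}$. For $\mat{A}\Xhat$ to be unchanged, we need $\mat{\varOmega}^*\mat{A}$ to have an injective range map on $\range(\mat{A})$. More precisely, we need $\range(\mat{A})\cap\ker(\mat{\varOmega}^*)=\{0\}$ almost surely for the rotated problem, and we need to avoid zero-probability degenerate events. This follows because $\mat{Q}$ is Haar and $r<\ell$. One must also verify that the expectation exchange (Tonelli, since errors are nonnegative) is valid even when the expectation is infinite, and that the paddings for $d>r$ and $p>1$ do not change the ratio.
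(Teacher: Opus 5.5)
There is a genuine gap, at the step you treat as routine. Sketch-and-solve is \emph{not} invariant under $\mat{\varOmega} \mapsto \mat{\varOmega}\mat{R}$ when $\mat{R}$ is invertible but not unitary. The Nystr\"om approximation and the randomized SVD have this invariance because they depend only on $\range(\mat{\varOmega})$; sketch-and-solve does not. With the embedding $\mat{Q}\mat{R}$ you solve $\min_{\vec{x}} \norm{\mat{R}^*\mat{Q}^*(\vec{b} - \mat{A}\vec{x})}$, which is a weighted least-squares problem with weight $\mat{R}\mat{R}^*$. Since $\ell > r$, the sketched system $\mat{Q}^*\mat{A}\vec{x} = \mat{Q}^*\vec{b}$ is overdetermined and almost surely inconsistent, so a non-unitary weight generally moves the minimizer. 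The fact that each problem has a unique minimizer (your ``main obstacle'') does not make the two minimizers equal. For example, with sketched data $(1,1)^*$ and $(1,0)^*$, the unweighted solution is $1/2$, whereas weighting by $\diag(1,2)$ gives $1/5$.

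Your argument also contradicts \cref{thm:partial-isometry}. It applies verbatim to a Gaussian $\mat{\varOmega}$, which is itself a Haar factor times the invertible matrix $(\mat{\varOmega}^*\mat{\varOmega})^{1/2}$, and for which rotating the input changes nothing. It would then give the random orthonormal constant instead of $1 + r/(\ell - r - \alpha_\field)$, which is strictly larger when $\ell < n$. So the averaged error is generally not equal to the random orthonormal value. The real content of the theorem, that it is always \emph{at least} that value, is left unproved.

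The missing idea is that weighting can only hurt on average. Your reduction matches the paper: random rotation of the input, the factorization $\mat{U}^*\mat{\varOmega} \sim \mat{Q}\mat{R}$ with $\mat{Q}$ Haar and independent of $\mat{R}$, the padding, and the probabilistic-method step. After it, apply \cref{lem:sketch-solve-formula}. Write $\mat{Q}_1$ and $\mat{Q}_2$ for the first $r$ and last $n-r$ rows of the Haar factor, and set $\mat{L} = (\mat{R}^*\mat{Q}_1^*)^+\mat{R}^*$. The excess error is then $\norm{\mat{L}\mat{Q}_2^*\vec{c}}^2$, where $\vec{c}$ holds the last $n-r$ entries of $\vec{b}_0$.

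Conditionally on $\mat{\varOmega}$, the Haar factor is invariant under unitaries acting on the last $n-r$ coordinates, and these fix $\mat{L}$. Hence $\expect[\mat{Q}_2\mat{L}^*\mat{L}\mat{Q}_2^*]$ is a multiple of the identity. Combining this with $\mat{Q}_1^*\mat{Q}_1 + \mat{Q}_2^*\mat{Q}_2 = \Id$ and $\mat{L}\mat{Q}_1^* = \Id_r$, the expected excess error equals $(\expect\norm{\mat{L}}_{\rm F}^2 - r)\norm{\vec{c}}^2/(n-r)$.

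Now $\mat{L}$ is a left inverse of $\mat{Q}_1^*$, and $(\mat{Q}_1^*)^+$ is its minimum-Frobenius-norm left inverse. Therefore $\norm{\mat{L}}_{\rm F}^2 \ge \tr((\mat{Q}_1\mat{Q}_1^*)^{-1})$, which is the content of \cref{lem:weighting_hurts}. The matrix Beta computation in \cref{prop:beta_exp} gives $\expect\tr((\mat{Q}_1\mat{Q}_1^*)^{-1}) = r\bigl(1 + \frac{n-\ell}{\ell-r-\alpha_\field}\bigr)$, which yields exactly the bound $\frac{n-\ell}{n-r}\frac{r}{\ell-r-\alpha_\field}$.

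This is \cref{prop:partial-isom}, which the paper states as an inequality that holds with equality when $\mat{R} = \Id$. Invoking \cref{thm:partial-isometry} alone covers only that equality case.
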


\noindent The combination of \cref{thm:partial-isometry} with \cref{thm:sketch-solve-optimality} establishes that a random orthonormal matrix is the minimax optimal random embedding for sketch-and-solve.
Here, we understand minimax optimality in the standard game-theoretic sense.
Player 1 (the algorithm designer) chooses a random distribution for the matrix $\mat{\varOmega}$.
Then Player 2 (the adversary) uses their knowledge of the distribution of $\mat{\varOmega}$ to select a challenging inconsistent least-squares problem described by a pair $(\mat{A},\mat{B})$.
Player 1 seeks to minimize $\expect \norm{\smash{\mat{B} - \mat{A}(\mat{\varOmega}^*\mat{A})^+ (\mat{\varOmega}^*\mat{B})}}_{\rm F}^2 / \norm{\mat{B} - \mat{A} \mat{A}^+ \mat{B}}_{\rm F}^2$, while Player 2 seeks to maximize this ratio.
Together, \cref{thm:partial-isometry,thm:sketch-solve-optimality} show that Player 1's optimal strategy is to choose $\mat{\varOmega}$ as a random orthonormal embedding.

\subsection{Sketch-and-solve: Empirical evidence of universality} \label{sec:sketch-solve-discussion}

In most applications of the sketch-and-solve method, we require a fast algorithm for computing $\mat{\varOmega}^*\mat{A}$ \cite[sec.~9]{MT20}, which is not available for the Gaussian or random orthonormal embedding. 
Thus we cannot usually invoke \cref{thm:partial-isometry} directly.
Nevertheless, many common random embeddings perform similarly to a Gaussian or random orthonormal embedding, falling into one of two universality classes \cite{DL19a,TYUC19,Der23}.
\begin{itemize}
    \item The \textbf{\textit{Gaussian universality class}} contains embeddings whose behavior is nearly Gaussian. 
    Embeddings with nearly independent and identically distributed (iid) entries tend to fall into this class.
    \item The \textbf{\textit{random orthonormal universality class}} contains embeddings whose behavior is similar to a random orthonormal embedding.
    Random embeddings with orthonormal columns usually fall into this class.
\end{itemize}
\begin{figure}
    \centering
    \includegraphics[width=0.99\linewidth]{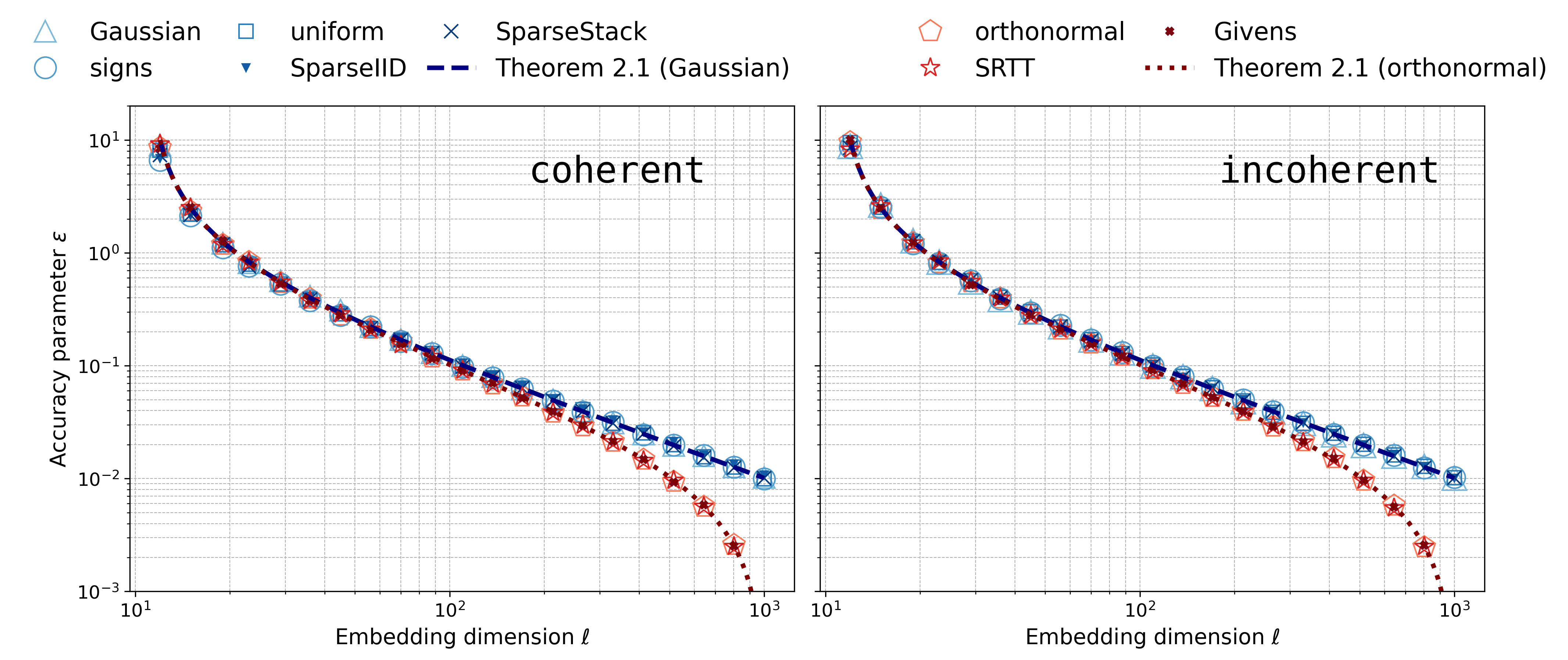}
    \caption{Accuracy parameter $\varepsilon = \expect \norm{\mat{B} - \mat{A}(\mat{\varOmega}^*\mat{A})^+ (\mat{\varOmega}^*\mat{B})}_{\rm F}^2 / \norm{\mat{B} - \mat{A} \mat{A}^+ \mat{B}}_{\rm F}^2 - 1$ as a function of embedding dimension $\ell$ for eight random embeddings.
    All embeddings follow the theoretical predictions of \cref{thm:partial-isometry} for either the Gaussian matrix or random orthonormal matrix.}
    \label{fig:universality}
\end{figure}
\Cref{thm:partial-isometry} leads to precise predictions for the two universality classes.
\Cref{fig:universality} illustrates these predictions on two separate problem instances: $\mat{A} = [\begin{matrix}\Id &\mat{0}\end{matrix}]^*$ (\emph{left}, ``coherent'') and $\mat{A}$ consisting of the first $d$ columns of the discrete cosine transform (\emph{right}, ``incoherent'').
For each problem instance, we set the dimensions $n = 10^3$, $d = 10^1$, $p = 1$, and we choose $\mat{B} = \sum_{i=1}^n i\evec_i$.
We test eight random embeddings across a range of embedding dimensions $\ell \in \{d+2, \ldots, n-1\}$.
\begin{itemize}
    \item In the \textbf{\textit{Gaussian universality class}}, we test three embeddings with iid entries including Gaussian $\mathcal{N}_\real(0,1)$, random sign $\Unif\{\pm 1\}$, and random uniform $\Unif [-1,1]$ distributions.
    We also test SparseIID and SparseStack random matrices, which have an expected value of $\zeta$ nonzero entries per column; see \cite{CEMT25} for definitions.
    Following the universality experiments in \cite[App.~B]{Epp25a}, we set $\zeta = 16$ for the SparseIID construction and set $\zeta = 8$ for the more effective SparseStack construction.
    \item In the \textbf{\textit{random orthonormal matrix universality class}}, we test three embeddings with orthonormal columns including the random orthonormal matrix, the Givens embedding, and the subsampled randomized trigonometric transform (SRTT).
    For the Givens embedding, we apply a sequence of $\lceil 4n\log n\rceil$ uniformly random Givens rotations and subsample to $\ell$ uniformly selected coordinates.
    For the SRTT, we use the discrete cosine transform with rerandomization; see \cite[Def.~B.14]{Epp25a} for details.    
\end{itemize}
All embeddings use the field $\field = \real$, and we report the accuracy parameter 
\begin{equation*}
    \varepsilon = \frac{\expect \norm{\mat{B} - \mat{A}(\mat{\varOmega}^*\mat{A})^+ (\mat{\varOmega}^*\mat{B})}_{\rm F}^2}{\norm{\mat{B} - \mat{A} \mat{A}^+ \mat{B}}_{\rm F}^2} - 1.
\end{equation*}
For each problem instance, each choice of embedding, and each choice of $\ell$, we evaluate this expectation empirically over 1000 trials.

\Cref{fig:universality} shows that all embeddings closely match the predictions of  \cref{thm:partial-isometry} for either the Gaussian or random orthonormal universality class.
Several works are beginning to rigorously explain this universality phenomenon \cite{DL19a,Der23,CDDR24}.

\subsection{Sketch-and-solve: Previous work} \label{sec:sketch-solve-previous}
The standard analysis of sketch-and-solve proceeds in two steps.
First, one establishes with high probability that a random matrix $\mat{\varOmega}$ is a \emph{subspace embedding},
\begin{equation}
\label{eq:subspace_embedding}
    (1 - \eta) \norm{\vec{x}} \leq \norm{\mat{\varOmega}^* \vec{x}} \leq (1 + \eta) \norm{\vec{x}},
    \qquad \text{for all } \vec{x} \in \range([\begin{matrix} \mat{A} & \vec{b} \end{matrix}]).
\end{equation}
Then the subspace embedding property \cref{eq:subspace_embedding} leads to upper bounds for the sketch-and-solve error; see e.g., \cite[Prop.~5.3]{KT24} or \cite[Thm.~C.2]{Epp25a}.
Another version of this analysis \cite{DMMS11,CEMT25} requires a weaker subspace injection property for $\mat{\varOmega}$.
This pattern of analysis applies to many types of embeddings, and it predicts the correct asymptotic error scaling.
However, it is not quantitatively sharp.

The first sharp analysis of sketch-and-solve was provided by Bartan \& Pilanci \cite[Lem.~2.1]{BP20}, who established an exact mean-squared error formula for real Gaussian embeddings; also see the earlier work \cite{AAR21}.
We have restated this result as a special case of \cref{thm:partial-isometry}, and we have extended the result to complex Gaussians as well as real and complex random orthonormal embeddings.

Our approach differs from the statistical, asymptotic analysis of Dobriban \& Liu \cite{DL19a}, who considered a model where $\mat{X}$ is a planted solution and $\mat{B}$ is generated by adding white noise to $\mat{A}\mat{X}$.
Dobriban \& Liu analyzed the sketch-and-solve squared error in the asymptotic limit as $d,\ell,n\to\infty$ with convergent aspect ratios, and they observed differences between Gaussian and random orthonormal universality classes.
In the same vein, Lacotte, Liu, Dobriban, \& Pilanci \cite{LLDP20a} studied the differences between embeddings in the two universality classes as $d,\ell,n\to\infty$ with convergent aspect ratios, and they proved an asymptotic version of the expectation formula in \cref{prop:beta_exp}.
Our work has similar goals to \cite{DL19a,LLDP20a}, yet we emphasize non-asymptotic results leading to precise predictions for any fixed parameters $d,\ell,n$.
Because of this approach, our expectation formulas are tailored to Gaussian and random orthonormal embeddings, and we only offer empirical evidence of universality.

Finally, a couple of works have established sketch-and-solve lower bounds comparable to \cref{thm:sketch-solve-optimality}.
Sridhar, Pilanci, \& \"Ozg\"ur \cite{SPO20} derived a lower bound that applies to any estimator $\Xhat(\mat{\varOmega}^*\mat{A},\mat{\varOmega}^*\mat{B})$ based on real Gaussian embeddings (not just sketch-and-solve).
Later, Bartan \& Pilanci developed additional lower bounds for real Gaussian matrices and several structured embeddings \cite{BP23}.
In contrast, our work does not assume any structure on $\mat{\varOmega}$; we produce a uniform lower bound valid for any data-oblivious embedding.

\section{Results for the randomized SVD} \label{sec:rsvd-description}

The randomized SVD generates a low-rank approximation of any matrix $\mat{A} \in \field^{d \times n}$.
To do so, this algorithm applies a random embedding $\mat{\varOmega} \in \field^{n \times \ell}$ on the right and computes a matrix $\mat{Q}$ whose columns provide an orthonormal basis for $\range(\mat{A} \mat{\varOmega})$.
The algorithm then returns a factored rank-$\ell$ approximation
\begin{equation*}
    \Ahat \coloneqq \mat{Q}(\mat{Q}^*\mat{A}).
\end{equation*}
The approximation can be equivalently written as $\Ahat = \mat{\varPi}_{\mat{A} \mat{\varOmega}} \mat{A}$ where $\mat{\varPi}_{\mat{A} \mat{\varOmega}}$ is the orthogonal projection onto the range of $\mat{A} \mat{\varOmega}$.
Halko, Martinsson, \& Tropp \cite{HMT11} introduced the randomized SVD in its modern form; see \cite[sec.~3]{TW23a} for a discussion of earlier work and related algorithms.

\subsection{Randomized SVD: Theoretical analysis} \label{sec:rsvd-results}

The existing analysis of the randomized SVD compares the error of $\Ahat$ to the error of an optimal rank-$q$ approximation for $q < \ell - \alpha_\field$.
The optimal rank-$q$ approximation of $\mat{A}$ is denoted by $\lowrank{\mat{A}}_q$, and it is generated by any $q$-truncated singular value decomposition.
The result \cite[Thm.~10.5]{HMT11} is as follows.

\begin{theorem}[Randomized SVD: Existing analysis] \label{fact:randomized-svd}
    Let $\mat{\varOmega} \in \field^{n \times \ell}$ be a full-rank random embedding such that $\mat{\varOmega} \sim \mat{U}\mat{\varOmega}$ for each unitary $\mat{U}\in \field^{n \times n}$.
    For any matrix $\mat{A} \in \field^{d \times n}$, 
    \begin{equation*}
        \expect \norm{\mat{A} - \mat{\varPi}_{\mat{A} \mat{\varOmega}} \mat{A}}_{\rm F}^2 \le \min_{q < \ell - \alpha_\field} \left(1 + \frac{q}{\ell - q - \alpha_\field}\right) 
        \norm{\smash{\mat{A} - \lowrank{\mat{A}}_q}}_{\rm F}^2.
    \end{equation*}
\end{theorem}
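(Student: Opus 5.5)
The plan follows the classical Halko--Martinsson--Tropp argument, specialized to rotation-invariant embeddings. Fix $q < \ell - \alpha_\field$. It suffices to prove the bound for this single $q$ and then take the minimum. Write a full SVD $\mat{A} = \mat{U}\mat{\Sigma}\mat{V}^*$ and partition $\mat{V} = [\mat{V}_1 \ \mat{V}_2]$, where $\mat{V}_1$ holds the top $q$ right singular vectors. Split $\mat{\Sigma}$ conformally into $\mat{\Sigma}_1$ and $\mat{\Sigma}_2$. Set $\mat{\varOmega}_1 = \mat{V}_1^*\mat{\varOmega} \in \field^{q\times \ell}$ and $\mat{\varOmega}_2 = \mat{V}_2^*\mat{\varOmega}$. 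By rotation invariance, $\mat{V}^*\mat{\varOmega} \sim \mat{\varOmega}$, so we may take $\mat{V} = \Id$ and $\mat{\varOmega}_1,\mat{\varOmega}_2$ are simply row blocks of $\mat{\varOmega}$.

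\textbf{Step 1 (deterministic bound).} Assume $\mat{\varOmega}_1$ has full row rank, which holds almost surely (see Step 2). Use the candidate $\mat{Z} = \mat{A}\mat{\varOmega}\mat{\varOmega}_1^+\mat{\Sigma}_1^{-1}\cdots$, or more simply compare with the projection onto $\range(\mat{A}\mat{\varOmega}\mat{\varOmega}_1^+)\subseteq \range(\mat{A}\mat{\varOmega})$. Because projecting onto a larger subspace only reduces the error, and by the standard HMT computation, this gives
\begin{equation*}
\norm{\mat{A} - \mat{\varPi}_{\mat{A}\mat{\varOmega}}\mat{A}}_{\rm F}^2 \le \norm{\mat{\Sigma}_2}_{\rm F}^2 + \norm{\mat{\Sigma}_2\mat{\varOmega}_2\mat{\varOmega}_1^+}_{\rm F}^2.
\end{equation*}
Here $\norm{\mat{\Sigma}_2}_{\rm F}^2 = \norm{\mat{A}-\lowrank{\mat{A}}_q}_{\rm F}^2$. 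I will cite this as the standard deterministic bound \cite[Thm.~9.1]{HMT11} rather than rederive it.

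\textbf{Step 2 (reduce to Gaussian).} A full-rank rotation-invariant $\mat{\varOmega}$ can be written as $\mat{\varOmega} \sim \mat{G}\mat{R}$, where $\mat{G}$ is Gaussian and $\mat{R}$ is an invertible $\ell\times\ell$ matrix independent of $\mat{G}$. One way to see this is to take $\mat{\varOmega} = \mat{Q}\mat{T}$ with $\mat{Q}$ random orthonormal independent of $\mat{T}$, and to note that $\mat{Q}\sim \mat{G}(\mat{G}^*\mat{G})^{-1/2}$. The range of $\mat{A}\mat{\varOmega}$ is unchanged by right multiplication by an invertible matrix, so $\mat{\varPi}_{\mat{A}\mat{\varOmega}} = \mat{\varPi}_{\mat{A}\mat{G}}$. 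Hence the expected error coincides with that of a Gaussian embedding, and $\mat{G}_1$ has full row rank almost surely. I expect this reduction to be the main subtlety. If the factorization argument turns out to be delicate, I would instead observe directly that $\range(\mat{\varOmega})$ is a uniformly random $\ell$-dimensional subspace almost surely, which has the same law as $\range(\mat{G})$.

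\textbf{Step 3 (Gaussian expectation).} For Gaussian $\mat{G}$, the blocks $\mat{G}_2$ and $\mat{G}_1$ are independent. Conditioning on $\mat{G}_1$ gives $\expect\norm{\mat{\Sigma}_2\mat{G}_2\mat{G}_1^+}_{\rm F}^2 = \norm{\mat{\Sigma}_2}_{\rm F}^2\,\expect\norm{\mat{G}_1^+}_{\rm F}^2$. Since $\expect\norm{\mat{G}_1^+}_{\rm F}^2 = \expect\tr((\mat{G}_1\mat{G}_1^*)^{-1})$, the inverse-Wishart mean gives $q/(\ell-q-\alpha_\field)$ in both fields. In the real case the inverse-Wishart mean is $\Id/(\ell-q-1)$, and in the complex case it is $\Id/(\ell-q)$. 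Combining the three steps yields the factor $1 + q/(\ell-q-\alpha_\field)$, and minimizing over $q$ completes the proof.
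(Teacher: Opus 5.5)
Your argument is correct. It is essentially the Halko--Martinsson--Tropp proof that the paper cites for this statement without reproving it: the deterministic bound \cite[Thm.~9.1]{HMT11}, followed by the exact Gaussian expectation via the inverse-Wishart mean. You have extended it to arbitrary full-rank rotation-invariant embeddings and to $\field = \complex$.

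One claim in Step 2 is false as written: in general $\mat{R}$ cannot be independent of $\mat{G}$. For a random orthonormal $\mat{\varOmega}$, the identity $\mat{R}^*\mat{G}^*\mat{G}\mat{R} = \Id$ would make the nondegenerate Wishart matrix $\mat{G}^*\mat{G}$ a function of $\mat{R}$, hence independent of itself. Your own construction in fact produces $\mat{R} = (\mat{G}^*\mat{G})^{-1/2}\mat{T}$, which depends on $\mat{G}$. Independence is never used, however. All you need is that $\range(\mat{\varOmega})$ and $\range(\mat{G})$ have the same law, which your fallback remark supplies. Hence $\mat{\varPi}_{\mat{A}\mat{\varOmega}} \sim \mat{\varPi}_{\mat{A}\mat{G}}$, and the rest goes through.

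Within the paper, the statement also follows from the refined bound in \cref{thm:rsvd-upper-bound}: $(r-\ell)/(r-q) \le 1$ when $r \ge \ell$, and the error vanishes almost surely when $r \le \ell$. That refined result is proved by a genuinely different route:
\begin{enumerate}[label=(\roman*)]
\item pass to the Nystr\"om approximation of $\mat{A}^*\mat{A}$ through the Gram correspondence;
\item average over rotations and blockwise permutations of the spectrum;
\item use concavity and monotonicity of the Schur complement to dominate the error by that of the two-eigenvalue matrix $\mat{H}_{a,b}$;
\item evaluate the $a \to \infty$ limit using the inverse Beta mean.
\end{enumerate}
Your route is shorter and needs only the inverse-Wishart mean, but it cannot produce the factor $(r-\ell)/(r-q)$. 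On the two-eigenvalue instance, the expected value of the HMT bound is $(r-q)/(r-\ell)$ times the true limiting error. The Schur-complement argument recovers this factor and, at the same time, exhibits the hard instance behind the matching lower bound.
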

\noindent In this paper, we obtain a slight refinement. The proof is in \cref{sec:proof_rsvd}.
\begin{theorem}[Randomized SVD: Sharp upper bound] 
\label{thm:rsvd-upper-bound}
    Let $\mat{\varOmega} \in \field^{n \times \ell}$ be a full-rank random embedding such that $\mat{\varOmega} \sim \mat{U}\mat{\varOmega}$ for each unitary $\mat{U}\in \field^{n \times n}$.
    For any $\mat{A} \in \field^{d \times n}$ that satisfies $r \coloneqq \rank(\mat{A}) \geq \ell$,
    \begin{equation*}
        \expect \norm{\mat{A} - \mat{\varPi}_{\mat{A} \mat{\varOmega}} \mat{A}}_{\rm F}^2 \le \min_{q < \ell - \alpha_\field} 
        \frac{r-\ell}{r-q} \left(1 + \frac{q}{\ell - q - \alpha_\field}\right) 
        \norm{\smash{\mat{A} - \lowrank{\mat{A}}_q}}_{\rm F}^2.
    \end{equation*}
\end{theorem}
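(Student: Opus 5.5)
By rotation invariance, the range of a full-rank embedding $\mat{\varOmega}$ with $\mat{U}\mat{\varOmega}\sim\mat{\varOmega}$ is a uniformly distributed $\ell$-dimensional subspace of $\field^n$. The error $\norm{\mat{A}-\mat{\varPi}_{\mat{A}\mat{\varOmega}}\mat{A}}_{\rm F}^2$ depends on $\mat{\varOmega}$ only through $\range(\mat{\varOmega})$, so I may assume without loss of generality that $\mat{\varOmega}$ is a Gaussian embedding. Write $\mat{A}=\mat{U}\mat{\Sigma}\mat{V}^*$ with $\mat{\Sigma}=\diag(\sigma_1,\ldots,\sigma_r)$ and $\sigma_1\ge\cdots\ge\sigma_r>0$. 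Rotation invariance again reduces the problem to $\mat{A}=\mat{\Sigma}$ and an $r\times\ell$ Gaussian $\mat{G}$. The expected error then becomes a function
\begin{equation*}
E(\lambda_1,\ldots,\lambda_r) = \expect\bigl[\tr(\mat{\Sigma}^2) - \tr(\mat{\varPi}_{\mat{\Sigma}\mat{G}}\mat{\Sigma}^2)\bigr]
\end{equation*}
of the squared singular values $\lambda_i=\sigma_i^2$. This function is symmetric in its arguments and positively homogeneous of degree one. Fix $q<\ell-\alpha_\field$ and the tail mass $\tau=\sum_{i>q}\lambda_i=\norm{\mat{A}-\lowrank{\mat{A}}_q}_{\rm F}^2$. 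The goal is to show that $E\le \frac{r-\ell}{r-q}\bigl(1+\frac{q}{\ell-q-\alpha_\field}\bigr)\tau$.

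\textbf{Step 1: send the head to infinity.} I will show that $E$ is nondecreasing in each head variable $\lambda_1,\ldots,\lambda_q$. Enlarging a singular value tilts $\range(\mat{\Sigma}\mat{G})$ toward that coordinate, so the captured head energy grows while the missed tail energy cannot increase in expectation. I would prove this either by a direct derivative computation or by a coupling argument. It then suffices to bound the limit $\lambda_1,\ldots,\lambda_q\to\infty$. Partition $\mat{G}=\twobyone{\mat{G}_1}{\mat{G}_2}$ with $\mat{G}_1\in\field^{q\times\ell}$, and let $\mat{\Sigma}_2$ denote the tail block. In the limit the range contains the first $q$ coordinates, and the remaining $\ell-q$ directions are spanned by $\mat{\Sigma}_2\mat{G}_2\mat{P}$, where $\mat{P}$ is the projector onto $\ker(\mat{G}_1)$. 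Since $\mat{G}_2\mat{P}$ is Gaussian with $\ell-q$ effective columns, the limiting error is
\begin{equation*}
E_\infty(\lambda_{q+1},\ldots,\lambda_r) = \expect\norm{\mat{\Sigma}_2-\mat{\varPi}_{\mat{\Sigma}_2\mat{H}}\mat{\Sigma}_2}_{\rm F}^2, \qquad \mat{H}\in\field^{(r-q)\times(\ell-q)}\ \text{Gaussian}.
\end{equation*}
This identification alone only produces the randomized-SVD error of the tail block with $\ell-q$ samples, which lacks the factor $1+q/(\ell-q-\alpha_\field)$. So the limit must be done more carefully: one keeps the $\mat{\Sigma}_2\mat{G}_2\mat{G}_1^+$ contribution, as in the proof of \cref{fact:randomized-svd}, where the bound $\norm{\mat{\Sigma}_2}_{\rm F}^2+\norm{\mat{\Sigma}_2\mat{G}_2\mat{G}_1^+}_{\rm F}^2$ arises. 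Concretely, I will write the error as a quadratic form in $\mat{\Sigma}_2$ whose coefficient matrix depends only on $\mat{G}$ and on the head–tail geometry.

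\textbf{Step 2: flatten the tail.} Next I will show that the limiting error, viewed as a function of $(\lambda_{q+1},\ldots,\lambda_r)$ on the simplex $\sum_{i>q}\lambda_i=\tau$, is maximized at the flat point $\lambda_i=\tau/(r-q)$. The natural route is concavity, since $E$ is an infimum-type quantity (error of the best approximation from a random subspace). Concavity combined with permutation symmetry gives Schur-concavity, so the maximum sits at the barycenter.

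\textbf{Step 3: compute the flat case exactly.} For a flat tail, rotation invariance within the tail block lets me compute exactly, using the inverse-Wishart identity $\expect\tr\bigl((\mat{G}_1\mat{G}_1^*)^{-1}\bigr)=q/(\ell-q-\alpha_\field)$. The tail projection captures, in expectation, the fraction $(\ell-q)/(r-q)$ of the flat tail energy, which should leave the factor $(r-\ell)/(r-q)$. This multiplies the inflation $1+q/(\ell-q-\alpha_\field)$ from the $\mat{G}_2\mat{G}_1^+$ term, giving the claimed constant. Minimizing over $q$ finishes the proof.

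I expect the main obstacle to be Steps 1–2: proving monotonicity in the head variables and concavity in the tail variables. Projections onto $\range(\mat{\Sigma}\mat{G})$ depend nonlinearly on $\mat{\Sigma}$. I would first try to express $\tr(\mat{\varPi}_{\mat{\Sigma}\mat{G}}\mat{\Sigma}^2)$ as $\tr\bigl(\mat{G}^*\mat{\Sigma}^4\mat{G}(\mat{G}^*\mat{\Sigma}^2\mat{G})^{-1}\bigr)$ and exploit operator convexity and concavity of $\mat{M}\mapsto \mat{X}^*\mat{M}^{-1}\mat{X}$ in this representation.
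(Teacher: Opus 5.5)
Your outline is essentially the paper's proof. The paper works with the Nystr\"om form $\tr(\mat{H}/\mat{\varOmega})$, $\mat{H}=\mat{A}^*\mat{A}$, and proceeds in three steps: permutation symmetry plus concavity of $\mat{H}\mapsto\mat{H}/\mat{\varOmega}$ to flatten head and tail, then psd-monotonicity to send the head to infinity, then an exact two-eigenvalue computation via the inverse Beta expectation; you swap the first two steps and compute with Gaussians and inverse Wishart, which works equally well. The obstacles you flag are exactly those two standard facts, both immediate from $\tr(\mat{\varLambda}/\mat{G})=\min_{\mat{X}}\tr[(\Id-\mat{G}\mat{X})^*\mat{\varLambda}(\Id-\mat{G}\mat{X})]$, a minimum of psd-monotone linear functions of $\mat{\varLambda}=\mat{\Sigma}^2$ (your Step~1 heuristic is backwards, since the missed tail energy can only grow with the head), and in Step~3 the product form arises only because the $\mat{G}_2\mat{G}_1^+$ term enters projected onto the $(r-\ell)$-dimensional orthogonal complement of $\range(\mat{G}_2\mat{P})$, giving $b(r-\ell)\bigl(1+\expect\tr[(\mat{G}_1\mat{G}_1^*)^{-1}]\bigr)$ with $b=\tau/(r-q)$ rather than the unprojected HMT term.
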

\noindent \Cref{thm:rsvd-upper-bound} holds for any full-rank rotation-invariant random embedding such as a Gaussian or random orthonormal matrix.
The improvement over the existing analysis is modest, but the bound is tight.
The following matching lower bound is proved in \cref{sec:proof_rsvd_lower}.

\begin{theorem}[Randomized SVD: Sharp lower bound] 
\label{thm:rsvd-lower-bound}
    Let $\mat{\varOmega} \in \field^{n \times \ell}$ be any full-rank random embedding.
    For any problem dimension $d \in \mathbb{N}$ and any rank parameters $q,r \in \mathbb{N}$ with $q + \alpha_{\field} < \ell$ and $\ell \leq r \leq \min\{d,n\}$,
    \begin{equation}
    \label{eq:rsvd-lower-bound}
        \sup_{\substack{\mat{A} \in \field^{d \times n} \\ \rank(\mat{A}) = r}} \frac{\expect \norm{\mat{A} - \mat{\varPi}_{\mat{A} \mat{\varOmega}} \mat{A}}_{\rm F}^2}{\norm{\smash{\mat{A} - \lowrank{\mat{A}}_q}}_{\rm F}^2} \ge \frac{r -\ell}{r - q} \left(1 + \frac{q}{\ell - q - \alpha_\field}\right).
    \end{equation}
\end{theorem}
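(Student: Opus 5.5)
The plan is to reduce the supremum over $\mat{A}$ to an average over randomly rotated inputs, which converts an arbitrary full-rank embedding into a rotation-invariant one. Then I would compute the error of one explicit hard instance in a limit and show that it saturates the bound.

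\emph{Step 1 (symmetrization).} Fix a matrix $\mat{A}_0 \in \field^{d\times n}$ of rank $r$ and let $\mat{V}\in\field^{n\times n}$ be Haar-distributed unitary, independent of $\mat{\varOmega}$. The supremum in \cref{eq:rsvd-lower-bound} is at least the average of the ratio over inputs $\mat{A}_0\mat{V}^*$. For each such input, $\|\mat{A}_0\mat{V}^*\|$, $\lowrank{\cdot}_q$ and the rank are unchanged. Moreover,
\[
\norm{\mat{A}_0\mat{V}^* - \mat{\varPi}_{\mat{A}_0\mat{V}^*\mat{\varOmega}}\mat{A}_0\mat{V}^*}_{\rm F} = \norm{\mat{A}_0 - \mat{\varPi}_{\mat{A}_0(\mat{V}^*\mat{\varOmega})}\mat{A}_0}_{\rm F}.
\]
The projector depends on $\mat{\varOmega}$ only through $\range(\mat{\varOmega})$. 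Since $\mat{\varOmega}$ has full rank, $\range(\mat{V}^*\mat{\varOmega})$ is a uniformly random $\ell$-dimensional subspace. Consequently the averaged error equals the error for $\mat{A}_0$ with a random orthonormal embedding, or equivalently with a Gaussian embedding $\mat{G}\in\field^{n\times\ell}$. Only the $r\times\ell$ block of $\mat{G}$ acting on the row space of $\mat{A}_0$ matters. It therefore suffices to exhibit, for every $\eta>0$, an $\mat{A}_0$ for which the Gaussian-embedding ratio is at least the right-hand side of \cref{eq:rsvd-lower-bound} minus $\eta$.

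\emph{Step 2 (hard instance).} I would take $\mat{A}_0$ with singular values $M$ (repeated $q$ times) and $1$ (repeated $r-q$ times), so that $\norm{\mat{A}_0-\lowrank{\mat{A}_0}_q}_{\rm F}^2 = r-q$. After rotating, the problem becomes $\mat{A}_0=\diag(M\Id_q,\Id_{r-q})$ with a Gaussian sketch $\mat{G}=\begin{bmatrix}\mat{G}_1\\ \mat{G}_2\end{bmatrix}$, where $\mat{G}_1\in\field^{q\times\ell}$ and $\mat{G}_2\in\field^{(r-q)\times\ell}$. Let $\mat{N}$ be an orthonormal basis of $\ker\mat{G}_1$, which has dimension $\ell-q$. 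Then $\range(\mat{A}_0\mat{G})$ contains $\range\bigl(\begin{bmatrix}\mat{0}\\ \mat{G}_2\mat{N}\end{bmatrix}\bigr)$ together with $q$ vectors of the form $\begin{bmatrix} M\mat{G}_1\mat{G}_1^+\\ \mat{G}_2\mat{G}_1^+\end{bmatrix}$.

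\emph{Step 3 (limit $M\to\infty$).} The error splits into two contributions.
\begin{itemize}
\item The tail block contributes the residual of projecting $\Id_{r-q}$ onto the $(\ell-q)$-dimensional range of $\mat{G}_2\mat{N}$, which is $(r-q)-(\ell-q)=r-\ell$. Note that $\mat{G}_2\mat{N}$ is Gaussian and independent of $\mat{G}_1$.
\item The head block contributes an $O(1)$ term, not a vanishing one. The captured vectors tilt toward the tail directions by $\mat{G}_2\mat{G}_1^+/M$, and this costs $M^2\cdot O(M^{-2})$. I expect the limit to be $\expect\norm{\mat{P}^\perp\mat{G}_2\mat{G}_1^+}_{\rm F}^2$, where $\mat{P}^\perp$ projects onto the complement of $\range(\mat{G}_2\mat{N})$.
\end{itemize}
Conditioning on $\mat{G}_1$, the matrix $\mat{G}_2\mat{G}_1^+$ is Gaussian with row covariance $(\mat{G}_1\mat{G}_1^*)^{-1}$, and it is independent of $\mat{G}_2\mat{N}$ because $\mat{G}_1^+$ and $\mat{N}$ have orthogonal ranges. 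Hence the head contribution is
\[
(r-\ell)\,\expect\tr\bigl[(\mat{G}_1\mat{G}_1^*)^{-1}\bigr] = (r-\ell)\,\frac{q}{\ell-q-\alpha_\field},
\]
using the inverse-Wishart mean. Summing the two contributions and dividing by $r-q$ gives exactly $\frac{r-\ell}{r-q}\bigl(1+\frac{q}{\ell-q-\alpha_\field}\bigr)$.

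\emph{Main obstacle.} The main obstacle is making Step 3 rigorous. I would need to justify the limit of the expectation, via dominated convergence or an exact $M$-dependent formula that is monotone in $M$. I would also need to check carefully that the perturbed head vectors produce exactly the $O(M^{-2})\cdot M^2$ correction claimed, with no cross terms. To handle this I would write $\mat{\varPi}_{\mat{A}_0\mat{G}}$ explicitly using the block basis above and expand $\norm{(\Id-\mat{\varPi})\mat{A}_0}_{\rm F}^2$ directly.
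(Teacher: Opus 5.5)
Your proposal is correct. Its skeleton matches the paper's: a Haar rotation with the probabilistic method to make the embedding rotation-invariant, the same two-level hard instance (your $M^2$ is the paper's $a$, with $b=1$), and a limit $M\to\infty$ followed by division by $r-q$.

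Where you genuinely differ is in how the limit is evaluated. The paper works in Nystr\"om/Schur-complement form via the Gram correspondence. It orthonormalizes the sketch and takes a QR factorization $\mat{\varPsi}^* = \mat{Q}\mat{R}$, where $\mat{R}$ has orthonormal rows. This reduces the limiting error to $\tr\bigl((\mat{R}_1\mat{R}_1^*)^{-1}\bigr) + (r-\ell-q)$, which it evaluates with the inverse Beta formula (\cref{prop:beta_exp}), since $\mat{R}_1^*\mat{R}_1 = \mat{\varPsi}_1\mat{\varPsi}_1^* \sim \Beta_\field(q,\ell,r)$.

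You instead pass to a Gaussian sketch, which is legitimate because the error depends only on $\range(\mat{V}^*\mat{\varOmega})$. You split $\field^\ell$ into $\range(\mat{G}_1^*)$ and $\ker\mat{G}_1$, then use the conditional independence of $\mat{G}_2\mat{G}_1^+$ and $\mat{G}_2\mat{N}$ together with the inverse-Wishart mean. This yields the decomposition $(r-\ell) + (r-\ell)\,q/(\ell-q-\alpha_\field)$. It shows clearly that the gain over the HMT bound comes from $\mat{P}^\perp$ having rank $r-\ell$ rather than $r-q$, and from $\ell-q$ tail directions being captured exactly.

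Your stated obstacle closes easily. Write $\mat{C} = \mat{G}_2\mat{G}_1^+$ and $\mat{D} = \mat{G}_2\mat{N}$. Right-multiplying the basis $\bigl[\begin{smallmatrix} M\Id & \mat{0} \\ \mat{C} & \mat{D}\end{smallmatrix}\bigr]$ by $\bigl[\begin{smallmatrix}\Id & \mat{0}\\ -\mat{D}^+\mat{C} & \Id\end{smallmatrix}\bigr]$ replaces $\mat{C}$ by $\mat{E} = \mat{P}^\perp\mat{C}$ and makes the two column blocks orthogonal. A direct trace computation then gives the exact identity
\[
\norm{\mat{A}_0 - \mat{\varPi}_{\mat{A}_0\mat{G}}\mat{A}_0}_{\rm F}^2 = (r-\ell) + \sum_i \frac{(M^2-1)\,e_i}{M^2+e_i},
\]
where the $e_i$ are the eigenvalues of $\mat{E}^*\mat{E}$. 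Each summand increases in $M$ to $e_i$, so monotone convergence gives $(r-\ell) + \expect\norm{\mat{P}^\perp\mat{C}}_{\rm F}^2$, with no cross terms.

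As for trade-offs, your route is more elementary and self-contained: it needs no Beta matrices or QR factorization. The paper's \cref{lem:two-eigenvalues}, by contrast, is stated for an arbitrary full-rank rotation-invariant embedding. It is reused verbatim for the matching upper bound via concavity and monotonicity of the Schur complement, so one computation serves both directions.
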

\noindent 
For each random embedding $\mat{\varOmega}$, \cref{thm:rsvd-lower-bound} shows that there must be a target matrix $\mat{A}$ that drives the error to the level \cref{eq:rsvd-lower-bound} or higher.
Therefore, rotation-invariant embeddings are minimax optimal for the randomized SVD, and \cref{thm:rsvd-upper-bound} is sharp.

\subsection{Randomized SVD: Empirical evidence of universality}\label{sec:rsvd-discussion}

\begin{figure}[t]
    \centering
    \includegraphics[width=0.99\linewidth]{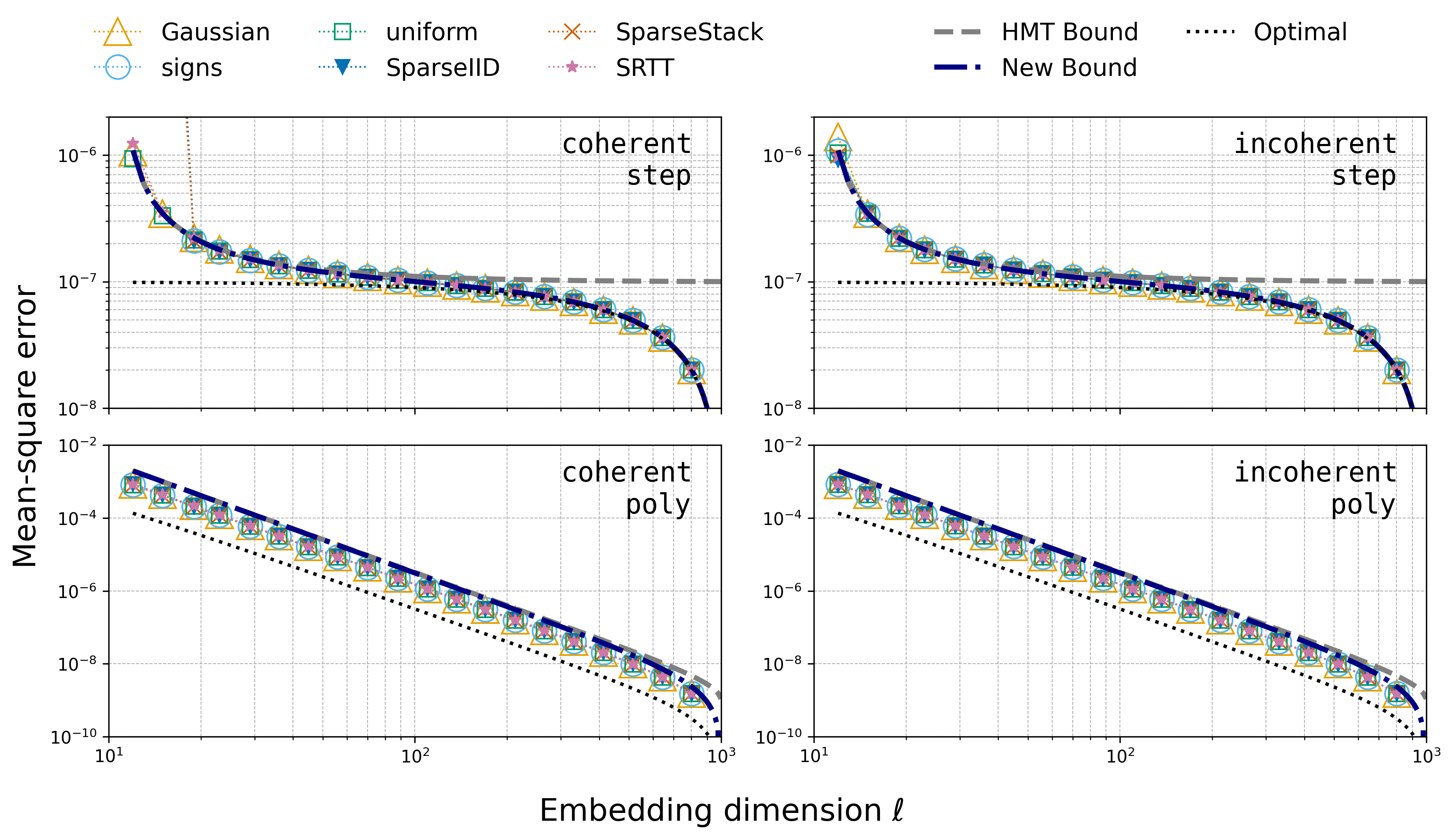}
    \caption{Mean-squared error $\expect \norm{\mat{A} - \Ahat}_{\rm F}^2$ as a function of embedding dimension $\ell$ for six random embeddings.
    The error of the optimal rank-$\ell$ approximation is shown for reference.
    For the ``step'' problems, all embeddings follow the new bound (\cref{thm:rsvd-upper-bound}), except for sign-based embeddings at small values of $\ell$.
    For the ``poly'' problems, the bound predicts the mean-squared error up to a factor of 2.5$\times$.}
    \label{fig:universality-rsvd}
\end{figure}

Many common embeddings with a large embedding dimension can be used interchangeably in the randomized SVD, leading to nearly the same performance.
\Cref{fig:universality-rsvd} demonstrates this universality phenomenon on four problem instances involving a positive-semidefinite (psd) matrix $\mat{A} = \mat{Q}\mat{\varLambda}\mat{Q}^*$.
The target matrix has two possible sets of eigenvectors: $\mat{Q} = \Id_n$ (\emph{left}, ``coherent'') and $\mat{Q}$ equal to the discrete cosine transform (\emph{right}, ``incoherent'').
Additionally, the matrix has two possible sets of eigenvalues:
\begin{align*}
    \mat{\varLambda} &= \twobytwo{\Id_{10}}{\mat{0}}{\mat{0}}{10^{-5}\Id_{n-10}} &&
    \text{(\emph{top}, ``step'')}, \\
    \mat{\varLambda} &= \diag(i^{-2} : i=1,\ldots,n) && \text{(\emph{bottom}, ``poly'')}.
\end{align*}
We set the matrix dimensions $n\coloneqq 10^3$ and test six of the random embeddings from \cref{sec:sketch-solve-discussion}.
We omit the random orthonormal matrix, since it performs identically to a Gaussian embedding, as well as the Givens embedding, since generating this matrix is slow.
We evaluate the expectation $\expect \norm{\mat{A} - \Ahat}_{\rm F}^2$ empirically over $1000$ trials.

The empirical results in \cref{fig:universality-rsvd} confirm that all embeddings with a high embedding dimension $\ell \geq 20$ perform similarly to one another.
However, contrary to the prediction of universality, the sign, SparseStack, and SparseIID embeddings with a small embedding dimension $\ell < 20$ lead to high error in the ``step/coherent'' problem.
These embeddings take discrete $\{-1,0,+1\}$ values, so they can annihilate a vector in the span of the dominant eigenvectors.
Yet, the probability of annihilation vanishes as $\ell$ is increased, and universal behavior is restored in these experiments for $\ell \ge 20$.
Last, in the ``poly'' problems, all the random embeddings perform similarly to one another, presenting more evidence of universality.

The new bound (\cref{thm:rsvd-upper-bound}) is much sharper than the traditional HMT bound (\cref{fact:randomized-svd}) in the ``step'' problems when $\ell \geq n / 10$, and it nearly matches the actual error.
However, in the ``poly'' problems, the new bound overestimates the error of all the embeddings by a factor of roughly 2.5$\times$.
This overestimation factor is not well understood theoretically, as it depends on the detailed distribution of the eigenvalues.

\subsection{Randomized SVD: Previous work}\label{sec:rsvd-previous}

The standard analysis of the randomized SVD is due to Halko, Martinsson, \& Tropp \cite{HMT11}.
Tropp \& Webber \cite{TW23a} developed additional upper bounds.
Yet none of these bounds is as sharp as \cref{thm:rsvd-upper-bound}.

In 2014, Witten \& Cand\`es \cite[Thms.~1.2 \& 3.1]{witten2014randomized} pursued an alternative sharp analysis of the spectral norm error of the randomized SVD.
They considered a full-rank, rotation-invariant embedding $\mat{\varOmega} \in \real^{n\times \ell}$, a dimension $d \in \mathbb{N}$, and a rank parameter $q \in \mathbb{N}$ with $q < \ell - 1$.
Then they constructed a random matrix $\mat{X} \in \real^{(n-q) \times n}$ so the output of the randomized SVD satisfies
\begin{equation*}
    \expect \norm{\smash{\mat{A} - \mat{\varPi}_{\mat{A} \mat{\varOmega}} \mat{A}}} \le \expect \norm{\mat{X}} \, 
    \norm{\smash{\mat{A} - \lowrank{\mat{A}}_q}},
    \qquad \text{for any matrix } \mat{A} \in \real^{d \times n}.
\end{equation*}
Additionally, they proved a matching lower bound.
For any random embedding $\mat{\varOmega} \in \real^{n\times \ell}$,
\begin{equation*}
    \sup_{\substack{\mat{A} \in \field^{d \times n} \\ \mat{A}\neq\lowrank{\mat{A}}_q}} \frac{\expect \norm{\mat{A} - \Ahat}}{\norm{\smash{\mat{A} - \lowrank{\mat{A}}_q}}} \geq \expect \norm{\mat{X}}.
\end{equation*}
Because they worked in the spectral norm, Witten \& Cand\`es gave a construction for the matrix $\mat{X}$, but they were unable to obtain sharp estimates for $\expect \norm{\mat{X}}$.
By contrast, we work in the squared Frobenius norm and obtain sharp error bounds that are rational functions of $r=\rank(\mat{A})$, $\ell$, and $q$.
Our analysis uses a similar technique to Witten \& Cand\`es and considers a worst-case matrix $\mat{A}$ with two distinct eigenvalues, the larger of which is increased to infinity; see \cref{sec:rsvd-nystom-analysis} for details.

\subsection{Connection to Nystr\"om approximation}

The randomized SVD is algebraically connected to the Nystr\"om approximation \cite{WS00,DM05,HMT11,GM13,TYUC17b}, which is a widely used low-rank approximation format for positive-semidefinite (psd) matrices \cite[Lem.~5.1]{TW23a}.

\begin{definition}[Nystr\"om approximation]
    Given an embedding $\mat{\varOmega} \in \field^{n \times \ell}$ and a psd matrix $\mat{H} \in \field^{n\times n}$, the Nystr\"om approximation is 
    \begin{equation*}
        \mat{H} \langle \mat{\varOmega} \rangle = \mat{H}\mat{\varOmega} (\mat{\varOmega}^*\mat{H}\mat{\varOmega})^+ \mat{\varOmega}^*\mat{H}.
    \end{equation*}
\end{definition}

\noindent The Nystr\"om approximation and randomized SVD are related by the \emph{Gram correspondence} \cite{Git11}; see \cite[sec.~2.6]{Epp25a} for history and discussion.

\begin{fact}[Gram correspondence] \label{fact:gram-correspondence}
    Given a random embedding $\mat{\varOmega} \in \field^{n \times \ell}$ and a target matrix $\mat{A} \in \field^{d \times n}$, let $\Ahat \coloneqq (\mat{A} \mat{\varOmega}) (\mat{A} \mat{\varOmega})^+ \mat{A}$ be the randomized SVD approximation of $\mat{A}$ and let $\Hhat \coloneqq \mat{H}\langle \mat{\varOmega} \rangle$ be the Nystr\"om approximation of the Gram matrix $\mat{H} \coloneqq \mat{A}^* \mat{A}$.
    Then
    \begin{equation*}
        \Hhat = \Ahat^* \Ahat \quad \text{and} \quad \tr(\mat{H} - \Hhat) = \norm{\mat{A} - \Ahat}_{\rm F}^2.
    \end{equation*}
\end{fact}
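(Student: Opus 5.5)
The plan is a direct algebraic verification. Neither identity uses randomness, so I would prove both for an arbitrary fixed $\mat{\varOmega}$ and $\mat{A}$. Set $\mat{Y} \coloneqq \mat{A}\mat{\varOmega} \in \field^{d\times \ell}$. Then $\mat{\varPi}_{\mat{Y}} = \mat{Y}\mat{Y}^+$ is the orthogonal projector onto $\range(\mat{A}\mat{\varOmega})$, and $\Ahat = \mat{\varPi}_{\mat{Y}}\mat{A}$.

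For the first identity, I would expand
\begin{equation*}
    \Ahat^*\Ahat = \mat{A}^*\mat{\varPi}_{\mat{Y}}^*\mat{\varPi}_{\mat{Y}}\mat{A} = \mat{A}^*\mat{\varPi}_{\mat{Y}}\mat{A},
\end{equation*}
using that an orthogonal projector is self-adjoint and idempotent. The key step is to rewrite the projector as
\begin{equation*}
    \mat{\varPi}_{\mat{Y}} = \mat{Y}(\mat{Y}^*\mat{Y})^+\mat{Y}^*,
\end{equation*}
which holds for every matrix $\mat{Y}$, including rank-deficient ones. To justify it, I would apply the standard pseudoinverse identity $\mat{Y}^+ = (\mat{Y}^*\mat{Y})^+\mat{Y}^*$, which can be checked from a thin SVD $\mat{Y} = \mat{U}\mat{\Sigma}\mat{V}^*$: both sides equal $\mat{V}\mat{\Sigma}^{+}\mat{U}^*$. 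Substituting $\mat{Y} = \mat{A}\mat{\varOmega}$ and $\mat{Y}^*\mat{Y} = \mat{\varOmega}^*\mat{A}^*\mat{A}\mat{\varOmega} = \mat{\varOmega}^*\mat{H}\mat{\varOmega}$ gives
\begin{equation*}
    \mat{A}^*\mat{\varPi}_{\mat{Y}}\mat{A} = \mat{H}\mat{\varOmega}(\mat{\varOmega}^*\mat{H}\mat{\varOmega})^+\mat{\varOmega}^*\mat{H} = \mat{H}\langle\mat{\varOmega}\rangle = \Hhat.
\end{equation*}

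For the trace identity, I would write $\mat{A} - \Ahat = (\Id - \mat{\varPi}_{\mat{Y}})\mat{A}$. Since $\Id - \mat{\varPi}_{\mat{Y}}$ is also a self-adjoint idempotent,
\begin{equation*}
    \norm{\mat{A} - \Ahat}_{\rm F}^2 = \tr\bigl(\mat{A}^*(\Id - \mat{\varPi}_{\mat{Y}})\mat{A}\bigr) = \tr(\mat{A}^*\mat{A}) - \tr(\mat{A}^*\mat{\varPi}_{\mat{Y}}\mat{A}).
\end{equation*}
By the first identity this equals $\tr(\mat{H}) - \tr(\Hhat) = \tr(\mat{H} - \Hhat)$.

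The only point that needs care is the pseudoinverse identity for rank-deficient $\mat{Y}$. This matters because $\mat{\varOmega}^*\mat{H}\mat{\varOmega}$ may be singular, for example when $\rank(\mat{A}) < \ell$. The SVD argument above handles this case, so I expect no real obstacle; everything else is routine algebra.
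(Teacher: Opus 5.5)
Your proof is correct and complete. The projector identity $\mat{Y}\mat{Y}^+ = \mat{Y}(\mat{Y}^*\mat{Y})^+\mat{Y}^*$ holds even when $\mat{\varOmega}^*\mat{H}\mat{\varOmega}$ is singular, and it turns $\Ahat^*\Ahat = \mat{A}^*\mat{\varPi}_{\mat{A}\mat{\varOmega}}\mat{A}$ into $\mat{H}\langle\mat{\varOmega}\rangle$; the trace identity then follows from idempotence of $\Id - \mat{\varPi}_{\mat{A}\mat{\varOmega}}$. The paper gives no proof of this statement (it is stated as a fact with citations to the literature), so there is nothing to compare against. Your deterministic projector computation is the standard argument, and it uses the same formula $\mat{\varPi}_{\mat{M}} = \mat{M}(\mat{M}^*\mat{M})^+\mat{M}^*$ that the paper itself invokes in its ``weighting hurts'' lemma.
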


\Cref{fact:gram-correspondence} shows that the randomized Nystr\"om and randomized SVD are algebraically linked, so error bounds extend from one method to the other.
This means that \cref{thm:rsvd-upper-bound,thm:rsvd-lower-bound} are equivalent to the following result for Nystr\"om approximation.

\begin{theorem}[Nystr\"om approximation: Sharp upper and lower bounds] \label{thm:nystrom}
    Let $\mat{\varOmega} \in \field^{n \times \ell}$ be any full-rank random embedding.
    Then $\mat{\varOmega}$ satisfies the following sharp upper and lower bounds.
    \begin{enumerate}[label=(\alph*)]
        \item \label{item:nystrom-result-upper} \textbf{Sharp upper bound.} 
        Assume $\mat{\varOmega} \sim \mat{U} \mat{\varOmega}$ for each unitary $\mat{U}\in \field^{n \times n}$.
        Then for any psd matrix $\mat{H} \in \field^{n \times n}$ with $r = \rank(\mat{H}) \geq \ell$,
        \begin{equation*} 
            \expect \tr(\mat{H} - \mat{H} \langle \mat{\varOmega} \rangle) \le \min_{q < \ell - \alpha_\field} \frac{r-\ell}{r-q} \left(1 + \frac{q}{\ell - q - \alpha_\field}\right) \tr(\mat{H} - \lowrank{\mat{H}}_q).
        \end{equation*}
        \item \label{item:nystrom-result-lower} \textbf{Sharp lower bound.} 
        For any rank parameters $q,r \in \mathbb{N}$ with $q + \alpha_\field < \ell \leq r$ and $r \leq n$,
        \begin{equation*}
            \sup_{\substack{\mat{H} \in \field^{n\times n} \text{ psd} \\ \rank(\mat{H}) = r}} \frac{\expect \tr(\mat{H} - \mat{H}\langle \mat{\varOmega} \rangle)}{\tr(\mat{H} - \lowrank{\mat{H}}_q)} 
            \ge \frac{r-\ell}{r-q}\, \left(1 + \frac{q}{\ell - q - \alpha_\field}\right).
        \end{equation*}
    \end{enumerate}
    
\end{theorem}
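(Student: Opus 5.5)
The plan is to deduce \cref{thm:nystrom} directly from \cref{thm:rsvd-upper-bound,thm:rsvd-lower-bound} using the Gram correspondence (\cref{fact:gram-correspondence}), handling parts \ref{item:nystrom-result-upper} and \ref{item:nystrom-result-lower} separately. Given a psd $\mat{H}\in\field^{n\times n}$ of rank $r$, I would write $\mat{H} = \mat{A}^*\mat{A}$ with $\mat{A} \coloneqq \mat{H}^{1/2}\in\field^{n\times n}$, so that $d = n$ and $\rank(\mat{A}) = r$. Conversely, any $\mat{A}\in\field^{d\times n}$ of rank $r$ yields a psd Gram matrix $\mat{H}=\mat{A}^*\mat{A}$ of rank $r$.

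\textbf{Translating both sides of the ratio.} Under this dictionary, \cref{fact:gram-correspondence} gives pointwise in $\mat{\varOmega}$
\begin{equation*}
\tr(\mat{H}-\mat{H}\langle\mat{\varOmega}\rangle)=\norm{\mat{A}-\mat{\varPi}_{\mat{A}\mat{\varOmega}}\mat{A}}_{\rm F}^2 ,
\end{equation*}
so the expectations agree. For the denominators, the singular values of $\mat{A}$ are $\sigma_i$ and the eigenvalues of $\mat{H}$ are $\sigma_i^2$. Hence $\tr(\mat{H}-\lowrank{\mat{H}}_q)=\sum_{i>q}\sigma_i^2=\norm{\mat{A}-\lowrank{\mat{A}}_q}_{\rm F}^2$ by the Eckart--Young characterization of the truncated SVD. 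Thus the Nystr\"om ratio for $\mat{H}$ coincides exactly with the randomized SVD ratio for $\mat{A}$.

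\textbf{Part \ref{item:nystrom-result-upper}.} Apply \cref{thm:rsvd-upper-bound} with $\mat{A}=\mat{H}^{1/2}$. The rotation-invariance and full-rank hypotheses are identical, and $\rank(\mat{A})=r\ge\ell$. The translation above then turns its conclusion verbatim into the claimed bound, including the minimum over $q<\ell-\alpha_\field$.

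\textbf{Part \ref{item:nystrom-result-lower}.} Apply \cref{thm:rsvd-lower-bound} with $d=n$, so the hypothesis $\ell\le r\le\min\{d,n\}=n$ matches the stated $r\le n$. For any $\varepsilon>0$ it supplies some $\mat{A}\in\field^{n\times n}$ of rank $r$ whose ratio exceeds the bound minus $\varepsilon$. Then $\mat{H}=\mat{A}^*\mat{A}$ is psd of rank $r$ with the same ratio, and taking the supremum gives the claim.

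\textbf{Degenerate denominators.} The only delicate point, and the step I expect to need care, is a vanishing denominator. This happens when $\tr(\mat{H}-\lowrank{\mat{H}}_q)=0$, equivalently $\norm{\mat{A}-\lowrank{\mat{A}}_q}_{\rm F}=0$. Since $r\ge\ell>q$, we have $\rank(\mat{A})>q$, so neither denominator can vanish. The ratios are therefore well defined on both sides, and the correspondence is a bijection on the relevant classes. Beyond this check the argument is a direct transcription; all substantive work lives in \cref{thm:rsvd-upper-bound,thm:rsvd-lower-bound}.
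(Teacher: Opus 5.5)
\paragraph{The reduction is circular.} Your Gram-correspondence dictionary is correct. With $\mat{A}=\mat{H}^{1/2}$ the numerators match pointwise, $\tr(\mat{H}-\lowrank{\mat{H}}_q)=\norm{\mat{A}-\lowrank{\mat{A}}_q}_{\rm F}^2$, and $r>q$ keeps the denominators positive. However, it only reproduces the equivalence the paper states just before \cref{thm:nystrom}, so it proves nothing. In the paper, \cref{thm:rsvd-upper-bound,thm:rsvd-lower-bound} have no proof of their own. They are derived \emph{from} \cref{thm:nystrom} by running \cref{fact:gram-correspondence} in the opposite direction (see the closing lines of \cref{sec:proof_rsvd,sec:proof_rsvd_lower}). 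Invoking them here is therefore circular, and your remark that ``all substantive work lives in'' those theorems is exactly where the gap is. You also cannot fall back on \cref{fact:randomized-svd}, the one randomized SVD bound with an external proof. It is an upper bound lacking the factor $(r-\ell)/(r-q)$, so it gives neither part.

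\paragraph{What must be supplied.} You need a direct analysis of the Schur complement $\mat{H}/\mat{\varOmega}=\mat{H}-\mat{H}\langle\mat{\varOmega}\rangle$. The engine is \cref{lem:two-eigenvalues}: for $\mat{H}_{a,b}=\diag(a\Id_q,b\Id_{r-q},\mat{0})$ and a full-rank rotation-invariant $\mat{\varOmega}$, $\lim_{a\to\infty}\expect\tr(\mat{H}_{a,b}/\mat{\varOmega})=b(r-\ell)\bigl(1+\tfrac{q}{\ell-q-\alpha_\field}\bigr)$. This is proved by restricting to the top $r$ rows, orthonormalizing, and taking a QR factorization. The limit $b\tr\bigl((\mat{R}_1\mat{R}_1^*)^{-1}\bigr)+b(r-\ell-q)$ is then recognized as an inverse moment of a $\Beta_\field(q,\ell,r)$ matrix (\cref{prop:beta_exp}).

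For part (a), diagonalize $\mat{H}$ and use $\mat{Q}^*\mat{\varOmega}\sim\mat{\varOmega}$. Then average over independent uniform permutations within the top-$q$ block and within the next $r-q$ coordinates. This leaves the expected error unchanged and has mean $\mat{H}_{a_0,b_0}$ with $b_0=\tr(\mat{H}-\lowrank{\mat{H}}_q)/(r-q)$. Concavity of $\mat{H}\mapsto\mat{H}/\mat{\varOmega}$ gives $\expect\tr(\mat{H}/\mat{\varOmega})\le\expect\tr(\mat{H}_{a_0,b_0}/\mat{\varOmega})$. Monotonicity then lets you send $a_0\to\infty$ and apply the lemma.

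For part (b), where $\mat{\varOmega}$ is arbitrary, the missing idea is to conjugate $\mat{H}_{a,1}$ by a Haar unitary $\mat{U}$ independent of $\mat{\varOmega}$. Since $\tr\bigl((\mat{U}\mat{H}_{a,1}\mat{U}^*)/\mat{\varOmega}\bigr)=\tr\bigl(\mat{H}_{a,1}/(\mat{U}^*\mat{\varOmega})\bigr)$ and $\mat{U}^*\mat{\varOmega}$ is full-rank and rotation-invariant, the lemma applies. The ratio tends to the claimed bound as $a\to\infty$, and an averaging argument yields a deterministic matrix at least as bad.

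Once either side of the equivalence has been proved directly this way, your transcription legitimately delivers the other.
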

\noindent We shall establish the upper and lower bounds for randomized SVD by first proving \cref{thm:nystrom} in \cref{sec:proof_rsvd,sec:proof_rsvd_lower}.

\section{Results for generalized Nystr\"om}
\label{sec:gen_nystrom}

The generalized Nystr{\"o}m approximation \cite{WLRT08,CW09,TYUC17,Nak20} is used in single-pass or streaming applications when the matrix $\mat{A}$ is not stored explicitly. 
It is also used when a cheap, less-accurate low-rank approximation is needed.
The user interacts with $\mat{A}$ only by sketching from the left and right, and these operations can be accelerated with structured random embeddings.
\begin{definition}[Generalized Nystr\"om approximation] \label{def:generalized}
Given random embeddings $\mat{\varOmega} \in \field^{n \times \ell}$ and $\mat{\varPsi} \in \field^{d \times k}$ and a matrix $\mat{A} \in \field^{d \times n}$, the generalized Nystr\"om approximation is 
\begin{equation*}
    \mat{A}\langle \mat{\varOmega},\mat{\varPsi}\rangle \coloneqq \mat{A}\mat{\varOmega}(\mat{\varPsi}^*\mat{A}\mat{\varOmega})^+ \mat{\varPsi}^*\mat{A}.
\end{equation*}
\end{definition}
\noindent The generalized Nystr\"om approximation typically uses uneven embedding dimensions, and we consider the case where the left embedding has more columns than the right embedding, i.e., $k\ge \ell$.
However, by considering the conjugate transpose, we obtain matching error bounds in the opposite case where $\ell \ge k$.

To analyze the generalized Nystr\"om approximation, we use the following observation \cite{CW09,TYUC17,CEMT25}.
The generalized Nystr\"om approximation can be realized by first generating a rank-$\ell$ randomized SVD approximation
\begin{equation*} 
    \Ahat \coloneqq (\mat{A}\mat{\varOmega})(\mat{A}\mat{\varOmega})^+ \mat{A},
\end{equation*}
and then approximating the action of the pseudoinverse with the sketch-and-solve method,
\begin{equation*}
    (\mat{A}\mat{\varOmega})^+ \mat{A} \approx (\mat{\varPsi}^*\mat{A}\mat{\varOmega})^+ \mat{\varPsi}^*\mat{A}.%
\end{equation*}
Therefore, the generalized Nystr\"om approximation combines the randomized SVD and sketch-and-solve approaches analyzed in the previous sections.

\subsection{Sharp error bounds for generalized Nystr\"om}

Tropp, Yurtsever, Udell, and Cevher \cite[Thm.~4.3]{TYUC17} derived explicit upper bounds on the error of the generalized Nystr\"om approximation for Gaussian embeddings; see also \cite[Thm.~4.7]{CW09}.
Here, we improve on these past works by obtaining sharp upper and lower bounds given the rank of $\mat{A}$ and dimensions of $\mat{\varOmega}$ and $\mat{\varPsi}$.

\begin{theorem}[Generalized Nystr\"om: Sharp upper and lower bounds] \label{thm:gen_nystrom}
    Let $\mat{\varOmega} \in \field^{n \times \ell}$ and $\mat{\varPsi}\in\field^{d \times k}$ be full-rank random embeddings with $\ell + \alpha_{\field} < k$.
    Then $\mat{\varOmega}$ and $\mat{\varPsi}$ satisfy the following error bounds.
    \begin{enumerate}[label=(\alph*)]
        \item \textbf{Sharp upper bound.}
        Assume $\mat{\varOmega} \sim \mat{U}\mat{\varOmega}$ for each unitary $\mat{U}\in \field^{n \times n}$ and that $\mat{\varPsi}$ is independent of $\mat{\varOmega}$ and is either Gaussian or random orthonormal.
        Then for any matrix $\mat{A} \in \field^{d \times n}$ with $r = \rank(\mat{A}) \geq k$,
        \begin{equation*}
            \expect \norm{\mat{A} - \mat{A}\langle\mat{\varOmega},\mat{\varPsi}\rangle}_{\rm F}^2 
            \le \left(1 + \gamma \, \frac{\ell}{k - \ell - \alpha_\field} \right) 
            \min_{q < \ell - \alpha_\field} \left[\frac{r-\ell}{r-q} \left( 1 + \frac{q}{\ell-q-\alpha_\field} \right) 
            \norm{\smash{\mat{A} - \lowrank{\mat{A}}_q}}_{\rm F}^2\right].
        \end{equation*}
        Here $\gamma = 1$ if $\mat{\varPsi}$ is a Gaussian embedding and $\gamma = (d - k) / (d - \ell)$ if $\mat{\varPsi}$ is a random orthonormal embedding.
        \item \label{item:gen-nystrom-result-lower} \textbf{Sharp lower bound.} For any rank parameters $q, r \in \mathbb{N}$ with $q + \alpha_\field < \ell$ and $k \leq r \leq \min\{d,n\}$,
        \begin{equation*}
            \sup_{\substack{\mat{A} \in \field^{d\times n} \\ \rank(\mat{A}) = r}} 
            \frac{\expect \norm{\mat{A} - \mat{A}\langle\mat{\varOmega},\mat{\varPsi}\rangle}_{\rm F}^2}
            {\norm{\smash{\mat{A} - \lowrank{\mat{A}}_q}}_{\rm F}^2} 
            \ge \left(1 + \frac{d-k}{d-\ell} \frac{\ell}{k - \ell - \alpha_\field} \right)
            \frac{r-\ell}{r-q} \left( 1 + \frac{q}{\ell-q-\alpha_\field} \right).
        \end{equation*}
    \end{enumerate}
\end{theorem}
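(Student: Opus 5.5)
For both parts we use the decomposition from the discussion after \cref{def:generalized}. Write $\mat{F} \coloneqq \mat{A}\mat{\varOmega} \in \field^{d\times \ell}$ and $\Ahat \coloneqq \mat{F}\mat{F}^+\mat{A}$. Then
\begin{equation*}
    \mat{A}\langle\mat{\varOmega},\mat{\varPsi}\rangle = \mat{F}(\mat{\varPsi}^*\mat{F})^+\mat{\varPsi}^*\mat{A},
\end{equation*}
which is exactly $\mat{F}$ times the sketch-and-solve solution of the least-squares problem with design matrix $\mat{F}$ and right-hand side $\mat{A}$. Its residual is therefore $\mat{A} - \mat{F}\Xhat$. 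Conditionally on $\mat{\varOmega}$, the matrix $\mat{F}$ has rank $\ell$ almost surely, since $r\ge k>\ell$, $\mat{\varOmega}$ is full rank and rotation invariant. The optimal least-squares residual is $\norm{\mat{A}-\Ahat}_{\rm F}^2$.

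\emph{Upper bound.} Condition on $\mat{\varOmega}$ and apply \cref{thm:partial-isometry} with rank $\ell < k-\alpha_\field$, ambient dimension $d$ and embedding $\mat{\varPsi}$. Independence of $\mat{\varPsi}$ and $\mat{\varOmega}$ makes this legitimate. It gives
\begin{equation*}
    \expect\bigl[\norm{\mat{A}-\mat{A}\langle\mat{\varOmega},\mat{\varPsi}\rangle}_{\rm F}^2 \,\big|\, \mat{\varOmega}\bigr] = \Bigl(1+\gamma\frac{\ell}{k-\ell-\alpha_\field}\Bigr)\norm{\mat{A}-\Ahat}_{\rm F}^2 .
\end{equation*}
Here $\gamma=1$ in the Gaussian case and $\gamma=(d-k)/(d-\ell)$ in the orthonormal case. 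Taking expectation over $\mat{\varOmega}$ and invoking \cref{thm:rsvd-upper-bound} finishes part (a). The only care needed is that the conditional factor is deterministic.

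\emph{Lower bound.} This is the main obstacle, because an adversary must make both stages bad simultaneously for arbitrary joint laws of $(\mat{\varOmega},\mat{\varPsi})$. I would build $\mat{A}$ by combining the hard instances from the proofs of \cref{thm:sketch-solve-optimality} and \cref{thm:rsvd-lower-bound}. The rSVD hard instance has $q$ large singular values (scaled by $t\to\infty$) and $r-q$ equal small ones, with random right singular vectors. After averaging, its output makes $\expect\norm{\mat{A}-\Ahat}_{\rm F}^2$ reach the rSVD factor times $\norm{\mat{A}-\lowrank{\mat{A}}_q}_{\rm F}^2$.

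On the left we randomize the left singular vectors by an independent Haar unitary $\mat{V}$ on $\field^d$, applied as $\mat{A}\mapsto \mat{V}\mat{A}$. Since the supremum dominates the average over such instances, we may average over $\mat{V}$. By Fubini, this replaces $\mat{\varPsi}$ with $\mat{V}^*\mat{\varPsi}$, which is rotation invariant. Conditionally on $\mat{\varOmega}$ and $\mat{\varPsi}$, the problem becomes a sketch-and-solve problem with a rotation-invariant sketch of a rank-$\ell$ design.

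We then need the averaged version of the argument in \cref{thm:sketch-solve-optimality}. Presumably that argument shows that, for any full-rank sketch, averaging over a uniformly random rotation of the problem gives error at least the random-orthonormal value, with the factor $\frac{d-k}{d-\ell}\frac{\ell}{k-\ell-\alpha_\field}$. This is because $\mat{\varPsi}$ enters only through $\range(\mat{\varPsi})$ and its conditioning, and orthonormalizing $\mat{\varPsi}$ can only help, via a convexity or Jensen-type step.

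The subtle point is that the residual $\mat{A}-\Ahat$ is not isotropic relative to $\range(\mat{F})$, whereas the sketch-and-solve lower bound used a worst-case right-hand side. I would handle this by noting that after the Haar rotation, $\range(\mat{F})$ and the residual columns are rotated jointly. I would then show that the expected extra error equals the factor times $\norm{\mat{A}-\Ahat}_{\rm F}^2$ exactly. The argument is that, for orthonormal $\mat{\varPsi}$, the quantity $\expect\norm{\mat{F}(\mat{\varPsi}^*\mat{F})^+\mat{\varPsi}^*\mat{R}}_{\rm F}^2$ with $\mat{R}\perp\range(\mat{F})$ depends only on $\norm{\mat{R}}_{\rm F}^2$, by the invariance under rotations fixing $\range(\mat{F})$. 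For general $\mat{\varPsi}$ the lower bound follows by the same reduction as in \cref{thm:sketch-solve-optimality}.

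Multiplying the two factors and letting $t\to\infty$ yields the product bound in part (b).
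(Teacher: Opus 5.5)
Your proposal is correct and follows the paper's argument. For (a) you condition on $\mat{\varOmega}$, apply \cref{thm:partial-isometry} to the rank-$\ell$ design $\mat{A}\mat{\varOmega}$ with right-hand side $\mat{A}$, and finish with \cref{thm:rsvd-upper-bound}. For (b) you rotate the two-level hard instance by independent Haar unitaries on both sides, apply the averaged argument of \cref{sec:proof_optimality} conditionally on $(\mat{\varOmega},\mat{\varPsi})$ and the right rotation, and then let $a\to\infty$ using \cref{sec:proof_rsvd_lower}. That averaged argument holds for an arbitrary fixed right-hand side, by exactly the isotropy argument of \cref{prop:partial-isom} you sketch. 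Two small corrections: the passage from a general $\mat{\varPsi}$ to an orthonormal one is not a Jensen-type step but the deterministic ``weighting hurts'' inequality (\cref{lem:weighting_hurts}), and in (b) you should record that $\rank(\tilde{\mat{\varPsi}}^*\mat{H}_a\tilde{\mat{\varOmega}}) = \ell$ almost surely so that \cref{lem:sketch-solve-formula} applies.
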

\begin{proof}
We can prove the generalized Nystr\"om approximation upper bound by combining error bounds for sketch-and-solve (\cref{thm:partial-isometry}) and the randomized SVD (\cref{thm:rsvd-upper-bound}).
\begin{align*}
    \expect \norm{\mat{A} - \mat{A}\langle\mat{\varOmega},\mat{\varPsi}\rangle}_{\rm F}^2 
    &= \left(1 + \gamma \,  \frac{\ell}{k - \ell - \alpha_\field} \right)\expect \norm{\smash{\mat{A} - \mat{A}\mat{\varOmega}(\mat{A}\mat{\varOmega})^+ \mat{A}}}_{\rm F}^2 \\ 
    &\le \left(1 + \gamma\, \frac{\ell}{k - \ell - \alpha_\field} \right) 
    \min_{q < \ell-\alpha_\field} \frac{r-\ell}{r-q} \left( 1 + \frac{q}{\ell-q-\alpha_\field} \right) 
    \norm{\smash{\mat{A} - \lowrank{\mat{A}}_q}}_{\rm F}^2.
\end{align*}
This completes the proof of the upper bound. The proof of the lower bound is more challenging, and it is given in \cref{sec:proof_gen_nystrom_lower}.
\end{proof}

\subsection{Implications}
We discuss two implications of this result.
First, we explore how these results suggest we should pick the two embedding dimensions $k$ and $\ell$.
Second, we compare the error bounds for generalized Nystr\"om and the randomized SVD.

\paragraph{How to pick $k$ and $\ell$?}
To discuss the error bounds in \cref{thm:gen_nystrom}, we consider the complex case and take the supremum over all matrices $\mat{A}$ of any dimension:
\begin{equation}
\label{eq:minimax}
    \sup_{
    \substack{\mat{A} \in \complex^{d \times n} \, d \geq k,\, n \geq \ell\\ \rank(\mat{A}) > q}}
    \frac{\expect \norm{\mat{A} - \mat{A}\langle\mat{\varOmega}_n,\mat{\varPsi}_d\rangle}_{\rm F}^2}{\norm{\smash{\mat{A} - \lowrank{\mat{A}}_q}}_{\rm F}^2}
    \geq \biggl(1 + \frac{\ell}{k-\ell}\biggr)\biggl(1 + \frac{q}{\ell-q}\biggr).
\end{equation}
Here, $\mat{\varOmega}_n$ and $\mat{\varPsi}_d$ denote some family of $n\times \ell$ and $d\times k$ random embeddings.
The bound \cref{eq:minimax} isolates three essential quantities: the rank $k$ of the left embedding, the rank $\ell$ of the right embedding, and the target rank $q$ for the generalized Nystr{\'o}m approximation.
This bound provides insight into the best choice of oversampling ratio $k/\ell$ between the two sketching dimensions.
There are small disagreements about this matter in the literature, with Tropp et al.~\cite[pg.~1456]{TYUC17} suggesting $k/\ell = 2$ and Nakatsukasa \cite[Alg.~2.1]{Nak20} advocating a smaller ratio $k/\ell = 1.5$.
At meetings, one hears whispers of even smaller oversampling ratios, approaching the minimal value $k/\ell = 1$.

Here is one way to formalize the selection of $k$ versus $\ell$. Given a target rank $q$ and a fixed budget of matrix--vector multiplications $t = k + \ell$, how should one pick $k$ and $\ell$ to guarantee
\begin{equation*}
    \expect \norm{\mat{A} - \mat{A}\langle\mat{\varOmega},\mat{\varPsi}\rangle}_{\rm F}^2
    \leq (1 + \varepsilon) \norm{\smash{\mat{A} - \lowrank{\mat{A}}_q}}_{\rm F}^2
\end{equation*}
with the smallest possible value of $\varepsilon$?
Treating $k$ and $\ell$ as continuous parameters and minimizing the right-hand side of \cref{eq:minimax}, we arrive at the answer
\begin{equation}
\label{eq:ratio}
    \frac{k}{\ell} = \frac{\sqrt{t - q}}{\sqrt{q}}.
\end{equation}
We obtain the optimal integer values by either rounding $k$ up and $\ell$ down or the reverse.

From this exercise, we see that different choices of the oversampling parameter are justified in different settings.
When the singular values decrease rapidly, we want the target rank $q$ as large as possible, so we should use an aggressive oversampling ratio $k/\ell \approx 1$.
When the singular values decrease more slowly, we want a smaller $q$ with a better accuracy parameter $\varepsilon$, leading to a larger optimal value of $k/\ell$.
These predictions, justified by appealing to worst-case error bounds, appear consistent with past advice \cite{TYUC17}.

\paragraph{Generalized Nystr\"om vs.\ randomized SVD}

The lower bound \cref{eq:minimax} shows that the generalized Nystr\"om approximation needs a large number of matrix--vector multiplications to guarantee
\begin{equation*}
    \expect \norm{\mat{A} - \mat{A}\langle\mat{\varOmega},\mat{\varPsi}\rangle}_{\rm F}^2
    \leq (1 + \varepsilon) \norm{\smash{\mat{A} - \lowrank{\mat{A}}_q}}_{\rm F}^2
\end{equation*}
for small $\varepsilon > 0$.
Indeed, the necessary number of multiplications is at least 
\begin{equation*}
    k + \ell \geq 2 
    \left(\frac{1}{\varepsilon} + 1 \right) 
    \left(\frac{1}{\varepsilon} + \sqrt{\frac{1}{\varepsilon^2} + \frac{2}{\varepsilon}} + 1\right) q 
    \ge \left(\frac{4}{\varepsilon^2} + 2 \right)q.
\end{equation*}
We obtain this expression by plugging in the optimal oversampling ratio \cref{eq:ratio} and then inverting the lower bound in \cref{eq:minimax}.
By comparison, the randomized SVD achieves the guarantee
\begin{equation*}
    \expect \norm{\mat{A} - \mat{\varPi}_{\mat{A} \mat{\varOmega}} \mat{A}}_{\rm F}^2
    \leq (1 + \varepsilon) 
    \norm{\smash{\mat{A} - \lowrank{\mat{A}}_q}}_{\rm F}^2.
\end{equation*}
as soon as the number of matrix--vector multiplications is 
\begin{equation*}
2 \ell \geq \left(\frac{2}{\varepsilon} + 2\right)q.
\end{equation*}
Thus the number of matrix--vector multiplications for the randomized SVD scales like $q/\varepsilon$ while the requirements for generalized Nystr\"om scale like $q/\varepsilon^2$.

\section{Analysis of sketch-and-solve}
\label{sec:sketch_analysis}

This section analyzes sketch-and-solve.
\Cref{sec:sketch-solve-formula} establishes a deterministic error decomposition for sketch-and-solve.
Then \cref{sec:wishart,sec:beta} establish expectation formulas for the Wishart and Beta random matrices.
Last, \cref{sec:partial-isometry-proof} establishes expectation formulas for sketch-and-solve and
\cref{sec:proof_optimality} establishes our lower bound for the sketch-and-solve error.
We repeatedly use rotation invariance in our arguments.

\subsection{Error decomposition} \label{sec:sketch-solve-formula}

We begin with a linear algebraic error decomposition for sketch-and-solve.
Versions of this decomposition have appeared in the literature \cite{TYUC17,CEMT25}, but we are unaware of a reference that handles the general case where the matrix $\mat{A}$ could be rank-deficient.

\begin{lemma}[Sketch-and-solve: Error decomposition] \label{lem:sketch-solve-formula}
    Let $\mat{\varOmega} \in \field^{n\times \ell}$ be an embedding, let $\mat{A} \in \field^{n\times d}$ and $\mat{B} \in \field^{n\times p}$ be matrices, and assume 
    \begin{equation*}
    r \coloneqq \rank(\mat{A}) = \rank(\mat{\varOmega}^* \mat{A}).
    \end{equation*}
    Let $\mat{A} = \mat{Q}\mat{R}$ be any decomposition which factors $\mat{A}$ as a product of a matrix $\mat{Q} \in \field^{n\times r}$ with orthonormal columns and a matrix $\mat{R} \in \field^{r\times d}$ with linearly independent rows, and extend $\mat{Q}$ to a square unitary matrix $[\begin{matrix} \mat{Q} & \mat{Q}_\perp \end{matrix}] \in \field^{n\times n}$.
    Then the difference between the sketch-and-solve solution $\Xhat = (\mat{\varOmega}^*\mat{A})^+(\mat{\varOmega}^*\mat{B})$ and the exact solution $\mat{X}_\star = \mat{A}^+\mat{B}$ is
    \begin{equation} \label{eq:sketch-and-solve-error-decomposition-1}
        \Xhat - \mat{X}_\star = \mat{R}^+ (\mat{\varOmega}^*\mat{Q})^+(\mat{\varOmega}^*\mat{Q}_\perp^{\vphantom{*}}) \mat{Q}_\perp^*\mat{B}
    \end{equation}
    and the residual norm is
    \begin{equation} \label{eq:sketch-and-solve-error-decomposition-2}
        \norm{\mat{B} - \mat{A}\Xhat}_{\rm F}^2 = \norm{\smash{(\mat{\varOmega}^*\mat{Q})^+(\mat{\varOmega}^*\mat{Q}_\perp^{\vphantom{*}}) \mat{Q}_\perp^*\mat{B}}}_{\rm F}^2 + 
        \norm{\mat{Q}_\perp^*\mat{B}}_{\rm F}^2.
    \end{equation}
    In these formulas, the optimal least-squares error is $\norm{\smash{\mat{Q}_\perp^*\mat{B}}}_{\rm F}^2 = \norm{\smash{\mat{B} - \mat{A} \mat{X}_{\star}}}_{\rm F}^2$.
\end{lemma}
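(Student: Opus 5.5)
The plan is to decompose $\mat{B}$ along the unitary splitting $[\begin{matrix} \mat{Q} & \mat{Q}_\perp \end{matrix}]$ and use pseudoinverse identities valid under the rank hypothesis. Write $\mat{B} = \mat{Q}\mat{Q}^*\mat{B} + \mat{Q}_\perp\mat{Q}_\perp^*\mat{B}$. Set $\mat{S} \coloneqq \mat{\varOmega}^*\mat{Q} \in \field^{\ell\times r}$. Since $\mat{R}$ has full row rank $r$, we have $\rank(\mat{\varOmega}^*\mat{A}) = \rank(\mat{S}\mat{R}) = \rank(\mat{S})$. The hypothesis $\rank(\mat{\varOmega}^*\mat{A}) = r$ therefore says that $\mat{S}$ has full column rank. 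So $\mat{S}^+\mat{S} = \Id_r$, and also $\mat{R}\mat{R}^+ = \Id_r$.

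The key algebraic step is the product rule
\begin{equation*}
(\mat{S}\mat{R})^+ = \mat{R}^+\mat{S}^+,
\end{equation*}
which holds because the left factor has full column rank and the right factor has full row rank. I would verify it directly via the Penrose conditions, or through the explicit formulas $\mat{S}^+ = (\mat{S}^*\mat{S})^{-1}\mat{S}^*$ and $\mat{R}^+ = \mat{R}^*(\mat{R}\mat{R}^*)^{-1}$. Likewise $\mat{A}^+ = \mat{R}^+\mat{Q}^*$, using $\mat{Q}^+ = \mat{Q}^*$.

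With these identities the error formula follows directly:
\begin{equation*}
\Xhat = \mat{R}^+\mat{S}^+\bigl(\mat{S}\mat{Q}^*\mat{B} + \mat{\varOmega}^*\mat{Q}_\perp\mat{Q}_\perp^*\mat{B}\bigr) = \mat{R}^+\mat{Q}^*\mat{B} + \mat{R}^+\mat{S}^+\mat{\varOmega}^*\mat{Q}_\perp\mat{Q}_\perp^*\mat{B}.
\end{equation*}
The first term equals $\mat{X}_\star = \mat{A}^+\mat{B}$, which gives \cref{eq:sketch-and-solve-error-decomposition-1}.

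For the residual, write
\begin{equation*}
\mat{B} - \mat{A}\Xhat = (\mat{B} - \mat{A}\mat{X}_\star) - \mat{Q}\mat{R}(\Xhat - \mat{X}_\star).
\end{equation*}
Since $\mat{A}\mat{X}_\star = \mat{Q}\mat{Q}^*\mat{B}$, the first piece is $\mat{Q}_\perp\mat{Q}_\perp^*\mat{B}$. Using $\mat{R}\mat{R}^+ = \Id_r$, the second piece is $\mat{Q}\,\mat{S}^+\mat{\varOmega}^*\mat{Q}_\perp\mat{Q}_\perp^*\mat{B}$. These two pieces lie in orthogonal ranges, those of $\mat{Q}$ and $\mat{Q}_\perp$. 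The Pythagorean theorem, together with the fact that $\mat{Q}$ and $\mat{Q}_\perp$ preserve the Frobenius norm, then yields \cref{eq:sketch-and-solve-error-decomposition-2}. The same computation identifies $\norm{\mat{Q}_\perp^*\mat{B}}_{\rm F}^2$ as the optimal least-squares error.

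The only delicate point is justifying the reverse-order law for pseudoinverses, since it fails in general. It is also where the rank-deficient case enters: the hypothesis $\rank(\mat{\varOmega}^*\mat{A}) = r$ is exactly what is needed to make $\mat{S}$ injective. Everything else is routine bookkeeping.
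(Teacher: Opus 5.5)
Your proposal is correct and follows essentially the same route as the paper. It splits $\mat{B}$ via $\Id = \mat{Q}\mat{Q}^* + \mat{Q}_\perp^{\vphantom{*}}\mat{Q}_\perp^*$, uses the reverse-order law $(\mat{\varOmega}^*\mat{Q}\mat{R})^+ = \mat{R}^+(\mat{\varOmega}^*\mat{Q})^+$ for a full-column-rank times full-row-rank product, and finishes with the Pythagorean theorem. The differences are cosmetic: the paper identifies the $\mat{Q}\mat{Q}^*\mat{B}$ term through $(\mat{\varOmega}^*\mat{A})^+(\mat{\varOmega}^*\mat{A}) = \mat{\varPi}_{\mat{A}^*}$ before factoring, whereas you factor first and use $\mat{S}^+\mat{S} = \Id_r$ with $\mat{A}^+ = \mat{R}^+\mat{Q}^*$; you also explicitly justify the injectivity of $\mat{\varOmega}^*\mat{Q}$, which the paper leaves implicit.
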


\begin{proof}
By assumption, $\rank(\mat{A}) = \rank(\mat{\varOmega}^* \mat{A})$ and thus $\range(\mat{A}^* \mat{\varOmega}) = \range(\mat{A}^*)$.
Consequently,
\begin{equation*}
    (\mat{\varOmega}^* \mat{A})^+ (\mat{\varOmega}^* \mat{A})
    = \mat{\varPi}_{\mat{A}^* \mat{\varOmega}}
    = \mat{\varPi}_{\mat{A}^*}.
\end{equation*}
Now, apply the decomposition of identity $\Id = \mat{Q}\mat{Q}^* + \mat{Q}_\perp^{\vphantom{*}} \mat{Q}_\perp^*$ to compute
\begin{align*}
    \Xhat &= (\mat{\varOmega}^*\mat{A})^+ \mat{\varOmega}^*\mat{B} \\
    &= (\mat{\varOmega}^*\mat{A})^+ \mat{\varOmega}^*\bigl[\mat{Q} \mat{Q}^* + \mat{Q}_{\perp}^{\vphantom{*}} \mat{Q}_{\perp}^*\bigr] \mat{B} \\
    &= (\mat{\varOmega}^*\mat{A})^+ \mat{\varOmega}^*\bigl[\mat{A} \mat{A}^+ + \mat{Q}_{\perp}^{\vphantom{*}} \mat{Q}_{\perp}^*\bigr] \mat{B} \\
    &= \mat{\varPi}_{\mat{A}^*} \mat{A}^+ \mat{B}
    + (\mat{\varOmega}^*\mat{A})^+ (\mat{\varOmega}^* \mat{Q}_{\perp}^{\vphantom{*}}) \mat{Q}_{\perp}^* \mat{B} \\
    &= \mat{A}^+ \mat{B} + (\mat{\varOmega}^*\mat{A})^+ (\mat{\varOmega}^* \mat{Q}_{\perp}^{\vphantom{*}}) \mat{Q}_{\perp}^* \mat{B}.
\end{align*}
Here we have used the fact that $\mat{Q} \mat{Q}^* = \mat{A} \mat{A}^+$ and $\mat{\varPi}_{\mat{A}^*} \mat{A}^+ = \mat{A}^+$.
Next, we make one further simplification.
\begin{equation*}
    (\mat{\varOmega}^*\mat{A})^+
    = (\mat{\varOmega}^* \mat{Q} \mat{R})^+
    = \mat{R}^+ (\mat{\varOmega}^* \mat{Q})^+.
\end{equation*}
This identity holds because $\mat{\varOmega}^* \mat{Q} \in \field^{\ell \times r}$ has linearly independent columns and $\mat{R} \in \field^{r \times d}$ has linearly independent rows.
We have established the error formula \cref{eq:sketch-and-solve-error-decomposition-1}.

Since the least-squares residual $\mat{B} - \mat{A}\mat{X}_\star$ is orthogonal to the range of $\mat{A}$,
\begin{equation*}
    \norm{\smash{\mat{B} - \mat{A}\Xhat}}_{\rm F}^2 = \norm{\mat{B} - \mat{A}\mat{X}_\star}_{\rm F}^2 + \norm{\smash{\mat{A}(\Xhat - \mat{X}_\star)}}_{\rm F}^2.
\end{equation*}
Using the decomposition of identity $\Id = \mat{Q}\mat{Q}^* + \mat{Q}_\perp^{\vphantom{*}} \mat{Q}_\perp^*$, we compute the first term.
\begin{equation*}
    \norm{\mat{B} - \mat{A}\mat{X}_\star}_{\rm F}
    = \norm{\smash{(\Id - \mat{A}\mat{A}^+) \mat{B}}}_{\rm F} 
    = \norm{\smash{(\Id - \mat{Q}\mat{Q}^*) \mat{B}}}_{\rm F} 
    = \norm{\smash{\mat{Q}_\perp^{\vphantom{*}}\mat{Q}_\perp^* \mat{B}}}_{\rm F} 
    = \norm{\smash{\mat{Q}_\perp^* \mat{B}}}_{\rm F}.
\end{equation*}
Next we compute the second term.
\begin{equation*}
    \norm{\smash{\mat{A}(\Xhat - \mat{X}_\star)}}_{\rm F} = \norm{\smash{\mat{Q}\mat{R}\mat{R}^+}(\mat{\varOmega}^*\mat{Q})^+(\mat{\varOmega}^*\mat{Q}_\perp^{\vphantom{*}}) \mat{Q}_\perp^*\mat{B}}_{\rm F} = \norm{\smash{(\mat{\varOmega}^*\mat{Q})^+(\mat{\varOmega}^*\mat{Q}_\perp^{\vphantom{*}}) \mat{Q}_\perp^*\mat{B}}}_{\rm F}.
\end{equation*}
The first identity is the decomposition $\mat{A} = \mat{Q}\mat{R}$ and the error formula \cref{eq:sketch-and-solve-error-decomposition-1}.
The second identity uses rotational invariance of the Frobenius norm and the identity $\mat{R}\mat{R}^+ = \Id$ for the matrix $\mat{R}$ with linearly independent rows.
The conclusion \cref{eq:sketch-and-solve-error-decomposition-2} follows.
\end{proof}

In the sections to follow, our basic strategy for analyzing sketch-and-solve is to substitute different values for the random embedding $\mat{\varOmega}$.
Then we perform linear algebraic manipulations to determine the expectation of $\norm{\smash{(\mat{\varOmega}^*\mat{Q})^+(\mat{\varOmega}^*\mat{Q}_\perp^{\vphantom{*}}) \mat{Q}_\perp^*\mat{B}}}_{\rm F}^2$.
To that end, we need a couple facts about Wishart and Beta random matrices that will be described in the next two sections.

\subsection{Wishart random matrices} \label{sec:wishart}

The first type of random matrix that will be used in the analysis is a Wishart matrix.

\begin{definition}[Wishart matrix]
    A Wishart matrix with parameters $(r, \ell)$ is a matrix
    \begin{equation*}
    \mat{W} = \mat{G}\mat{G}^*,
    \end{equation*}
    where $\mat{G} \in \field^{r \times \ell}$ is populated with iid $\mathcal{N}_\field(0,1)$ random variables.
    We write $\mat{W} \sim \Wishart_\field(r,\ell)$.
\end{definition}

\noindent The expected value of an inverse Wishart matrix is given in \cite{Sha80} and \cite[p.~97]{Mui82}.

\begin{fact}[Expected value of inverse Wishart matrix] \label{fact:wishart}
    If $\mat{W} \sim \Wishart_\field(r,\ell)$ with $r + \alpha_{\field} < \ell$, then
    \begin{equation*}
    \expect\mat{W}^{-1} = \frac{1}{\ell - r - \alpha_\field} \,\Id.
    \end{equation*}
\end{fact}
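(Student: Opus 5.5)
The plan is to reduce the matrix identity to a scalar computation using rotation invariance, and then evaluate one diagonal entry of $\mat{W}^{-1}$ through a Schur complement.

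\emph{Step 1 (the expectation is a multiple of the identity).} Since $r + \alpha_\field < \ell$, the matrix $\mat{G} \in \field^{r\times\ell}$ has full row rank with probability one. Hence $\mat{W} = \mat{G}\mat{G}^*$ is almost surely invertible. For any unitary $\mat{U}\in\field^{r\times r}$ we have $\mat{U}\mat{G} \sim \mat{G}$, so $\mat{U}\mat{W}\mat{U}^* \sim \mat{W}$. Taking inverses, once $\expect\mat{W}^{-1}$ is known to be finite it commutes with every unitary matrix. A matrix commuting with all unitaries is a scalar multiple of $\Id$, so $\expect \mat{W}^{-1} = c\,\Id$ with $c = \expect (\mat{W}^{-1})_{11}$.

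\emph{Step 2 (Schur complement).} Write $\mat{G} = \twobyone{\vec{g}^*}{\mat{G}_2}$, where $\vec{g}^*$ is the first row and $\mat{G}_2 \in \field^{(r-1)\times \ell}$ holds the remaining rows. The Schur complement formula for the $(1,1)$ entry of the inverse of $\mat{W}$ gives
\begin{equation*}
    (\mat{W}^{-1})_{11} = \frac{1}{\vec{g}^*\vec{g} - \vec{g}^*\mat{G}_2^*(\mat{G}_2\mat{G}_2^*)^{-1}\mat{G}_2\vec{g}} = \frac{1}{\norm{(\Id - \mat{\varPi}_{\mat{G}_2^*})\vec{g}}^2}.
\end{equation*}

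\emph{Step 3 (distribution of the denominator).} Condition on $\mat{G}_2$. The projector $\Id - \mat{\varPi}_{\mat{G}_2^*}$ has rank $m \coloneqq \ell - r + 1$. The vector $\vec{g}$ is standard Gaussian and independent of $\mat{G}_2$, so by rotation invariance $\norm{(\Id - \mat{\varPi}_{\mat{G}_2^*})\vec{g}}^2$ is distributed as a sum of $m$ independent copies of $|z|^2$ with $z\sim\mathcal{N}_\field(0,1)$.
\begin{itemize}
    \item Over $\real$, this is a $\chi^2_m$ variable, and $\expect[1/\chi^2_m] = 1/(m-2)$ provided $m>2$.
    \item Over $\complex$, each $|z|^2$ is $\mathrm{Exp}(1)$, so the sum is $\mathrm{Gamma}(m,1)$, and $\expect[1/\mathrm{Gamma}(m,1)] = 1/(m-1)$ provided $m>1$.
\end{itemize}
In both cases this equals $1/(\ell - r - \alpha_\field)$. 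The finiteness conditions are exactly the hypothesis $r + \alpha_\field < \ell$, which also justifies the finiteness assumed in Step 1. Because this conditional value does not depend on $\mat{G}_2$, taking the outer expectation gives $c = 1/(\ell-r-\alpha_\field)$.

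The only mildly delicate point is the reciprocal chi-square/Gamma moment, which is a one-line integral, together with the integrability check. Everything else follows directly from rotation invariance.
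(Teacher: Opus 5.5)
Your proof is correct. The paper gives no proof of this statement. It quotes it as a known fact with references \cite{Sha80} and \cite[p.~97]{Mui82}, so your argument supplies a self-contained derivation where the paper relies on the literature.

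What your route buys:
\begin{itemize}
\item It handles $\field=\real$ and $\field=\complex$ in one computation.
\item It makes the origin of $\alpha_\field$ transparent. After the Schur complement step, $1/(\mat{W}^{-1})_{11}$ is, conditionally on $\mat{G}_2$, a sum of $m=\ell-r+1$ independent copies of $|z|^2$. The only field-dependent input is $\expect[1/\chi^2_m]=1/(m-2)$ versus $\expect[1/\mathrm{Gamma}(m,1)]=1/(m-1)$, and the integrability conditions for these are exactly $r+\alpha_\field<\ell$.
\item Your reduction to a multiple of $\Id$ is the same rotation-invariance device the paper uses in \cref{prop:beta_exp}, so the argument fits the surrounding section well.
\end{itemize}

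One detail in Step~1 deserves a sentence. Before concluding that $\expect\mat{W}^{-1}$ commutes with all unitaries, you need every entry of $\mat{W}^{-1}$ to be integrable, not only the $(1,1)$ entry. This follows from two observations:
\begin{itemize}
\item Conjugation by permutation matrices shows that each diagonal entry has the law of $(\mat{W}^{-1})_{11}$, hence $\expect\tr\mat{W}^{-1}<\infty$.
\item Positive semidefiniteness gives $|(\mat{W}^{-1})_{ij}|\le\tr\mat{W}^{-1}$ for every off-diagonal entry.
\end{itemize}
When $r=1$ the block $\mat{G}_2$ is empty, and your formula correctly reduces to $1/\norm{\vec{g}}^2$ with $m=\ell$.
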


\noindent \Cref{fact:wishart} can be used to derive a sketching identity for Gaussian embeddings; see \cite[Prop.~10.1 \& 10.2]{HMT11} or \cite[Prop.~8.6]{TW23a}.

\begin{proposition}[Gaussian sketching identity] \label{prop:gaussian-sketch}
    Consider a Gaussian embedding \begin{equation*}
    \mat{\varOmega} = \twobyone{\mat{\varOmega}_1}{\mat{\varOmega}_2} \in \field^{n \times \ell}
    \end{equation*}
    that is partitioned into the first $r$ and last $n-r$ rows, where $r + \alpha_{\field} < \ell$.
    For any matrix $\mat{B} \in \field^{(n-r) \times p}$,
    \begin{equation*}
        \expect\, \norm{\smash{(\mat{\varOmega}_1^*)^+ \mat{\varOmega}_2^* \mat{B} }}_{\rm F}^2 = \frac{r}{\ell - r - \alpha_\field} \, \norm{\mat{B}}_{\rm F}^2.
    \end{equation*}
\end{proposition}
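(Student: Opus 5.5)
The plan is to exploit the independence of the two row blocks of a Gaussian embedding and condition on the first block.

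First, note the shapes. Since $\mat{\varOmega}$ has iid $\mathcal{N}_\field(0,1)$ entries, the blocks $\mat{\varOmega}_1 \in \field^{r\times \ell}$ and $\mat{\varOmega}_2 \in \field^{(n-r)\times\ell}$ are independent Gaussian matrices. Because $r + \alpha_\field < \ell$ (so $r<\ell$), $\mat{\varOmega}_1$ has full row rank $r$ with probability one. Hence $\mat{\varOmega}_1^* \in \field^{\ell\times r}$ has linearly independent columns and
\begin{equation*}
    (\mat{\varOmega}_1^*)^+ = (\mat{\varOmega}_1\mat{\varOmega}_1^*)^{-1}\mat{\varOmega}_1 .
\end{equation*}

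Next, condition on $\mat{\varOmega}_1$ and write $\mat{M} \coloneqq (\mat{\varOmega}_1^*)^+ \in \field^{r\times \ell}$. The quantity to average is $\norm{\mat{M}\mat{\varOmega}_2^*\mat{B}}_{\rm F}^2$, where $\mat{\varOmega}_2^* \in \field^{\ell\times(n-r)}$ is standard Gaussian and independent of $\mat{M}$. Expanding the Frobenius norm as a trace gives
\begin{equation*}
    \expect\bigl[\tr(\mat{M}\mat{\varOmega}_2^*\mat{B}\mat{B}^*\mat{\varOmega}_2\mat{M}^*) \,\big|\, \mat{\varOmega}_1\bigr].
\end{equation*}
We then use the standard Gaussian identity $\expect[\mat{G}\mat{C}\mat{G}^*] = \tr(\mat{C})\,\Id$, valid for a standard Gaussian $\mat{G}$ with unit-variance entries over either field. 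This follows entrywise from $\expect[g_{ij}\overline{g_{kl}}] = \delta_{ik}\delta_{jl}$; over $\complex$ the terms $\expect[g g]$ vanish, so no extra term appears. Applying it with $\mat{G} = \mat{\varOmega}_2^*$ and $\mat{C} = \mat{B}\mat{B}^*$, the conditional expectation equals
\begin{equation*}
    \norm{\mat{B}}_{\rm F}^2\,\norm{\mat{M}}_{\rm F}^2 .
\end{equation*}

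Finally, compute the Frobenius norm of $\mat{M}$:
\begin{equation*}
    \norm{\mat{M}}_{\rm F}^2 = \tr\bigl((\mat{\varOmega}_1\mat{\varOmega}_1^*)^{-1}\mat{\varOmega}_1\mat{\varOmega}_1^*(\mat{\varOmega}_1\mat{\varOmega}_1^*)^{-1}\bigr) = \tr\bigl((\mat{\varOmega}_1\mat{\varOmega}_1^*)^{-1}\bigr).
\end{equation*}
Here $\mat{\varOmega}_1\mat{\varOmega}_1^* \sim \Wishart_\field(r,\ell)$, so \cref{fact:wishart} gives the expectation $r/(\ell-r-\alpha_\field)$. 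Taking the outer expectation by the tower property, using Tonelli since all quantities are nonnegative, completes the argument.

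The only delicate points are justifying the pseudoinverse formula almost surely and checking the complex-case Gaussian second-moment identity. Finiteness of the final expectation is exactly the hypothesis $r+\alpha_\field<\ell$ inherited from \cref{fact:wishart}, so no separate integrability argument is needed.
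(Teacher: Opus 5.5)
Your proof is correct and matches the route the paper indicates. The paper writes out no proof of its own: it says the identity follows from the inverse-Wishart expectation (\cref{fact:wishart}) and cites \cite{HMT11,TW23a}. Your argument is exactly that standard derivation: condition on $\mat{\varOmega}_1$, apply the Gaussian second-moment identity to get $\norm{\mat{B}}_{\rm F}^2\norm{(\mat{\varOmega}_1^*)^+}_{\rm F}^2$, then use $\norm{(\mat{\varOmega}_1^*)^+}_{\rm F}^2=\tr\bigl((\mat{\varOmega}_1\mat{\varOmega}_1^*)^{-1}\bigr)$ together with \cref{fact:wishart}.
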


\subsection{Beta random matrices} \label{sec:beta}

The second type of random matrix that will be used in the analysis is the Beta random matrix of type I \cite{Mit70}, also known as the Jacobi random matrix.

\begin{definition}[Beta random matrix] \label{def:beta}
    A Beta random matrix with parameters $(r,\ell,n)$ where $r \leq n$ takes the form
    \begin{equation} \label{eq:wishart-decomposition}
        \mat{X} = (\mat{W}_1+\mat{W}_2)^{-1/2}\mat{W}_1(\mat{W}_1+\mat{W}_2)^{-1/2},
    \end{equation}
    where $\mat{W}_1 \sim \Wishart_\field(r,\ell)$ and $\mat{W}_2\sim \Wishart_\field(r,n-\ell)$ are independent random variables.
    We write $\mat{X} \sim \Beta_\field(r,\ell,n)$.
\end{definition}

\noindent We can directly compute the expected value of an inverse Beta random matrix.

\begin{proposition}[Expected value of inverse Beta matrix] \label{prop:beta_exp}
    If $\mat{X} \sim \Beta_\field(r,\ell,n)$ with $r + \alpha_{\field} < \ell$, then
    \begin{equation*}
        \expect \mat{X}^{-1} = \left(1 + \frac{n-\ell}{\ell - r - \alpha_\field}\right) \Id.
    \end{equation*}
\end{proposition}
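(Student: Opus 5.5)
Write $\mat{S} \coloneqq \mat{W}_1+\mat{W}_2$, so that $\mat{X} = \mat{S}^{-1/2}\mat{W}_1\mat{S}^{-1/2}$ and $\mat{X}^{-1} = \mat{S}^{1/2}\mat{W}_1^{-1}\mat{S}^{1/2}$. The condition $r+\alpha_\field<\ell$ makes $\mat{W}_1$ almost surely invertible with an integrable inverse, by \cref{fact:wishart}. The plan is to compute $\expect\mat{X}^{-1}$ by conditioning, and to avoid the awkward symmetric square roots through rotation invariance.

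\textbf{Step 1: representation.} Write $\mat{W}_1=\mat{G}_1\mat{G}_1^*$ and $\mat{W}_2=\mat{G}_2\mat{G}_2^*$, and set $\mat{G}=[\begin{matrix}\mat{G}_1 & \mat{G}_2\end{matrix}]\in\field^{r\times n}$, so that $\mat{S}=\mat{G}\mat{G}^*$. Take the polar-type factorization $\mat{G} = \mat{S}^{1/2}\mat{V}^*$, where $\mat{V}^* = \mat{S}^{-1/2}\mat{G}$ has orthonormal rows. Then
\begin{equation*}
\mat{X} = \mat{V}_1^*\mat{V}_1,
\end{equation*}
where $\mat{V}_1\in\field^{\ell\times r}$ consists of the first $\ell$ rows of $\mat{V}\in\field^{n\times r}$. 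Because the distribution of $\mat{G}$ is invariant under right multiplication by unitaries, $\mat{V}$ is a uniformly random $n\times r$ orthonormal matrix, and it is independent of $\mat{S}$. I would justify this independence in the standard way: for any fixed unitary $\mat{U}$, the pair $(\mat{S},\mat{V})$ has the same law as $(\mat{S},\mat{U}^*\mat{V})$, so conditional on $\mat{S}$ the matrix $\mat{V}$ is Haar distributed. Consequently $\mat{X}$ is independent of $\mat{S}$.

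\textbf{Step 2: the expectation identity.} Rearranging the definition gives $\mat{W}_1^{-1} = \mat{S}^{-1/2}\mat{X}^{-1}\mat{S}^{-1/2}$. Take expectations and use the independence from Step 1:
\begin{equation*}
\frac{1}{\ell-r-\alpha_\field}\Id = \expect\bigl[\mat{S}^{-1/2}\,\mat{M}\,\mat{S}^{-1/2}\bigr], \qquad \mat{M}\coloneqq\expect\mat{X}^{-1}.
\end{equation*}
The left-hand side comes from \cref{fact:wishart}. Next, $\mat{X}$ is invariant under conjugation by unitaries: replacing $\mat{V}$ by $\mat{V}\mat{U}$ for an $r\times r$ unitary $\mat{U}$ preserves its distribution and sends $\mat{X}$ to $\mat{U}^*\mat{X}\mat{U}$. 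Hence $\mat{M}=c\,\Id$ for some scalar $c$. The identity then reduces to $\frac{1}{\ell-r-\alpha_\field}\Id = c\,\expect\mat{S}^{-1}$. Since $\mat{S}\sim\Wishart_\field(r,n)$, \cref{fact:wishart} gives $\expect\mat{S}^{-1} = \frac{1}{n-r-\alpha_\field}\Id$, which is valid because $n>\ell>r+\alpha_\field$. Therefore
\begin{equation*}
c = \frac{n-r-\alpha_\field}{\ell-r-\alpha_\field} = 1+\frac{n-\ell}{\ell-r-\alpha_\field},
\end{equation*}
as claimed. If $n=\ell$, then $\mat{X}=\Id$ and the formula holds trivially.

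\textbf{Main obstacle: finiteness of $\expect\mat{X}^{-1}$.} Before pulling $\mat{M}$ out of the expectation, I must know that $\expect\mat{X}^{-1}$ is finite; the identity alone does not rule out infinite expectations. I would handle this with positivity. The matrices $\mat{X}^{-1}$ and $\mat{S}$ are psd and independent, so Tonelli applies to the trace:
\begin{equation*}
\expect\tr\bigl(\mat{S}^{-1/2}\mat{X}^{-1}\mat{S}^{-1/2}\bigr) = \expect\tr\mat{W}_1^{-1} < \infty.
\end{equation*}
Conditioning on $\mat{X}$ gives $\expect\tr(\mat{X}^{-1}\mat{S}^{-1}) = \frac{1}{n-r-\alpha_\field}\expect\tr\mat{X}^{-1}$. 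Hence $\expect\tr\mat{X}^{-1}<\infty$, which is enough to make $\mat{M}$ well defined.
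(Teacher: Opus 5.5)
Your proof is correct, but it takes a different route from the paper.

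The paper never separates $\mat{X}$ from $\mat{S}=\mat{W}_1+\mat{W}_2$. After noting conjugation invariance, it uses cyclicity of the trace to write $\tr\mat{X}^{-1}=\tr(\mat{W}_1^{-1}\mat{S})=r+\tr(\mat{W}_1^{-1}\mat{W}_2)$. From there it needs only three things: the independence of $\mat{W}_1$ and $\mat{W}_2$, the first moment $\expect\mat{W}_2=(n-\ell)\Id$, and a single application of \cref{fact:wishart}. Integrability comes for free because $\tr(\mat{W}_1^{-1}\mat{W}_2)\ge 0$.

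You instead rely on a heavier structural fact: the Beta matrix $\mat{X}$ is independent of the total $\mat{S}$. This is a matrix version of the classical independence of $g_1/(g_1+g_2)$ and $g_1+g_2$ for independent Gamma variables. You justify it correctly through the polar factor $\mat{V}$ and the uniqueness of the unitarily invariant law on the Stiefel manifold. You then apply \cref{fact:wishart} twice, to $\mat{W}_1\sim\Wishart_\field(r,\ell)$ and to $\mat{S}\sim\Wishart_\field(r,n)$.

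What your route buys is a conceptual reading of the constant as the ratio $(n-r-\alpha_\field)/(\ell-r-\alpha_\field)$ of two inverse-Wishart constants. It also gives the Haar representation $\mat{X}=\mat{V}_1^*\mat{V}_1$, which is the same polar-decomposition construction the paper uses just after the proposition to link Beta matrices to random orthonormal embeddings. The cost is the extra independence lemma.

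Note also that your ``finiteness'' paragraph already completes the proof on its own. It yields $\expect\tr\mat{X}^{-1}=(n-r-\alpha_\field)\,\expect\tr\mat{W}_1^{-1}=r(n-r-\alpha_\field)/(\ell-r-\alpha_\field)$ exactly. Combined with $\expect\mat{X}^{-1}=c\,\Id$, this fixes $c$ without the matrix-valued identity of Step 2.

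In that trace form the two arguments are mirror images. The paper uses $\tr\mat{X}^{-1}=\tr(\mat{W}_1^{-1}\mat{S})$ together with the independence of $\mat{W}_1$ and $\mat{W}_2$. You use $\tr\mat{W}_1^{-1}=\tr(\mat{X}^{-1}\mat{S}^{-1})$ together with the independence of $\mat{X}$ and $\mat{S}$.
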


\begin{proof}
    The matrix $\mat{X} \in \field^{r \times r}$ satisfies
    \begin{equation*}
        \mat{U}^*\mat{X}\mat{U} \sim \mat{X} \quad \text{if } \mat{U} \in \field^{r \times r} \text{ is unitary}.
    \end{equation*}
    It follows that $\expect\mat{X}^{-1} = [\expect \tr(\mat{X}^{-1}) / r]\,\Id$.
    Using \cref{def:beta} for the Beta random matrix model, we compute
    \begin{equation*}
        \tr(\mat{X}^{-1}) = \tr[(\mat{W}_1+\mat{W}_2)^{1/2}\mat{W}_1^{-1}(\mat{W}_1+\mat{W}_2)^{1/2}] = \tr[\mat{W}_1^{-1}(\mat{W}_1+\mat{W}_2)] = r + \tr(\mat{W}_1^{-1}\mat{W}_2^{\vphantom{-1}}).
    \end{equation*}
    Using the independence of $\mat{W}_1$ and $\mat{W}_2$, the expected value of the inverse Wishart matrix (\cref{fact:wishart}), and the fact that $\mat{W}_2$ is the sum of $n-\ell$ outer products of Gaussian vectors,
    \begin{equation*}
        \expect \tr(\mat{X}^{-1}) = r + \tr\bigl( \expect\mat{W}_1^{-1}\,\expect\mat{W}_2^{\vphantom{-1}} \bigr) 
        = r + \tr\left( \frac{n-\ell}{\ell - r - \alpha_\field} \Id \right) = \left(1 + \frac{n-\ell}{\ell - r - \alpha_\field}\right) r.
    \end{equation*}
    This completes the proof.
\end{proof}

We now connect the Beta random matrix to a random orthonormal embedding $\mat{\varOmega}$.
First, partition $\mat{\varOmega}$ into its first $r$ and last $n - r$ rows:
\begin{equation*}
\mat{\varOmega} = \begin{bmatrix} \mat{\varOmega}_1 \\ \mat{\varOmega}_2 \end{bmatrix} \in \field^{n \times \ell}.
\end{equation*}
We observe that $\mat{\varOmega}_1$ can be realized as the top left submatrix of a random unitary matrix
\begin{equation*}
    \mat{U} = \twobytwo{\mat{\varOmega}_1}{\mat{\varOmega}_3}{\mat{\varOmega}_2}{\mat{\varOmega}_4} \in \field^{n\times n}.
\end{equation*}
On the basis of this observation, $\mat{\varOmega}_1$ can be generated by drawing matrices $\mat{G}_1 \in \field^{r \times \ell}$ and $\mat{G}_3 \in \field^{r \times (n - \ell)}$ with iid $\mathcal{N}_\field(0,1)$ entries, and applying the polar decomposition
\begin{equation*}
    \onebytwo{\mat{G}_1}{\mat{G}_3} = \mat{W}^{1/2} \onebytwo{\mat{\varOmega}_1}{\mat{\varOmega}_3} \quad \text{where} \quad \mat{W} = \mat{G}_1 \mat{G}_1^* + \mat{G}_3 \mat{G}_3^*.
\end{equation*}
If we define $\mat{W}_1^{\vphantom{*}} \coloneqq \mat{G}_1 \mat{G}_1^*$ and $\mat{W}_2^{\vphantom{*}} \coloneqq \mat{G}_3 \mat{G}_3^*$, then it follows
\begin{equation*}
    \mat{\varOmega}_1 \mat{\varOmega}_1^* 
    = \mat{W}^{-1/2}\mat{G}_1 \mat{G}_1^* \mat{W}^{-1/2} = (\mat{W}_1+\mat{W}_2)^{-1/2}\mat{W}_1(\mat{W}_1+\mat{W}_2)^{-1/2},
\end{equation*}
where $\mat{W}_1 \sim \Wishart_\field(r,\ell)$ and $\mat{W}_2\sim \Wishart_\field(r,n-\ell)$ are independent.
Therefore we have shown $\mat{\varOmega}_1 \mat{\varOmega}_1^* \sim \Beta_\field(r,\ell,n)$.
Using this connection, we now establish a useful sketching identity for random orthonormal matrices.

\begin{proposition}[Random orthonormal sketching identity] \label{prop:partial-isom}
    Consider a random orthonormal embedding 
    \begin{equation*}
    \mat{\varOmega} = \twobyone{\mat{\varOmega}_1}{\mat{\varOmega}_2} \in \field^{n \times \ell}
    \end{equation*}
    that is partitioned into its first $r$ and last $n-r$ rows, where $r + \alpha_{\field} < \ell$.
    For any invertible matrix $\mat{R} \in \field^{\ell \times \ell}$ and any matrix $\mat{B} \in \field^{(n-r) \times p}$, 
    \begin{equation*}
        \expect \norm{\smash{ (\mat{R}^* \mat{\varOmega}_1^*)^+ \mat{R}^* \mat{\varOmega}_2^* \mat{B}}}_{\rm F}^2 \geq \frac{n-\ell}{n-r} \, \frac{r}{\ell - r - \alpha_\field} \, \norm{\mat{B}}_{\rm F}^2.
    \end{equation*}
    This lower bound holds with equality when $\mat{R} = \Id$.
\end{proposition}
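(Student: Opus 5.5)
The plan is to first compute the case $\mat{R} = \Id$ exactly, using the Beta connection from \cref{sec:beta}. Then, for general $\mat{R}$, I would write $(\mat{R}^*\mat{\varOmega}_1^*)^+\mat{R}^*$ as the $\mat{R}=\Id$ operator plus a perturbation, and show by a symmetry argument that the cross term vanishes in expectation. That leaves the $\mat{R}=\Id$ value plus a nonnegative term.

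\emph{Preliminaries.} Since $r+\alpha_\field<\ell$, $\mat{\varOmega}_1\in\field^{r\times\ell}$ has full row rank almost surely; I take this as given, for instance from the Beta representation $\mat{\varOmega}_1\mat{\varOmega}_1^*\sim\Beta_\field(r,\ell,n)$ with $\mat{W}_1$ invertible. Then $\mat{R}^*\mat{\varOmega}_1^*\in\field^{\ell\times r}$ has full column rank, so
\begin{equation*}
\mat{L}\coloneqq(\mat{R}^*\mat{\varOmega}_1^*)^+\mat{R}^*
\end{equation*}
is a left inverse of $\mat{\varOmega}_1^*$, that is, $\mat{L}\mat{\varOmega}_1^*=\Id_r$. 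I would write $\mat{L}=\mat{L}_0+\mat{K}$ with $\mat{L}_0=(\mat{\varOmega}_1^*)^+=(\mat{\varOmega}_1\mat{\varOmega}_1^*)^{-1}\mat{\varOmega}_1$. Then $\mat{K}\mat{\varOmega}_1^*=\mat{0}$, and $\mat{K}$ is a function of $\mat{\varOmega}_1$ alone, with $\mat{R}$ deterministic.

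\emph{Symmetry step.} For any unitary $\mat{V}\in\field^{(n-r)\times(n-r)}$, rotation invariance gives $(\mat{\varOmega}_1,\mat{V}\mat{\varOmega}_2)\sim(\mat{\varOmega}_1,\mat{\varOmega}_2)$. Let $\mat{M}$ be any matrix depending only on $\mat{\varOmega}_1$. Averaging over Haar $\mat{V}$ and using $\mat{\varOmega}_2^*\mat{\varOmega}_2=\Id-\mat{\varOmega}_1^*\mat{\varOmega}_1$, I get
\begin{equation*}
\expect[\mat{\varOmega}_2\mat{M}\mat{\varOmega}_2^*]=\frac{\expect\tr\bigl(\mat{M}(\Id-\mat{\varOmega}_1^*\mat{\varOmega}_1)\bigr)}{n-r}\,\Id.
\end{equation*}
Expand $\norm{(\mat{L}_0+\mat{K})\mat{\varOmega}_2^*\mat{B}}_{\rm F}^2$; this produces three terms, which I handle in turn.

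\emph{First, the main term} uses $\mat{M}=\mat{L}_0^*\mat{L}_0=\mat{\varOmega}_1^*(\mat{\varOmega}_1\mat{\varOmega}_1^*)^{-2}\mat{\varOmega}_1$. Its trace against $\Id-\mat{\varOmega}_1^*\mat{\varOmega}_1$ is $\tr((\mat{\varOmega}_1\mat{\varOmega}_1^*)^{-1})-r$. By \cref{prop:beta_exp}, this has expectation
\begin{equation*}
r\,\frac{n-\ell}{\ell-r-\alpha_\field},
\end{equation*}
so the main term equals $\frac{n-\ell}{n-r}\frac{r}{\ell-r-\alpha_\field}\norm{\mat{B}}_{\rm F}^2$. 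This gives the equality case $\mat{R}=\Id$.

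\emph{Second, the cross term} is $2\operatorname{Re}\tr(\mat{B}^*\mat{\varOmega}_2\mat{M}\mat{\varOmega}_2^*\mat{B})$ with $\mat{M}=\mat{L}_0^*\mat{K}$. Here $\tr(\mat{M})=\tr((\mat{\varOmega}_1\mat{\varOmega}_1^*)^{-1}\mat{K}\mat{\varOmega}_1^*)=0$ and $\tr(\mat{M}\mat{\varOmega}_1^*\mat{\varOmega}_1)=\tr(\mat{L}_0^*\mat{K}\mat{\varOmega}_1^*\mat{\varOmega}_1)=0$, both because $\mat{K}\mat{\varOmega}_1^*=\mat{0}$. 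So the cross term has zero expectation.

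\emph{Third, the remaining term} $\norm{\mat{K}\mat{\varOmega}_2^*\mat{B}}_{\rm F}^2$ is nonnegative, which gives the inequality.

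\emph{Expected obstacle.} The main subtlety is justifying the exchange of expectation and integrability. The cross term is bounded by Cauchy--Schwarz via the other two terms, but the $\mat{K}$ term could have infinite mean. In that case the inequality holds trivially. To make the expansion rigorous, I would condition on $\mat{\varOmega}_1$ and apply the Haar average over $\mat{V}$ conditionally. For fixed $\mat{\varOmega}_1$ everything is bounded, since $\norm{\mat{\varOmega}_2}\le1$, and the cross term vanishes pointwise in $\mat{\varOmega}_1$ after averaging over $\mat{V}$. Only then would I take the outer expectation, using monotone convergence for the nonnegative terms.
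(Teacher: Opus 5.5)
Your proof is correct, and its skeleton is the paper's: average over a Haar unitary on the last $n-r$ coordinates to make $\expect[\mat{\varOmega}_2\mat{M}\mat{\varOmega}_2^*]$ a multiple of $\Id$, substitute $\mat{\varOmega}_2^*\mat{\varOmega}_2 = \Id - \mat{\varOmega}_1^*\mat{\varOmega}_1$, and finish with \cref{prop:beta_exp}. You differ in how $\mat{R}$ is removed. The paper symmetrizes once, reducing the left-hand side to $(\expect\norm{\mat{L}}_{\rm F}^2 - r)\norm{\mat{B}}_{\rm F}^2/(n-r)$. It then invokes the separate ``weighting hurts'' lemma (\cref{lem:weighting_hurts}) to get $\norm{\mat{L}}_{\rm F}^2 \ge \norm{\mat{L}_0}_{\rm F}^2$; that lemma is proved by conjugating $\mat{\varPi}_{\mat{\varOmega}_1^*} \preccurlyeq \Id$ in the psd order. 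Your split $\mat{L} = \mat{L}_0 + \mat{K}$ with $\mat{K}\mat{\varOmega}_1^* = \mat{0}$ proves the same fact more elementarily. Since $\mat{K}\mat{L}_0^* = \mat{K}\mat{\varOmega}_1^*(\mat{\varOmega}_1\mat{\varOmega}_1^*)^{-1} = \mat{0}$, you get $\mat{L}\mat{L}^* = \mat{L}_0\mat{L}_0^* + \mat{K}\mat{K}^*$. In other words, the pseudoinverse is the minimum-norm left inverse, and this recovers even the psd-order form of the lemma in the full-rank case needed here. It also gives an exact expression for the gap: your third term has expectation $\expect\norm{\mat{K}}_{\rm F}^2\,\norm{\mat{B}}_{\rm F}^2/(n-r)$, so for $\mat{B}\neq\mat{0}$ equality holds exactly when $\mat{K} = \mat{0}$ almost surely. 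The paper's ordering is slightly more economical. Applying your decomposition after its reduction gives $\norm{\mat{L}}_{\rm F}^2 = \norm{\mat{L}_0}_{\rm F}^2 + \norm{\mat{K}}_{\rm F}^2$ in one line, with no cross-term computation, though your two trace identities are immediate anyway. Your integrability caveat is harmless but unnecessary. The bound $\sigma_i(\mat{R}^*\mat{\varOmega}_1^*) \ge \sigma_{\min}(\mat{R})\,\sigma_i(\mat{\varOmega}_1^*)$ gives $\norm{\mat{L}}_{\rm F} \le \norm{\mat{R}}\,\norm{\mat{R}^{-1}}\,\norm{\mat{L}_0}_{\rm F}$, and $\expect\norm{\mat{L}_0}_{\rm F}^2 = \expect\tr\bigl((\mat{\varOmega}_1\mat{\varOmega}_1^*)^{-1}\bigr) < \infty$, so every term already has finite mean.
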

\begin{proof}
Since the expectation and the trace commute,
\begin{align*}
    \expect \norm{(\mat{R}^* \mat{\varOmega}_1^*)^+ \mat{R}^* \mat{\varOmega}_2^* \mat{B}}_{\rm F}^2
    &= \expect \tr\bigl(\mat{B}^* \mat{\varOmega}_2^{\vphantom{*}} \mat{R} (\mat{\varOmega}_1^{\vphantom{*}} \mat{R})^{+} (\mat{R}^* \mat{\varOmega}_1^*)^{+} \mat{R}^* \mat{\varOmega}_2^* \mat{B}\bigr) \\
    &= \tr\bigl(\mat{B}^* \expect\bigl[ \mat{\varOmega}_2^{\vphantom{*}} \mat{R} (\mat{\varOmega}_1^{\vphantom{*}} \mat{R})^{+} (\mat{R}^* \mat{\varOmega}_1^*)^{+} \mat{R}^* \mat{\varOmega}_2^* \bigr] \mat{B}\bigr).
\end{align*}
The matrix $\mat{X} \coloneqq \mat{\varOmega}_2^{\vphantom{*}} \mat{R} (\mat{\varOmega}_1^{\vphantom{*}} \mat{R})^{+} (\mat{R}^* \mat{\varOmega}_1^*)^{+} \mat{R}^* \mat{\varOmega}_2^*$ satisfies
\begin{equation*}
\mat{U}^*\mat{X}\mat{U} \sim \mat{X} \quad \text{if } \mat{U} \in \field^{(n-r) \times (n-r)} \text{ is unitary}.
\end{equation*}
It follows that $\expect \mat{X} = [\expect \tr (\mat{X}) / (n-r) ] \,\Id$ and therefore
\begin{equation*}
    \tr\bigl(\mat{B}^* \expect\bigl[ \mat{\varOmega}_2^{\vphantom{*}} \mat{R} (\mat{\varOmega}_1^{\vphantom{*}} \mat{R})^{+} (\mat{R}^* \mat{\varOmega}_1^*)^{+} \mat{R}^* \mat{\varOmega}_2^* \bigr] \mat{B}\bigr)
    = \frac{ \expect \tr\bigl( \mat{\varOmega}_2^{\vphantom{*}} \mat{R} (\mat{\varOmega}_1^{\vphantom{*}} \mat{R})^{+} (\mat{R}^* \mat{\varOmega}_1^*)^{+} \mat{R}^* \mat{\varOmega}_2^* \bigr)}{n-r}
    \tr\bigl(\mat{B}^* \mat{B} \bigr).
\end{equation*}
We know that $\tr\bigl(\mat{B}^* \mat{B} \bigr) = \norm{\mat{B}}_{\rm F}^2$.
Further, since $\mat{\varOmega}$ has orthonormal columns, we know that $\Id = \mat{\varOmega}^* \mat{\varOmega} = \mat{\varOmega}_1^* \mat{\varOmega}_1^{\vphantom{*}} + \mat{\varOmega}_2^* \mat{\varOmega}_2^{\vphantom{*}}$ and consequently
\begin{align*}
    \tr\bigl( (\mat{\varOmega}_1^{\vphantom{*}} \mat{R})^{+} (\mat{R}^* \mat{\varOmega}_1^*)^{+} \mat{R}^* \mat{\varOmega}_2^* \mat{\varOmega}_2^{\vphantom{*}} \mat{R} \bigr) 
    &= \tr\bigl( (\mat{\varOmega}_1^{\vphantom{*}} \mat{R})^{+} (\mat{R}^* \mat{\varOmega}_1^*)^{+} \mat{R}^* \mat{R} \bigr)
    - \tr\bigl( (\mat{\varOmega}_1^{\vphantom{*}} \mat{R})^{+} (\mat{R}^* \mat{\varOmega}_1^*)^{+} \mat{R}^* \mat{\varOmega}_1^* \mat{\varOmega}_1^{\vphantom{*}} \mat{R} \bigr) \\
    &= \norm{ (\mat{R}^* \mat{\varOmega}_1^*)^{+} \mat{R}^* }_{\rm F}^2
    - \tr\bigl( \mat{\varOmega}_1^{\vphantom{*}} \mat{R} (\mat{\varOmega}_1^{\vphantom{*}} \mat{R})^{+} (\mat{R}^* \mat{\varOmega}_1^*)^{+} \mat{R}^* \mat{\varOmega}_1^* \bigr).
\end{align*}
The matrix $\mat{\varOmega}_1^{\vphantom{*}} \mat{R} (\mat{\varOmega}_1^{\vphantom{*}} \mat{R})^{+} (\mat{R}^* \mat{\varOmega}_1^*)^{+} \mat{R}^* \mat{\varOmega}_1^*$ is an orthogonal projection and thus
\begin{equation*}
    \tr\bigl( \mat{\varOmega}_1^{\vphantom{*}} \mat{R} (\mat{\varOmega}_1^{\vphantom{*}} \mat{R})^{+} (\mat{R}^* \mat{\varOmega}_1^*)^{+} \mat{R}^* \mat{\varOmega}_1^* \bigr)
    = \tr\bigl(\mat{\varPi}_{\mat{\varOmega}_1 \mat{R}} \bigr) 
    = \rank(\mat{\varOmega}_1 \mat{R})
    = r.
\end{equation*}
In summary, we have established the equality
\begin{equation*}
    \expect \norm{\smash{ (\mat{R}^* \mat{\varOmega}_1^*)^+ \mat{R}^* \mat{\varOmega}_2^* \mat{B}}}_{\rm F}^2 = \frac{\expect \norm{ (\mat{R}^* \mat{\varOmega}_1^*)^{+} \mat{R}^* }_{\rm F}^2 - r}{n-r} \, \norm{\mat{B}}_{\rm F}^2.
\end{equation*}
To finish the proof, we apply the ``weighting hurts'' lemma (\cref{lem:weighting_hurts}) that appears at the end of this section
with
$\mat{M}=\mat{\varOmega}_1^*\in\field^{\ell\times r}$. For any matrix $\mat{\varOmega}_1 \in \field^{r \times \ell}$ and any invertible matrix $\mat{R} \in \field^{\ell \times \ell}$,
\begin{equation*}
    \norm{ (\mat{R}^* \mat{\varOmega}_1^*)^{+} \mat{R}^* }_{\rm F}^2
    \geq \norm{ (\mat{\varOmega}_1^*)^{+}}_{\rm F}^2
\end{equation*}
Since $\mat{\varOmega}_1$ has linearly independent rows, we can simplify further.
\begin{equation*}
    \norm{ (\mat{\varOmega}_1^*)^{+} }_{\rm F}^2
    = \tr\bigl( (\mat{\varOmega}_1^*)^{+} \mat{\varOmega}_1^{+} \bigr)
    = \tr \bigl((\mat{\varOmega}_1^{\vphantom{*}} \mat{\varOmega}_1^*)^{-1}\bigr).
\end{equation*}
Since $\mat{\varOmega}_1 \mat{\varOmega}_1^* \sim \Beta_\field(r,\ell,n)$, we use \cref{prop:beta_exp} to calculate
\begin{equation*}
    \expect \tr \bigl((\mat{\varOmega}_1^{\vphantom{*}} \mat{\varOmega}_1^*)^{-1}\bigr)
    = \tr \bigl( \expect (\mat{\varOmega}_1^{\vphantom{*}} \mat{\varOmega}_1^*)^{-1}\bigr)
    = \tr \Biggl(\Biggl[1 + \frac{n -\ell}{\ell - r - \alpha_{\field}}\Biggr] \Id \Biggr)
    = \Biggl[1 + \frac{n -\ell}{\ell - r - \alpha_{\field}}\Biggr] r.
\end{equation*}
This establishes the lower bound error formula and completes the proof.
\end{proof}

We conclude by proving the ``weighting hurts'' lemma used in the proof of \cref{prop:partial-isom}.

\begin{lemma}[Weighting hurts] \label{lem:weighting_hurts}
Consider a matrix $\mat{M} \in \field^{\ell \times r}$ and an invertible matrix $\mat{R} \in \field^{\ell \times \ell}$.
Then
\begin{equation}
\label{eq:to_verify}
    \norm{\smash{\mat{M}^+}}^2_{\rm F} \leq \norm{\smash{(\mat{R}^* \mat{M})^+ \mat{R}^*}}_{\rm F}^2.
\end{equation}
\end{lemma}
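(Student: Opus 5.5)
The plan is to view both sides of \cref{eq:to_verify} as Frobenius norms of solutions to one linear matrix equation, and to show that $\mat{M}^+$ is its minimum-norm solution. Set $\mat{Y} \coloneqq (\mat{R}^*\mat{M})^+\mat{R}^* \in \field^{r\times \ell}$. The first step is to verify
\begin{equation*}
    \mat{Y}\mat{M} = (\mat{R}^*\mat{M})^+(\mat{R}^*\mat{M}) = \mat{\varPi}_{(\mat{R}^*\mat{M})^*} = \mat{\varPi}_{\mat{M}^*\mat{R}} = \mat{\varPi}_{\mat{M}^*} = \mat{M}^+\mat{M}.
\end{equation*}
The middle equality holds because $\range(\mat{M}^*\mat{R}) = \range(\mat{M}^*)$, which uses the invertibility of $\mat{R}$. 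Hence $\mat{Y}$ and $\mat{M}^+$ both solve $\mat{Z}\mat{M} = \mat{M}^+\mat{M}$.

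The second step is to write $\mat{Y} = \mat{M}^+ + \mat{E}$, where $\mat{E} \coloneqq \mat{Y} - \mat{M}^+$. By the first step, $\mat{E}\mat{M} = \mat{0}$. Right-multiplying by $\mat{M}^+$ gives $\mat{E}\mat{M}\mat{M}^+ = \mat{0}$. Expanding the norm,
\begin{equation*}
    \norm{\mat{Y}}_{\rm F}^2 = \norm{\smash{\mat{M}^+}}_{\rm F}^2 + \norm{\mat{E}}_{\rm F}^2 + 2\,\mathrm{Re}\,\tr\bigl(\mat{M}^+\mat{E}^*\bigr).
\end{equation*}

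The third step is to show that the cross term vanishes. Using the Penrose identity $\mat{M}^+ = \mat{M}^+\mat{M}\mat{M}^+$ and the fact that $\mat{M}\mat{M}^+$ is Hermitian,
\begin{equation*}
    \mat{M}^+\mat{E}^* = \mat{M}^+(\mat{M}\mat{M}^+)\mat{E}^* = \mat{M}^+\bigl(\mat{E}\mat{M}\mat{M}^+\bigr)^* = \mat{0}.
\end{equation*}
It follows that $\norm{\mat{Y}}_{\rm F}^2 = \norm{\smash{\mat{M}^+}}_{\rm F}^2 + \norm{\mat{E}}_{\rm F}^2 \geq \norm{\smash{\mat{M}^+}}_{\rm F}^2$, which is \cref{eq:to_verify}.

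The only step needing care is the first one. One must identify $(\mat{R}^*\mat{M})^+(\mat{R}^*\mat{M})$ as the projection onto $\range(\mat{M}^*\mat{R})$ and argue that this range equals $\range(\mat{M}^*)$. This works without assuming $\mat{M}$ has full column rank, so the lemma holds exactly as stated. The remaining steps are routine Penrose-identity manipulations.
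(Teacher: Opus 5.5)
Your proof is correct, and it takes a genuinely different route from the paper.

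The paper works entirely in the psd order. It conjugates $\mat{\varPi}_{\mat{M}} \preccurlyeq \Id$ by $\mat{R}\mat{R}^*\mat{M}$, then conjugates the result by $(\mat{M}^*\mat{R}\mat{R}^*\mat{M})^+$, using $\range(\mat{M}^*\mat{R}\mat{R}^*\mat{M}) = \range(\mat{M}^*)$ to simplify. This gives the matrix inequality $(\mat{M}^*\mat{M})^+ \preccurlyeq (\mat{R}^*\mat{M})^+(\mat{R}^*\mat{R})(\mat{M}^*\mat{R})^+$, whose trace is the claim.

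You instead note that $\mat{Y} = (\mat{R}^*\mat{M})^+\mat{R}^*$ is another solution of $\mat{Z}\mat{M} = \mat{M}^+\mat{M}$. The range identity $\range(\mat{M}^*\mat{R}) = \range(\mat{M}^*)$ is the only place invertibility of $\mat{R}$ enters. You then use the minimum-norm property of the pseudoinverse through the orthogonal splitting $\mat{Y} = \mat{M}^+ + \mat{E}$ with $\mat{E}\mat{M} = \mat{0}$; this is essentially the textbook proof of the Gauss--Markov theorem.

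Your route has some advantages:
\begin{itemize}
\item It uses only Penrose identities and never manipulates $(\mat{M}^*\mat{R}\mat{R}^*\mat{M})^+$.
\item It gives the exact gap $\norm{\mat{Y}}_{\rm F}^2 - \norm{\mat{M}^+}_{\rm F}^2 = \norm{\mat{Y} - \mat{M}^+}_{\rm F}^2$. Hence equality holds if and only if $(\mat{R}^*\mat{M})^+\mat{R}^* = \mat{M}^+$.
\item You show $\mat{M}^+\mat{E}^* = \mat{0}$ as a matrix, not merely that the real part of its trace vanishes. So you actually get $\mat{Y}\mat{Y}^* = \mat{M}^+(\mat{M}^+)^* + \mat{E}\mat{E}^*$, which recovers the paper's psd-order inequality $(\mat{M}^*\mat{M})^+ \preccurlyeq \mat{Y}\mat{Y}^*$ as a byproduct.
\end{itemize}

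The paper's argument reaches that matrix inequality directly by congruence. This matches the psd-order style it uses later for the Schur-complement analysis, but it does not directly expose the size of the gap.
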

\begin{proof}
Introduce the orthogonal projection $\mat{\varPi}_{\mat{M}} = \mat{M} (\mat{M}^* \mat{M})^+ \mat{M}^*$ onto the range of $\mat{M}$.
Since $\mat{\varPi}_{\mat{M}}$ is an orthogonal projection, it is bounded by the identity matrix in the psd order.
\begin{equation*}
    \mat{M} (\mat{M}^* \mat{M})^+ \mat{M}^* \preccurlyeq \Id. 
\end{equation*}
The psd order is preserved under conjugation.
Thus, we may conjugate this relation by $\mat{R} \mat{R}^* \mat{M}$ to obtain
\begin{equation}
\label{eq:to_conjugate}
    (\mat{M}^* \mat{R} \mat{R}^* \mat{M}) (\mat{M}^* \mat{M})^{+} (\mat{M}^* \mat{R} \mat{R}^* \mat{M})
    \preccurlyeq  \mat{M}^* (\mat{R} \mat{R}^*)^2 \mat{M}.
\end{equation}
Since $\mat{R}$ is invertible, we know that $\range(\mat{M}^* \mat{R} \mat{R}^* \mat{M})
    = \range(\mat{M}^* \mat{R}) = \range(\mat{M}^*)$.
Therefore,
\begin{equation}
\label{eq:use_me}
    (\mat{M}^* \mat{R} \mat{R}^* \mat{M}) 
    (\mat{M}^* \mat{R} \mat{R}^* \mat{M})^+
    = \mat{\varPi}_{\mat{M}^* \mat{R} \mat{R}^* \mat{M}} 
    = \mat{\varPi}_{\mat{M}^*}.
\end{equation}
By conjugating \cref{eq:to_conjugate} by $(\mat{M}^* \mat{R} \mat{R}^* \mat{M})^+$ and using \cref{eq:use_me}, we find
\begin{equation*}
    (\mat{M}^* \mat{M})^{+}
    \preccurlyeq  
    (\mat{M}^* \mat{R} \mat{R}^* \mat{M})^+
    \mat{M}^* (\mat{R} \mat{R}^*)^2 \mat{M}
    (\mat{M}^* \mat{R} \mat{R}^* \mat{M})^+ = (\mat{R}^* \mat{M})^+ (\mat{R}^* \mat{R}) (\mat{M}^* \mat{R})^+.
\end{equation*}
Take the trace to yield
\begin{equation*}
    \norm{\smash{\mat{M}^+}}^2_{\rm F} = \tr\bigl(\mat{M}^+ (\mat{M}^*)^+\bigr)
    \leq \tr\bigl((\mat{R}^* \mat{M})^+ (\mat{R}^* \mat{R}) (\mat{M}^* \mat{R})^+\bigr) = \norm{\smash{(\mat{R}^* \mat{M})^+ \mat{R}^*}}_{\rm F}^2.
\end{equation*}
We have obtained the desired conclusion.
\end{proof}

\subsection{Proof of \texorpdfstring{\cref{thm:partial-isometry}}{Theorem 2.1}} \label{sec:partial-isometry-proof}

Let $\mat{A} = \mat{Q}\mat{R}$ be any decomposition which factors $\mat{A}$ as a product of a matrix $\mat{Q} \in \field^{n\times r}$ with orthonormal columns and a matrix $\mat{R} \in \field^{r\times d}$ with linearly independent rows, and extend $\mat{Q}$ to a square unitary matrix $[\begin{matrix} \mat{Q} & \mat{Q}_\perp \end{matrix}] \in \field^{n\times n}$.
Due to rotation invariance, 
\begin{equation*}
\mat{\varPsi} \coloneqq \twobyone{\mat{\varPsi}_1}{\mat{\varPsi}_2} = \twobyone{\mat{Q}^*}{\mat{Q}_{\perp}^*} \mat{\varOmega}
\end{equation*}
has the same distribution as $\mat{\varOmega}$, either Gaussian or random orthonormal.
In either case, 
\begin{equation*}
\rank(\mat{\varOmega}^*\mat{A})=\rank(\mat{A})=r
\end{equation*}
almost surely, and $\expect \bigl[(\mat{\varPsi}_1^*)^+ \mat{\varPsi}_2^*\bigr] = \mat{0}$.
\Cref{lem:sketch-solve-formula} implies that the difference between the sketch-and-solve solution $\Xhat = (\mat{\varOmega}^*\mat{A})^+(\mat{\varOmega}^*\mat{B})$ and the exact solution $\mat{X}_\star = \mat{A}^+\mat{B}$ is
\begin{equation*}
    \Xhat - \mat{X}_\star 
    = \mat{R}^+ (\mat{\varOmega}^*\mat{Q})^+(\mat{\varOmega}^*\mat{Q}_\perp^{\vphantom{*}}) \mat{Q}_\perp^*\mat{B}
    = \mat{R}^+ (\mat{\varPsi}_1^*)^+ \mat{\varPsi}_2^* \mat{Q}_\perp^*\mat{B},
\end{equation*}
which has expected value $\mat{0}$.
Therefore, we conclude that the sketch-and-solve solution is unbiased.
\Cref{lem:sketch-solve-formula} also implies the squared residual norm is exactly
\begin{equation*}
    \norm{\mat{B} - \mat{A}\Xhat}_{\rm F}^2
    = \norm{\smash{(\mat{\varOmega}^*\mat{Q})^+(\mat{\varOmega}^*\mat{Q}_\perp^{\vphantom{*}}) \mat{Q}_\perp^*\mat{B}}}_{\rm F}^2 + \norm{\mat{Q}_\perp^*\mat{B}}_{\rm F}^2 
    = \norm{(\mat{\varPsi}_1^*)^+ \mat{\varPsi}_2^* \mat{Q}_{\perp}^* \mat{B}}_{\rm F}^2 + \norm{\mat{Q}_\perp^*\mat{B}}_{\rm F}^2.
\end{equation*}
\Cref{prop:partial-isom,prop:gaussian-sketch} imply that
\begin{equation*}
    \begin{alignedat}{2}
        \expect \norm{(\mat{\varPsi}_1^*)^+ \mat{\varPsi}_2^* \mat{Q}_{\perp}^* \mat{B}}_{\rm F}^2 &= \frac{r}{\ell - r - \alpha_\field} \norm{\mat{Q}_\perp^*\mat{B}}_{\rm F}^2, & \quad \text{(Gaussian)} \\
        \text{or } \expect \norm{(\mat{\varPsi}_1^*)^+ \mat{\varPsi}_2^* \mat{Q}_{\perp}^* \mat{B}}_{\rm F}^2 &= 
        \frac{n-\ell}{n-r} \frac{r}{\ell - r - \alpha_\field} \norm{\mat{Q}_\perp^*\mat{B}}_{\rm F}^2. & \quad \text{(random orthonormal)}
    \end{alignedat}
\end{equation*}
Since $\norm{\mat{Q}_\perp^*\mat{B}}_{\rm F}^2 = \norm{\mat{B} - \mat{A}\mat{X}_{\star}}_{\rm F}^2$, this completes the proof.

\subsection{Proof of \texorpdfstring{\cref{thm:sketch-solve-optimality}}{Theorem 2.2}} \label{sec:proof_optimality}

Consider any inconsistent least-squares problem
\begin{equation}
\label{eq:original}
    \min_{\mat{X} \in \field^{d\times p}} \norm{\mat{B}_0 - \mat{A}_0 \mat{X}}_{\rm F}^2,
\end{equation}
involving matrices $\mat{A}_0 \in \field^{n \times d}$ and $\mat{B}_0 \in \field^{n \times p}$ with $\rank(\mat{A}_0) = r$.
Let $\mat{U}$ be an independent, random unitary matrix $\mat{U} \in \field^{n \times n}$ and set
\begin{equation*}
    \tilde{\mat{A}} = \mat{U} \mat{A}_0,
    \quad \tilde{\mat{B}} = \mat{U} \mat{B}_0
    \quad \text{and} \quad
    \tilde{\mat{\varOmega}} = \mat{U}^* \mat{\varOmega}.
\end{equation*}
Conditioning on $\mat{\varOmega}$, we have
$\rank(\tilde{\mat{\varOmega}}^*\mat{A}_0)=r$
almost surely.
If we apply sketch-and-solve with the embedding $\mat{\varOmega}$ to the random problem class
\begin{equation*}
    \min_{\mat{X} \in \field^{d\times p}} 
    \norm{\smash{\mat{\tilde{B}} - \mat{\tilde{A}}\mat{X}}}_{\rm F}^2,
\end{equation*}
this leads to the same estimator $\Xhat = (\mat{\varOmega}^* \tilde{\mat{A}})^+(\mat{\varOmega}^* \tilde{\mat{B}}) = (\tilde{\mat{\varOmega}}^* \mat{A}_0)^+(\tilde{\mat{\varOmega}}^* \mat{B}_0)$ as applying sketch-and-solve with the embedding $\tilde{\mat{\varOmega}}$ to the original problem \cref{eq:original}.

Now let $\mat{Q} \in \field^{n \times r}$ be any matrix with orthonormal columns that shares the same range as $\mat{A}_0$, and extend $\mat{Q}$ to a square unitary matrix $[\begin{matrix} \mat{Q} & \mat{Q}_\perp \end{matrix}] \in \field^{n\times n}$.
Further let $\mat{\varOmega} = \mat{\varPsi} \mat{R}$ be any decomposition which factors $\mat{\varOmega}$ as a product of a matrix $\mat{\varPsi} \in \field^{n\times \ell}$ with orthonormal columns and a full-rank matrix $\mat{R} \in \field^{\ell \times \ell}$.
Introduce the random orthonormal matrix
\begin{equation*}
    \tilde{\mat{\varPsi}} = \twobyone{\tilde{\mat{\varPsi}}_1}{\tilde{\mat{\varPsi}}_2}
    = \twobyone{\mat{Q}^*}{\mat{Q}_{\perp}^*} \mat{U}^* \mat{\varPsi} \in \field^{n \times \ell}.
\end{equation*}
\Cref{lem:sketch-solve-formula} implies the squared residual norm is exactly
\begin{align*}
    \norm{\smash{\mat{\tilde{B}} - \mat{\tilde{A}}\Xhat}}_{\rm F}^2 
    &= \norm{\mat{B}_0 - \mat{A}_0\Xhat}_{\rm F}^2 \\
    &= \norm{\smash{(\tilde{\mat{\varOmega}}^*\mat{Q})^+(\tilde{\mat{\varOmega}}^*\mat{Q}_\perp^{\vphantom{*}}) \mat{Q}_\perp^*\mat{B}_0}}_{\rm F}^2 + \norm{\mat{Q}_\perp^*\mat{B}_0}_{\rm F}^2 \\
    &= \norm{\smash{(\mat{R}^* \tilde{\mat{\varPsi}}_1^*)^+(\mat{R}^* \tilde{\mat{\varPsi}}_2^*) \mat{Q}_\perp^*\mat{B}_0}}_{\rm F}^2 + \norm{\mat{Q}_\perp^*\mat{B}_0}_{\rm F}^2.
\end{align*}
\Cref{prop:partial-isom} then implies that
\begin{equation*}
    \expect \norm{\smash{(\mat{R}^* \tilde{\mat{\varPsi}}_1^*)^+(\mat{R}^* \tilde{\mat{\varPsi}}_2^*) \mat{Q}_\perp^*\mat{B}_0}}_{\rm F}^2 
    \geq \frac{n-\ell}{n-r} \, \frac{r}{\ell - r - \alpha_\field} \, \norm{\mat{Q}_\perp^*\mat{B}_0}_{\rm F}^2.
\end{equation*}
Since $\norm{\mat{Q}_\perp^*\mat{B}_0}_{\rm F}^2 = \norm{\smash{\tilde{\mat{B}} - \tilde{\mat{A}} \tilde{\mat{A}}^+ \tilde{\mat{B}}}}_{\rm F}^2$, we conclude that the expected error is
\begin{equation*}
    \frac{\expect \norm{\smash{\tilde{\mat{B}} - \tilde{\mat{A}} (\mat{\varOmega}^* \tilde{\mat{A}})^+ (\mat{\varOmega}^* \tilde{\mat{B}})}}_{\rm F}^2}
    {\norm{\smash{\tilde{\mat{B}} - \tilde{\mat{A}} \tilde{\mat{A}}^+ \tilde{\mat{B}}}}_{\rm F}^2}
    \geq 1 + \frac{n-\ell}{n-r} \, \frac{r}{\ell - r - \alpha_\field}.
\end{equation*}
Last, by the probabilistic method, the worst-case matrices $\mat{A},\mat{B}$ must lead to at least the level of error incurred by the average-case random matrices $\mat{\tilde{A}},\mat{\tilde{B}}$.
\begin{equation*}
    \sup_{\substack{\mat{A}\in \field^{n\times d},\: \mat{B} \in \field^{n\times p} \\
    \rank(\mat{A}) = r,\,\mat{B}\neq\mat{A}\mat{A}^+\mat{B}}} \frac{\expect \norm{\smash{\mat{B} - \mat{A}(\mat{\varOmega}^*\mat{A})^+ (\mat{\varOmega}^*\mat{B})}}_{\rm F}^2}{\norm{\smash{\mat{B} - \mat{A} \mat{A}^+ \mat{B}}}_{\rm F}^2} 
    \ge \frac{\expect \norm{\smash{\tilde{\mat{B}} - \tilde{\mat{A}} (\mat{\varOmega}^* \tilde{\mat{A}})^+ (\mat{\varOmega}^* \tilde{\mat{B}})}}_{\rm F}^2}
    {\norm{\smash{\tilde{\mat{B}} - \tilde{\mat{A}} \tilde{\mat{A}}^+ \tilde{\mat{B}}}}_{\rm F}^2}.
\end{equation*}
This completes the proof.

\section{Analysis of low-rank approximations} \label{sec:rsvd-nystom-analysis}

This section analyzes algorithms for low-rank approximation.
\Cref{sec:schur} presents tools from the theory of Schur complements, and \cref{sec:hard_instance} introduces a hard problem class where the Schur complement is large and low-rank approximation struggles.
Using properties of Schur complements, \cref{sec:proof_rsvd} shows that this hard example upper bounds the error of low-rank approximation algorithms with rotation-invariant embeddings applied to all matrices, thus establishing an upper bound for the Nystr\"om approximation and randomized SVD.
Last,
\cref{sec:proof_rsvd_lower} establishes a matching lower bound for the Nystr\"om approximation and randomized SVD, and \cref{sec:proof_gen_nystrom_lower} establishes a matching lower bound for the generalized Nystr\"om approximation.

\subsection{Schur complements}
\label{sec:schur}

The residual of the Nystr\"om approximation is the \emph{Schur complement}
\begin{equation*}
    \mat{H} / \mat{\varOmega} = \mat{H} - \mat{H}\langle \mat{\varOmega}\rangle.
\end{equation*}
Here, we list a few properties of Nystr\"om approximation and Schur complement that we will need for our analysis; see \cite[sec.~1.5]{Bha07} and \cite{And05}.

\begin{fact}[Properties of Nystr\"om approximation and Schur complement] \label{fact:nystrom-properties}
    Let $\mat{\varOmega} \in \field^{n\times \ell}$ be an embedding, and let $\mat{H} \in \field^{n\times n}$ be a psd matrix.
    Then the Nystr\"om approximation and Schur complement satisfy the following properties. 
    \begin{enumerate}[label=(\alph*)]
        \item \textbf{Psd.}
        The Nystr\"om approximation $\mat{H}\langle \mat{\varOmega}\rangle$ and the Schur complement $\mat{H} / \mat{\varOmega}$ are psd. \label{item:nystrom-positive}     
        \item \textbf{Representation invariance.}
        For any nonsingular matrix $\mat{M}$, it holds that $\mat{H}\langle \mat{\varOmega}\mat{M}\rangle = \mat{H}\langle \mat{\varOmega}\rangle$. \label{item:nystrom-invariance}
        \item \label{item:nystrom-concave} \textbf{Concavity.}
        The map $\mat{H} \mapsto \mat{H} / \mat{\varOmega}$ is concave with respect to the psd order $\preccurlyeq$. If $\mat{H}_1,\ldots,\mat{H}_t$ are psd matrices and $\theta_1,\ldots,\theta_t$ are nonnegative parameters summing to one, then
        \begin{equation*}
            (\theta_1 \mat{H}_1 + \theta_2 \mat{H}_2 + \cdots + \theta_t \mat{H}_t)/ \mat{\varOmega} \succcurlyeq \theta_1 (\mat{H}_1/\mat{\varOmega}) + \theta_2 (\mat{H}_2/\mat{\varOmega}) + \cdots + \theta_t (\mat{H}_t / \mat{\varOmega}).
        \end{equation*}
        \item  \label{item:nystrom-monotone} \textbf{Monotonicity.}
        The map $\mat{H} \mapsto \mat{H} / \mat{\varOmega}$ is monotone with respect to the psd order $\preccurlyeq$,
        \begin{equation*}
            \mat{H}_1 \preccurlyeq \mat{H}_2 \implies \mat{H}_1 / \mat{\varOmega} \preccurlyeq \mat{H}_2 / \mat{\varOmega}.
        \end{equation*}
    \end{enumerate}
\end{fact}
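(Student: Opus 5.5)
The statement to prove is Fact~\ref{fact:nystrom-properties}: four standard properties of the Schur complement $\mat{H}/\mat{\varOmega} = \mat{H} - \mat{H}\mat{\varOmega}(\mat{\varOmega}^*\mat{H}\mat{\varOmega})^+\mat{\varOmega}^*\mat{H}$. My plan is to route everything through one variational characterization,
\begin{equation*}
    \vec{x}^*(\mat{H}/\mat{\varOmega})\vec{x} = \min_{\vec{y}\in\field^{\ell}} (\vec{x} - \mat{\varOmega}\vec{y})^*\mat{H}(\vec{x} - \mat{\varOmega}\vec{y}) \qquad \text{for all } \vec{x}\in\field^n,
\end{equation*}
and then read off the four properties from it.

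\textbf{Step 1: the characterization.} Factor $\mat{H} = \mat{F}^*\mat{F}$, for instance with $\mat{F} = \mat{H}^{1/2}$. A direct computation gives
\begin{equation*}
    \mat{H}\langle\mat{\varOmega}\rangle = \mat{F}^*\mat{F}\mat{\varOmega}(\mat{\varOmega}^*\mat{F}^*\mat{F}\mat{\varOmega})^+\mat{\varOmega}^*\mat{F}^*\mat{F} = \mat{F}^*\mat{\varPi}_{\mat{F}\mat{\varOmega}}\mat{F},
\end{equation*}
because $\mat{G}(\mat{G}^*\mat{G})^+\mat{G}^* = \mat{\varPi}_{\mat{G}}$ with $\mat{G} = \mat{F}\mat{\varOmega}$. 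Hence
\begin{equation*}
    \mat{H}/\mat{\varOmega} = \mat{F}^*(\Id - \mat{\varPi}_{\mat{F}\mat{\varOmega}})\mat{F}.
\end{equation*}
The quadratic form $\vec{x}^*(\mat{H}/\mat{\varOmega})\vec{x}$ equals $\norm{(\Id - \mat{\varPi}_{\mat{F}\mat{\varOmega}})\mat{F}\vec{x}}^2$. This is the squared distance from $\mat{F}\vec{x}$ to $\range(\mat{F}\mat{\varOmega})$, namely $\min_{\vec{y}}\norm{\mat{F}(\vec{x} - \mat{\varOmega}\vec{y})}^2$, which is the characterization.

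\textbf{Step 2: reading off the properties.}
\begin{itemize}
    \item Psd. $\mat{H}\langle\mat{\varOmega}\rangle = \mat{F}^*\mat{\varPi}_{\mat{F}\mat{\varOmega}}\mat{F}$ and $\mat{H}/\mat{\varOmega} = \mat{F}^*(\Id - \mat{\varPi}_{\mat{F}\mat{\varOmega}})\mat{F}$ are both congruences of orthogonal projections, so both are psd.
    \item Representation invariance. For nonsingular $\mat{M}$, $\range(\mat{F}\mat{\varOmega}\mat{M}) = \range(\mat{F}\mat{\varOmega})$, so the projection $\mat{\varPi}_{\mat{F}\mat{\varOmega}}$ is unchanged. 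Equivalently, the minimum in Step~1 depends only on $\range(\mat{\varOmega})$.
    \item Monotonicity. For each fixed $\vec{y}$, the objective $(\vec{x} - \mat{\varOmega}\vec{y})^*\mat{H}(\vec{x} - \mat{\varOmega}\vec{y})$ is nondecreasing in $\mat{H}$ for the psd order. A pointwise minimum of nondecreasing functions is nondecreasing, so $\mat{H}_1 \preccurlyeq \mat{H}_2$ gives $\mat{H}_1/\mat{\varOmega} \preccurlyeq \mat{H}_2/\mat{\varOmega}$.
    \item Concavity. For each fixed $\vec{y}$, the objective is linear in $\mat{H}$. A minimum of linear functions is concave, so for each $\vec{x}$,
    \begin{equation*}
        \vec{x}^*\Bigl(\bigl(\textstyle\sum_i\theta_i\mat{H}_i\bigr)/\mat{\varOmega}\Bigr)\vec{x} \ge \textstyle\sum_i\theta_i\,\vec{x}^*(\mat{H}_i/\mat{\varOmega})\vec{x}.
    \end{equation*}
    Since this holds for every $\vec{x}$, it is exactly the psd-order inequality in Fact~\ref{fact:nystrom-properties}\ref{item:nystrom-concave}.
\end{itemize}

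\textbf{Main obstacle.} The only delicate point is Step~1 when $\mat{\varOmega}^*\mat{H}\mat{\varOmega}$ is singular, i.e.\ when the pseudoinverse really is a pseudoinverse. The identity $\mat{G}(\mat{G}^*\mat{G})^+\mat{G}^* = \mat{\varPi}_{\mat{G}}$ holds for every matrix $\mat{G}$; one can verify it via the SVD of $\mat{G}$. So no rank assumption is needed and the argument goes through uniformly.
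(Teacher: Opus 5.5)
Your proof is correct. The paper itself gives no argument for this Fact; it only cites \cite[sec.~1.5]{Bha07} and \cite{And05}. So you are supplying a self-contained proof that the paper omits.

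The route in those references is the extremal (shorted-operator) characterization: $\mat{H}/\mat{\varOmega}$ is the largest psd matrix $\mat{X}$ with $\mat{X}\preccurlyeq\mat{H}$ and $\mat{X}\mat{\varOmega}=\mat{0}$. From it, both order properties are one-liners:
\begin{itemize}
\item \emph{Monotonicity:} $\mat{H}_1/\mat{\varOmega}$ is a feasible point for $\mat{H}_2$, so it lies below $\mat{H}_2/\mat{\varOmega}$.
\item \emph{Concavity:} $\sum_i\theta_i(\mat{H}_i/\mat{\varOmega})$ is a feasible point for $\sum_i\theta_i\mat{H}_i$.
\end{itemize}

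Your identity $\vec{x}^*(\mat{H}/\mat{\varOmega})\vec{x}=\min_{\vec{y}}(\vec{x}-\mat{\varOmega}\vec{y})^*\mat{H}(\vec{x}-\mat{\varOmega}\vec{y})$ is the pointwise counterpart of that characterization. In finite dimensions it is the standard way to prove maximality: if $\mat{X}\preccurlyeq\mat{H}$ and $\mat{X}\mat{\varOmega}=\mat{0}$, then $\vec{x}^*\mat{X}\vec{x}=(\vec{x}-\mat{\varOmega}\vec{y})^*\mat{X}(\vec{x}-\mat{\varOmega}\vec{y})$ is at most the objective for every $\vec{y}$. So the two routes are close relatives rather than competitors.

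Your version buys elementarity. Everything follows from the factorization $\mat{H}/\mat{\varOmega}=\mat{F}^*(\Id-\mat{\varPi}_{\mat{F}\mat{\varOmega}})\mat{F}$ and the identity $\mat{G}(\mat{G}^*\mat{G})^+\mat{G}^*=\mat{\varPi}_{\mat{G}}$. This handles a singular $\mat{\varOmega}^*\mat{H}\mat{\varOmega}$ uniformly, with no case analysis. It also yields representation invariance in the stronger form that $\mat{H}\langle\mat{\varOmega}\rangle$ depends only on $\range(\mat{\varOmega})$. The cited operator-theoretic framework buys a coordinate-free statement that extends to operators on Hilbert space, but the paper never needs that generality.
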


\subsection{Analysis of a hard instance} \label{sec:hard_instance}

This section calculates the error of the Nystr\"om approximation applied to a matrix with two distinct nonzero eigenvalues as the top eigenvalue is raised to infinity.

\begin{lemma}[Nystr\"om approximation of a matrix with two eigenvalues] \label{lem:two-eigenvalues}
    Let $\mat{\varOmega} \in \field^{n \times \ell}$ be any full-rank random embedding that satisfies $\mat{\varOmega} \sim \mat{U} \mat{\varOmega}$ for each unitary $\mat{U} \in \field^{n \times n}$.
    For any positive numbers $a,b > 0$ and any rank parameters $q, r \in \mathbb{N}$ with $q + \alpha_{\field} < \ell \leq r \leq n$, define a psd matrix
    \begin{equation*}
        \mat{H}_{a,b} = \begin{bmatrix}
            a\,\Id & & \\
            & b\,\Id & \\
            & & \mat{0}
        \end{bmatrix},
    \end{equation*}
    that is partitioned into the first $q$, the middle $r - q$, and the last $n - r$ coordinates.
    Then it follows
    \begin{equation*}
        \lim_{a \rightarrow \infty} \expect \tr\bigl(\mat{H}_{a,b} - \mat{H}_{a,b}\langle \mat{\varOmega} \rangle\bigr) 
        = b(r-\ell) \biggl(1 + \frac{q}{\ell - q - \alpha_\field}\biggr).
    \end{equation*}
\end{lemma}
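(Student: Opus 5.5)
The plan is to reduce to a Gaussian embedding, compute the Schur complement of $\mat{H}_{a,b}$ explicitly in the limit $a\to\infty$, and evaluate the resulting expectation with \cref{fact:wishart}.

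\textbf{Step 1: reduce to a Gaussian embedding.} Write $\mat{\varOmega} = \mat{\varPsi}\mat{R}$ with $\mat{\varPsi} = \mat{\varOmega}(\mat{\varOmega}^*\mat{\varOmega})^{-1/2}$. Because $\mat{\varOmega}$ is full-rank and rotation-invariant, $\mat{\varPsi}$ is a random orthonormal matrix. By representation invariance (\cref{fact:nystrom-properties}\ref{item:nystrom-invariance}), $\mat{H}\langle\mat{\varOmega}\rangle = \mat{H}\langle\mat{\varPsi}\rangle$, so the error depends on $\mat{\varOmega}$ only through the law of $\mat{\varPsi}$. This law is the same for every full-rank rotation-invariant embedding. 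Hence we may assume $\mat{\varOmega}$ is a Gaussian embedding, which gives independence between blocks.

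\textbf{Step 2: explicit limit of the Schur complement.} Use $\mat{H}\langle\mat{\varOmega}\rangle = \mat{H}^{1/2}\mat{\varPi}_{\mat{H}^{1/2}\mat{\varOmega}}\mat{H}^{1/2}$, so that
\[
\tr(\mat{H}_{a,b}/\mat{\varOmega}) = \norm{(\Id-\mat{\varPi}_{\mat{H}^{1/2}\mat{\varOmega}})\mat{H}^{1/2}}_{\rm F}^2,
\qquad
\mat{H}^{1/2}\mat{\varOmega} = \twobyone{\sqrt a\,\mat{\varOmega}_1}{\sqrt b\,\mat{\varOmega}_2}.
\]
Here $\mat{\varOmega}_1$ is the first $q$ rows of $\mat{\varOmega}$ and $\mat{\varOmega}_2$ is the middle $r-q$ rows. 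Pick a unitary $\mat{M}\in\field^{\ell\times\ell}$, depending only on $\mat{\varOmega}_1$, with $\mat{\varOmega}_1\mat{M} = [\begin{matrix}\mat{C} & \mat{0}\end{matrix}]$ and $\mat{C}\in\field^{q\times q}$ invertible almost surely. Write $\mat{\varOmega}_2\mat{M} = [\begin{matrix}\mat{X} & \mat{Y}\end{matrix}]$ with $\mat{X}\in\field^{(r-q)\times q}$ and $\mat{Y}\in\field^{(r-q)\times(\ell-q)}$.

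Since $\mat{M}$ is unitary and depends only on the first $q$ rows, rotation invariance of Gaussian rows implies that $\mat{X}$, $\mat{Y}$ and $\mat{C}$ are mutually independent. Moreover $\mat{X}$ and $\mat{Y}$ are standard Gaussian, and $\mat{C}$ has the same singular values as $\mat{\varOmega}_1$. The range of $\mat{H}^{1/2}\mat{\varOmega}\mat{M}$ is spanned by the columns of
\[
\twobytwo{\Id}{\mat{0}}{\sqrt{b/a}\,\mat{X}\mat{C}^{-1}}{\mat{Y}}
\]
in the first $r$ coordinates.

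A direct computation with this basis should give
\[
\lim_{a\to\infty}\tr(\mat{H}_{a,b}/\mat{\varOmega}) = b\Bigl[\tr(\Id-\mat{\varPi}_{\mat{Y}}) + \norm{(\Id-\mat{\varPi}_{\mat{Y}})\mat{X}\mat{C}^{-1}}_{\rm F}^2\Bigr].
\]
The first term is the middle block's residual. The second term comes from the $a$-weighted first block, whose orthogonal defect is of order $b/a$ and is multiplied by $a$. Because $\mat{Y}$ has full column rank $\ell-q \le r-q$ almost surely, $\tr(\Id-\mat{\varPi}_{\mat{Y}}) = r-\ell$.

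\textbf{Step 3: expectation.} Condition on $\mat{Y}$ and $\mat{C}$. Since $\mat{X}$ is an independent standard Gaussian, for any fixed projection $\mat{P}$ and fixed matrix $\mat{D}$ we have $\expect\norm{\mat{P}\mat{X}\mat{D}}_{\rm F}^2 = \tr(\mat{P})\norm{\mat{D}}_{\rm F}^2$. This yields
\[
\expect\norm{(\Id-\mat{\varPi}_{\mat{Y}})\mat{X}\mat{C}^{-1}}_{\rm F}^2 = (r-\ell)\,\expect\tr\bigl((\mat{\varOmega}_1\mat{\varOmega}_1^*)^{-1}\bigr).
\]
Now $\mat{\varOmega}_1\mat{\varOmega}_1^*\sim\Wishart_\field(q,\ell)$ with $q+\alpha_\field<\ell$, so \cref{fact:wishart} gives $\expect\tr\bigl((\mat{\varOmega}_1\mat{\varOmega}_1^*)^{-1}\bigr) = q/(\ell-q-\alpha_\field)$. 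Summing the two terms gives the claimed value $b(r-\ell)\bigl(1+q/(\ell-q-\alpha_\field)\bigr)$.

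\textbf{Main obstacle: exchanging limit and expectation.} Since $\mat{H}_{a,b}$ is increasing in $a$ in the psd order, monotonicity (\cref{fact:nystrom-properties}\ref{item:nystrom-monotone}) makes $\tr(\mat{H}_{a,b}/\mat{\varOmega})$ nondecreasing in $a$. The monotone convergence theorem then justifies the interchange, with no integrability assumption needed beyond that of the limit, which Step 3 shows is finite. The pointwise limit in Step 2 is routine but needs care. I would verify it by orthogonalizing the first $q$ basis columns against the $\mat{Y}$ columns and expanding to first order in $\sqrt{b/a}$.
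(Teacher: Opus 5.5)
Your proposal is correct. It shares the paper's skeleton: representation invariance, a unitary change of variables that triangularizes the top $q$ rows, a pointwise limit as $a\to\infty$, and monotone convergence. The difference is in how you evaluate the limit.

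\textbf{The paper's route.} It stays with the orthonormalized top $r$ rows $\mat{\varPsi}\in\field^{r\times\ell}$ and takes a QR factorization $\mat{\varPsi}^* = \mat{Q}\mat{R}$ whose factor $\mat{R}$ has orthonormal rows. This gives the error in closed form for every finite $a$, with limit $b\tr\bigl((\mat{R}_1\mat{R}_1^*)^{-1}\bigr) + b(r-\ell-q)$. It finishes with the inverse-Beta formula (\cref{prop:beta_exp}) applied to $\mat{\varPsi}_1\mat{\varPsi}_1^*\sim\Beta_\field(q,\ell,r)$.

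\textbf{Your route.} You pass to a Gaussian representative, which makes $\mat{X}$, $\mat{Y}$, $\mat{C}$ independent. You then need only \cref{fact:wishart} and the identity $\expect[\mat{X}^*\mat{P}\mat{X}] = \tr(\mat{P})\,\Id$.

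The two limits are the same random variable in different coordinates. A block-inverse computation gives $\tr\bigl((\mat{\varPsi}_1\mat{\varPsi}_1^*)^{-1}\bigr) = q + \norm{(\Id-\mat{\varPi}_{\mat{Y}})\mat{X}\mat{C}^{-1}}_{\rm F}^2$. Moreover, $\mat{X}^*(\Id-\mat{\varPi}_{\mat{Y}})\mat{X}\sim\Wishart_\field(q,r-\ell)$ is independent of $\mat{C}$. So your Step~3 is the $\tr(\mat{W}_1^{-1}\mat{W}_2)$ computation from the proof of \cref{prop:beta_exp} in disguise.

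Your version is more self-contained and makes the error interpretable: the middle block's own residual $r-\ell$, plus leakage from the dominant block. You also justify the limit--expectation interchange more explicitly than the paper, by citing monotonicity of the Schur complement. The paper's version reuses Beta machinery it needs anyway for sketch-and-solve, and gives an exact formula at every finite $a$.

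One step deserves tightening. Step~2 is asserted rather than derived, and no first-order expansion is needed. Let $\mat{E}_1 = \twobyone{\Id}{\mat{0}}$, $\mat{E}_2 = \twobyone{\mat{0}}{\Id}$, and let $\mat{\varPi}_a$ be the projector onto the span of your basis. The block $\mat{E}_1 + \sqrt{b/a}\,\mat{E}_2\mat{X}\mat{C}^{-1}$ lies in that span, so $(\Id-\mat{\varPi}_a)\mat{E}_1 = -\sqrt{b/a}\,(\Id-\mat{\varPi}_a)\mat{E}_2\mat{X}\mat{C}^{-1}$. Hence the $a$-weighted term equals $b\norm{(\Id-\mat{\varPi}_a)\mat{E}_2\mat{X}\mat{C}^{-1}}_{\rm F}^2$ exactly. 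At $a=\infty$ the basis still has full column rank $\ell$ almost surely, since $\ell\le r$. Therefore $\mat{\varPi}_a\to\diag(\Id,\mat{\varPi}_{\mat{Y}})$ by continuity of the projector, and both terms converge to the expressions in your formula.
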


\begin{proof}
    Our first step reduces the dimensionality of $\mat{H}_{a,b}$ to $r \times r$.
    We write 
    \begin{equation*}
    \mat{\varOmega} = \begin{bmatrix} \mat{\varOmega}_1 \\ \mat{\varOmega}_2 \\ \mat{\varOmega}_3 \end{bmatrix},
    \end{equation*} 
    where we have partitioned the first $q$, the middle $r-q$, and the last $n-r$ rows of $\mat{\varOmega}$, and we observe that
    \begin{equation*}
        \mat{H}_{a,b}\langle \mat{\varOmega} \rangle 
        = \begin{bmatrix} a \mat{\varOmega}_1 \\ b \mat{\varOmega}_2 \\ \mat{0}
        \end{bmatrix}
        (a\,\mat{\varOmega}_1^* \mat{\varOmega}_1^{\vphantom{*}} + b\,\mat{\varOmega}_2^* \mat{\varOmega}_2^{\vphantom{*}})^{-1}
        \begin{bmatrix} a \mat{\varOmega}_1^* & b \mat{\varOmega}_2^* & \mat{0} \end{bmatrix}
        = \begin{bmatrix} \begin{bmatrix} a\,\Id & \mat{0} \\
        \mat{0} & b\,\Id
        \end{bmatrix} \left\langle \begin{bmatrix} \mat{\varOmega}_1 \\ \mat{\varOmega}_2 \end{bmatrix} \right\rangle & \mat{0} \\
        \mat{0} & \mat{0}
        \end{bmatrix}.
    \end{equation*}
    Next we introduce the random orthonormal matrix 
    \begin{equation*}
    \mat{\varPsi} = \begin{bmatrix} \mat{\varPsi}_1 \\ \mat{\varPsi}_2 \end{bmatrix}
    = \begin{bmatrix} \mat{\varOmega}_1 \\ \mat{\varOmega}_2 \end{bmatrix} \bigl(\mat{\varOmega}_1^* \mat{\varOmega}_1^{\vphantom{*}} + \mat{\varOmega}_2^* \mat{\varOmega}_2^{\vphantom{*}}\bigr)^{-1/2} \in \field^{r \times \ell}.
    \end{equation*}
    We also take a QR decomposition
    \begin{equation*}
    \mat{\varPsi}^* = \begin{bmatrix} \mat{\varPsi}_1^* & \mat{\varPsi}_2^* \end{bmatrix}
    = \mat{Q} \mat{R}
    = \mat{Q} \begin{bmatrix} \mat{R}_1 & \mat{R}_2 \\ \mat{0} & \mat{R}_{\perp} \end{bmatrix} \in \field^{\ell \times r},
    \end{equation*}
    where $\mat{Q} \in \field^{\ell \times \ell}$ is unitary and $\mat{R} \in \field^{\ell \times r}$ is upper triangular.
    Since $\mat{\varPsi}$ and $\mat{Q}$ have orthonormal columns, $\mat{R}$ must have orthonormal rows and therefore
    \begin{equation}
    \label{eq:orth_rows}
    	\begin{bmatrix} \Id & \mat{0} \\ \mat{0} & \Id \end{bmatrix}
	= \begin{bmatrix} \mat{R}_1 & \mat{R}_2 \\ \mat{0} & \mat{R}_{\perp} \end{bmatrix}
	\begin{bmatrix} \mat{R}_1^* & \mat{0} \\
	\mat{R}_2^* & \mat{R}_{\perp}^* \end{bmatrix}
	= \begin{bmatrix} \mat{R}_1^{\vphantom{*}} \mat{R}_1^* + \mat{R}_2^{\vphantom{*}} \mat{R}_2^* & \mat{R}_2^{\vphantom{*}} \mat{R}_{\perp}^* \\ \mat{R}_{\perp} \mat{R}_2^* & \mat{R}_{\perp}^{\vphantom{*}} \mat{R}_{\perp}^* \end{bmatrix}.    \end{equation}
    By \cref{fact:nystrom-properties}\ref{item:nystrom-invariance}, we observe that
    \begin{equation*}
        \begin{bmatrix} a\,\Id & \mat{0} \\
        \mat{0} & b\,\Id
        \end{bmatrix} \left\langle \begin{bmatrix} \mat{\varOmega}_1 \\ \mat{\varOmega}_2 \end{bmatrix} \right\rangle
        = \begin{bmatrix} a\,\Id & \mat{0} \\
        \mat{0} & b\,\Id
        \end{bmatrix} \langle \mat{\varPsi} \rangle
        = \begin{bmatrix} a\,\Id & \mat{0} \\
        \mat{0} & b\,\Id
        \end{bmatrix} \langle \mat{R}^* \rangle.
    \end{equation*}
    Now we make an explicit calculation using \cref{eq:orth_rows}.
    \begin{align*}
         \begin{bmatrix} a\,\Id & \mat{0} \\
        \mat{0} & b\,\Id
        \end{bmatrix} \langle \mat{R}^* \rangle
        &= \begin{bmatrix}
        a \mat{R}^*_1 & \mat{0} \\
        b \mat{R}^*_2 & b \mat{R}^*_{\perp}
        \end{bmatrix}
        \begin{bmatrix}
            a \mat{R}_1 \mat{R}_1^* + b \mat{R}_2 \mat{R}_2^* & \mat{0} \\
            \mat{0} & b \Id
        \end{bmatrix}^{-1}
        \begin{bmatrix}
        a \mat{R}_1 & b \mat{R}_2
        \\ \mat{0} & b \mat{R}_{\perp}
        \end{bmatrix} \\
        &= \begin{bmatrix} a \mat{R}_1^* \\
        b \mat{R}_2^* \end{bmatrix}
        (a \mat{R}_1^{\vphantom{*}} \mat{R}_1^* + b \mat{R}_2^{\vphantom{*}} \mat{R}^*_2)^{-1}
        \begin{bmatrix} a \mat{R}_1 &
        b \mat{R}_2 \end{bmatrix}
        + \begin{bmatrix} \mat{0} & \mat{0} \\
        \mat{0} & b \mat{R}_{\perp}^* \mat{R}_{\perp}^{\vphantom{*}}
        \end{bmatrix}.
    \end{align*}
    For $a > b$, another explicit calculation using \cref{eq:orth_rows} yields
    \begin{align*}
        & \tr\Biggl(\begin{bmatrix} a\,\Id & \mat{0} \\
        \mat{0} & b\,\Id
        \end{bmatrix}
        - \begin{bmatrix} a\,\Id & \mat{0} \\
        \mat{0} & b\,\Id
        \end{bmatrix} \langle \mat{R}^* \rangle\Biggr) \\
        &= aq + b(r-q) - \tr\bigl( (a^2 \mat{R}_1^{\vphantom{*}} \mat{R}_1^* + b^2 \mat{R}_2^{\vphantom{*}} \mat{R}_2^*) (a \mat{R}_1^{\vphantom{*}} \mat{R}_1^* + b \mat{R}_2^{\vphantom{*}} \mat{R}_2^*)^{-1} \bigr) - b(\ell - q) \\
        &= (a-b) \tr\bigl( b\mat{R}_2^{\vphantom{*}} \mat{R}_2^* (a \mat{R}_1^{\vphantom{*}} \mat{R}_1^* + b \mat{R}_2^{\vphantom{*}} \mat{R}_2^*)^{-1}\bigr) + b(r-\ell) \\
        &= \tr\bigl( b\mat{R}_2^{\vphantom{*}} \mat{R}_2^* (\mat{R}_1^{\vphantom{*}} \mat{R}_1^* + \tfrac{b}{a-b} \mat{R}_2^{\vphantom{*}} \mat{R}_2^*)^{-1}\bigr) + b(r-\ell).
    \end{align*}
    This expression for the Nystr\"om approximation error simplifies as we take the limit $a \rightarrow \infty$.
    \begin{align*}
        \lim_{a \rightarrow \infty} \tr\bigl( b\mat{R}_2^{\vphantom{*}} \mat{R}_2^* (\mat{R}_1^{\vphantom{*}} \mat{R}_1^* + \tfrac{b}{a-b} \mat{R}_2^{\vphantom{*}} \mat{R}_2^*)^{-1}\bigr) + b(r-\ell)
        &= b \tr \bigl(\mat{R}_2^{\vphantom{*}} \mat{R}_2^* (\mat{R}_1^{\vphantom{*}} \mat{R}_1^*)^{-1}\bigr) + b(r-\ell) \\
        &= b \tr\bigl( (\mat{R}_1^{\vphantom{*}} \mat{R}_1^*)^{-1}\bigr) + b(r-\ell - q).
    \end{align*}
    Finally, apply the monotone convergence theorem and use the formula for $\expect \tr(\mat{R}_1^*\mat{R}_1^{\vphantom{*}})^{-1}$ from \cref{prop:beta_exp} to yield
    \begin{align*}
    	\lim_{a \rightarrow \infty} \expect \tr\bigl(\mat{H}_{a,b} - \mat{H}_{a,b}\langle \mat{\varOmega} \rangle\bigr)
        &= b \expect \tr(\mat{R}_1^{\vphantom{*}} \mat{R}_1^*)^{-1} + b(r-\ell-q) \\
        &= b q \left(1 + \frac{r -\ell}{\ell - q - \alpha_\field}\right) + b(r-\ell-q)
        = b(r-\ell) \biggl(1 + \frac{q}{\ell - q - \alpha_\field}\biggr).
    \end{align*}
    This completes the proof.
\end{proof}

\subsection{Proof of \texorpdfstring{\cref{thm:rsvd-upper-bound}}{Theorem 3.2} and  \texorpdfstring{\cref{thm:nystrom}\ref{item:nystrom-result-upper}}{Theorem 3.6(a)}} \label{sec:proof_rsvd}

Introduce an eigendecomposition
\begin{equation*}
    \mat{H} = \mat{Q} \mat{\varLambda} \mat{Q}^*,
\end{equation*}
where $\mat{Q} \in \field^{n \times n}$ is unitary and $\mat{\varLambda} = \diag(\lambda_1,\ldots,\lambda_r,0,\ldots,0)$ for $\lambda_1 \geq \cdots \geq \lambda_r > 0$.
By rotational invariance, it holds that $\mat{Q}^* \mat{\varOmega} \sim \mat{\varOmega}$ and thus
\begin{equation*}
    (\mat{Q} \mat{\varLambda} \mat{Q}^*)\langle \mat{\varOmega} \rangle
    = \mat{Q} \bigl(\mat{\varLambda}\langle \mat{Q}^*\mat{\varOmega}\rangle\bigr) \mat{Q}^*
    \sim \mat{Q} \bigl(\mat{\varLambda}\langle \mat{\varOmega} \rangle\bigr) \mat{Q}^*.
\end{equation*}
Similarly, if $\mat{P} \in \{0, 1\}^{n \times n}$ is a permutation matrix, independent from $\mat{\varOmega}$, then $\mat{P}^* \mat{\varOmega} \sim \mat{\varOmega}$ and thus
\begin{equation*}
    \mat{P}^* \bigl(\mat{\varLambda}\langle \mat{\varOmega} \rangle \bigr) \mat{P}
    = \bigl(\mat{P}^* \mat{\varLambda} \mat{P}\bigr) \langle \mat{P}^* \mat{\varOmega} \rangle
    \sim \bigl(\mat{P}^* \mat{\varLambda} \mat{P}\bigr) \langle \mat{\varOmega} \rangle.
\end{equation*}
Consequently, the Schur complements satisfy
\begin{equation*}
    \expect \tr\bigl(\mat{H} / \mat{\varOmega} \bigr)
    = \expect \tr\bigl(\mat{\varLambda} / \mat{\varOmega} \bigr)
    = \expect \tr\bigl((\mat{P}^* \mat{\varLambda} \mat{P})
    / \mat{\varOmega}\bigr).
\end{equation*}
Thus we can permute the diagonal entries however we want without changing the expectation.
Specifically, we can introduce a permutation $\mat{P} \in \{0, 1\}^{n \times n}$ that permutes the first $q$ coordinates, permutes the middle $r-q$ coordinates, and permutes the last $n - r$ coordinates, each uniformly at random.
It follows that $\expect \bigl[\mat{P}^* \mat{\varLambda} \mat{P}\bigr] = \mat{H}_{a_0, b_0}$, where
\begin{equation*}
    \mat{H}_{a,b}
    = \begin{bmatrix}
        a\,\Id & & \\
        & b\,\Id & \\
        & & \mat{0}
    \end{bmatrix},
    \quad
    a_0 = \frac{1}{q} \sum_{i=1}^q \lambda_i = \frac{\tr(\lowrank{\mat{H}}_q)}{q} 
    \quad \text{and}
    \quad
    b_0
    = \frac{1}{r-q} \sum_{i=q+1}^r \lambda_i
    = \frac{\tr(\mat{H} - \lowrank{\mat{H}}_q)}{r-q}.
\end{equation*}
We next use the concavity of the Schur complement (\cref{fact:nystrom-properties}\ref{item:nystrom-concave}) to compute
\begin{equation*}
    \expect\bigl[(\mat{P}^* \mat{\varLambda} \mat{P})
    / \mat{\varOmega}\,\big|\, \mat{\varOmega}\bigr]
    \preceq
    \expect\bigl[\mat{P}^* \mat{\varLambda} \mat{P}\bigr]
    / \mat{\varOmega}.
\end{equation*}
Last, we take the expected trace, use the monotonicity of the Schur complement (\cref{fact:nystrom-properties}\ref{item:nystrom-monotone}), and apply the analysis of $\mat{H}_{a,b}$ from \cref{lem:two-eigenvalues}.
\begin{align*}
    \expect \tr\bigl(\mat{H}/\mat{\varOmega}\bigr)
    &\leq \expect \tr\bigl(\mat{H}_{a_0, b_0} / \mat{\varOmega}\bigr) \\
    &\leq \lim_{a \rightarrow \infty} \expect \tr\bigl(\mat{H}_{a, b_0} / \mat{\varOmega}\bigr) \\
    &= b_0(r-\ell) \biggl(1 + \frac{q}{\ell - q - \alpha_\field}\biggr) \\
    &= \frac{r-\ell}{r-q} \left(1 + \frac{q}{\ell-q-\alpha_\field} \right) \tr\bigl(\mat{H} - \lowrank{\mat{H}}_q\bigr).
\end{align*}
We have proved \cref{thm:nystrom}\ref{item:nystrom-result-upper}.
By invoking the Gram correspondence (\cref{fact:gram-correspondence}), \cref{thm:rsvd-upper-bound} follows.

\subsection{Proof of \texorpdfstring{\cref{thm:rsvd-lower-bound}}{Theorem 3.3} and \texorpdfstring{\cref{thm:nystrom}\ref{item:nystrom-result-lower}}{Theorem 3.6(b)}} \label{sec:proof_rsvd_lower}

For each positive number $a > 0$, introduce the psd matrix
\begin{equation*}
    \mat{H}_a = \begin{bmatrix}
        a\,\Id & & \\
        & \Id & \\
        & & \mat{0}
    \end{bmatrix}
\end{equation*}
which is partitioned into the first $q$, the middle $r - q$, and the last $n - r$ coordinates.
Further, introduce a random unitary matrix $\mat{U} \in \field^{n\times n}$, independent from $\mat{\varOmega}$, and set
\begin{equation*}
    \mat{\tilde{H}}_a = \mat{U} \mat{H}_a \mat{U}^*
    \quad \text{and} \quad \mat{\tilde{\varOmega}} = \mat{U}^* \mat{\varOmega}.
\end{equation*}
We will study the error from applying the randomized Nystr\"om approximation with the embedding $\mat{\varOmega}$ to the random matrix $\mat{\tilde{H}}_a$.
Notice the Schur complement satisfies
\begin{equation*}
    \tr\bigl(\mat{\tilde{H}}_a / \mat{\varOmega}\bigr)
    = \tr\bigl( \mat{H}_a / \mat{\tilde{\varOmega}} \bigr).
\end{equation*}
\Cref{lem:two-eigenvalues} implies that
\begin{equation*}
    \lim_{a \rightarrow \infty} 
    \frac{\expect \tr\bigl(\mat{\tilde{H}}_a / \mat{\varOmega}\bigr)}{\tr\bigl(\mat{H}_a - \lowrank{\mat{H}_a}_q\bigr)} 
    = \lim_{a \rightarrow \infty} 
    \frac{\expect \tr\bigl(\mat{H}_a / \mat{\tilde{\varOmega}}\bigr)}{\tr\bigl(\mat{H}_a - \lowrank{\mat{H}_a}_q\bigr)} 
    = \frac{r-\ell}{r-q}\, \left(1 + \frac{q}{\ell - q - \alpha_\field}\right).
\end{equation*}
Last, by the probabilistic method, the worst-case matrix $\mat{H}$ must lead to at least the level of error incurred by the average-case random matrix $\mat{H}_a$ for any $a > 0$.
\begin{equation*}
    \sup_{\substack{\mat{H} \in \field^{n\times n} \text{ psd} \\ \rank(\mat{H}) = r}} \frac{\expect \tr(\mat{H} / \mat{\varOmega})}{\tr(\mat{H} - \lowrank{\mat{H}}_q)}
    \geq \lim_{a \rightarrow \infty} 
    \frac{\expect \tr\bigl(\mat{\tilde{H}}_a / \mat{\varOmega}\bigr)}{\tr\bigl(\mat{H}_a - \lowrank{\mat{H}_a}_q\bigr)} .
\end{equation*}
We have proved \cref{thm:nystrom}\ref{item:nystrom-result-lower}.
By invoking the Gram correspondence (\cref{fact:gram-correspondence}), \cref{thm:rsvd-lower-bound} follows.

\subsection{Proof of \texorpdfstring{\cref{thm:gen_nystrom}\ref{item:gen-nystrom-result-lower}}{Theorem 4.2(b)}} \label{sec:proof_gen_nystrom_lower}

For each positive number $a > 0$, introduce the $d \times n$ matrix
\begin{equation*}
    \mat{H}_a = \begin{bmatrix}
        a\,\Id & & \\
        & \Id & \\
        & & \mat{0}
    \end{bmatrix}
\end{equation*}
which is partitioned into the first $q$, the middle $r - q$, and the last $\min\{d,n\} - r$ coordinates.
Let $\mat{U}\in\field^{d\times d}$ and
$\mat{V}\in\field^{n\times n}$ be independent random unitary matrices,
independent of $(\mat{\varOmega},\mat{\varPsi})$, and define
\begin{equation*}
    \tilde{\mat{H}}_a=\mat{U}\mat{H}_a\mat{V}^*,
    \qquad
    \tilde{\mat{\varOmega}}=\mat{V}^*\mat{\varOmega},
    \qquad
    \tilde{\mat{\varPsi}}=\mat{U}^*\mat{\varPsi}.
\end{equation*}
Conditioning on $(\mat{\varOmega},\mat{\varPsi})$, we have
\begin{equation*}
    \rank(\mat{H}_a\tilde{\mat{\varOmega}}) =
    \rank(\tilde{\mat{\varPsi}}^*\mat{H}_a\tilde{\mat{\varOmega}})
    = \ell
    \quad\text{almost surely}.
\end{equation*}
We will study the error from applying the generalized Nystr\"om approximation with the embeddings $\mat{\varOmega}$ and $\mat{\varPsi}$ to the random matrix $\mat{\tilde{H}}_a$.
First, by unitary invariance of the Frobenius norm,
\begin{equation*}
    \norm{\tilde{\mat{H}}_a
    - \tilde{\mat{H}}_a\langle \mat{\varOmega}, \mat{\varPsi} \rangle}_{\rm F}^2
    = \norm{\mat{H}_a
    - \mat{H}_a\langle \tilde{\mat{\varOmega}}, \tilde{\mat{\varPsi}} \rangle}_{\rm F}^2.
\end{equation*}
Next we view $\norm{\mat{H}_a
- \mat{H}_a\langle \tilde{\mat{\varOmega}}, \tilde{\mat{\varPsi}} \rangle}_{\rm F}^2$ as the squared error from applying sketch-and-solve with the random embedding $\tilde{\mat{\varPsi}}$ to the problem
\begin{equation*}
    \min_{\mat{X} \in \field^{\ell \times n}} \norm{\mat{H}_a
    - \bigl(\mat{H}_a \tilde{\mat{\varOmega}}\bigr) \mat{X}}_{\rm F}^2.
\end{equation*}
The proof of the sketch-and-solve lower bound in \cref{sec:proof_optimality} shows that
\begin{equation*}
    \expect \norm{\mat{H}_a
    - \mat{H}_a\langle \tilde{\mat{\varOmega}}, \tilde{\mat{\varPsi}} \rangle}_{\rm F}^2
    \geq \left(1 + \frac{d-k}{d-\ell} \frac{\ell}{k - \ell - \alpha_\field} \right) \expect \norm{\mat{H}_a
    - \bigl(\mat{H}_a \tilde{\mat{\varOmega}}\bigr)
    \bigl(\mat{H}_a \tilde{\mat{\varOmega}}\bigr)^+ \mat{H}_a }_{\rm F}^2.
\end{equation*}
Next rewrite the error in the form of a Nystr\"om approximation
\begin{equation*}
    \norm{\mat{H}_a
    - \bigl(\mat{H}_a \tilde{\mat{\varOmega}}\bigr)
    \bigl(\mat{H}_a \tilde{\mat{\varOmega}}\bigr)^+ \mat{H}_a }_{\rm F}^2
    = \tr\bigl((\mat{H}_a^* \mat{H}_a^{\vphantom{*}}) / \tilde{\mat{\varOmega}}\bigr).
\end{equation*}
The proof of the randomized Nystr\"om lower bound in \cref{sec:proof_rsvd_lower} shows that
\begin{equation*}
    \lim_{a \rightarrow \infty} \expect\Biggl[ \frac{\tr\bigl((\mat{H}_a^* \mat{H}_a^{\vphantom{*}}) / \tilde{\mat{\varOmega}}\bigr)}{\tr\bigl(\mat{H}_a^* \mat{H}_a^{\vphantom{*}} - \lowrank{\smash{\mat{H}_a^* \mat{H}_a^{\vphantom{*}}}}_q\bigr)}\Biggr] = \frac{r-\ell}{r-q}\, \left(1 + \frac{q}{\ell - q - \alpha_\field}\right).
\end{equation*}
Since $\tr\bigl(\mat{H}_a^* \mat{H}_a - \lowrank{\smash{\mat{H}_a^* \mat{H}_a}}_q\bigr) = \norm{\smash{\tilde{\mat{H}}_a - \lowrank{\smash{\tilde{\mat{H}}_a}}_q}}_{\rm F}^2$, we have shown that
\begin{equation*}
    \limsup_{a \rightarrow \infty} 
    \frac{\expect \norm{\smash{\tilde{\mat{H}}_a
    - \tilde{\mat{H}}_a\langle \mat{\varOmega}, \mat{\varPsi} \rangle}}_{\rm F}^2}
    {\norm{\smash{\tilde{\mat{H}}_a - \lowrank{\smash{\tilde{\mat{H}}_a}}_q}}_{\rm F}^2}
    \geq \left(1 + \frac{d-k}{d-\ell} \frac{\ell}{k - \ell - \alpha_\field} \right)
    \frac{r-\ell}{r-q} \left( 1 + \frac{q}{\ell-q-\alpha_\field} \right).
\end{equation*}
Last, by the probabilistic method, the worst-case matrix $\mat{A}$ must lead to at least the level of error incurred by the average-case random matrix $\tilde{\mat{H}}_a$ for any $a > 0$.
\begin{equation*}
    \sup_{\substack{\mat{A} \in \field^{d\times n} \\ \rank(\mat{A}) = r}} 
    \frac{\expect \norm{\smash{\mat{A} - \mat{A}\langle\mat{\varOmega},\mat{\varPsi}\rangle}}_{\rm F}^2}
    {\norm{\smash{\mat{A} - \lowrank{\mat{A}}_q}}_{\rm F}^2} 
    \geq \limsup_{a \rightarrow \infty} \frac{\expect \norm{\smash{\tilde{\mat{H}}_a
    - \tilde{\mat{H}}_a\langle \mat{\varOmega}, \mat{\varPsi} \rangle}}_{\rm F}^2}
    {\norm{\smash{\tilde{\mat{H}}_a - \lowrank{\smash{\tilde{\mat{H}}_a}}_q}}_{\rm F}^2}.
\end{equation*}
We have proved \cref{thm:gen_nystrom}\ref{item:gen-nystrom-result-lower}.

\section{Conclusions and open problems} \label{sec:conclusion}

This work confirms the widespread belief that a random orthonormal matrix is the optimal embedding for randomized matrix approximations.
First, we have established sharp upper bounds for sketch-and-solve and several low-rank approximations using a random orthonormal matrix.
Next, we have obtained matching lower bounds to show that no other embedding can improve the worst-case theoretical performance of these algorithms.

We also submit a stronger conjecture that
no algorithm based on a random embedding
can improve on
sketch-and-solve or low-rank approximation algorithms with a random orthonormal embedding.

\begin{conjecture}[Optimality of standard randomized algorithms]
    Let $\mat{\varOmega} \in \field^{n \times \ell}$ be any full-rank random embedding.
    Then the following optimality results hold.
    \begin{enumerate}[label=(\alph*)]
        \item \textbf{Sketch-and-solve.} For any rank parameter $r \in \mathbb{N}$ with $r + \alpha_\field < \ell$, any problem dimensions $d, p \in \mathbb{N}$ with $d \geq r$, and any algorithm $\mathrm{ALG} : \field^{n \times \ell} \times \field^{\ell \times d} \times \field^{\ell \times p} \to \field^{d \times p}$,
        \begin{equation*}
            \sup_{\substack{\mat{A}\in \field^{n\times d},\: \mat{B} \in \field^{n\times p} \\
            \rank(\mat{A}) = r,\,\mat{B}\neq\mat{A}\mat{A}^+\mat{B}}} \frac{\expect \norm{\mat{B} - \mat{A}\cdot\mathrm{ALG}(\mat{\varOmega},\mat{\varOmega}^*\mat{A},\mat{\varOmega}^* \mat{B})}_{\rm F}^2}{\norm{\smash{\mat{B} - \mat{A} \mat{A}^+ \mat{B}}}_{\rm F}^2} 
            \ge 1 + \frac{n-\ell}{n-r}\, \frac{r}{\ell - r - \alpha_\field}.
        \end{equation*}
        \item \textbf{Randomized SVD.} 
        For any rank parameters $q,r \in \mathbb{N}$ with $q + \alpha_{\field} < \ell \leq r \leq n$,
        any problem dimension $d \in \mathbb{N}$ with $d \geq r$, and any algorithm $\mathrm{ALG} : \field^{n \times \ell} \times \field^{d \times \ell} \times \field^{n \times \ell} \to \field^{d \times n}$,
        \begin{equation*}
            \sup_{\substack{\mat{A} \in \field^{d \times n} \\ \rank(\mat{A}) = r}} \frac{\expect \norm{\mat{A} - \mathrm{ALG}(\mat{\varOmega},\mat{A}\mat{\varOmega},\mat{A}^*\mat{A}\mat{\varOmega})}_{\rm F}^2}
            {\norm{\smash{\mat{A} - \lowrank{\mat{A}}_q}}_{\rm F}^2} 
            \ge \frac{r -\ell}{r - q} \left(1 + \frac{q}{\ell - q - \alpha_\field}\right).
        \end{equation*}
        \item \textbf{Randomized Nystr\"om.} 
        For any rank parameters $q, r \in \mathbb{N}$ with $q + \alpha_{\field} < \ell \leq r \leq n$ and any algorithm $\mathrm{ALG} : \field^{n \times \ell}\times \field^{n \times \ell} \to \field^{n\times n}$,
        \begin{equation*}
            \sup_{\substack{\mat{H} \in \field^{n\times n} \text{ psd} \\ \rank(\mat{H}) = r}} \frac{\expect \norm{\mat{H} - \mathrm{ALG}(\mat{\varOmega},\mat{H}\mat{\varOmega})}_*}{\norm{\smash{\mat{H} - \lowrank{\mat{H}}_q}}_*} 
            \ge \frac{r-\ell}{r-q}\, \left(1 + \frac{q}{\ell - q - \alpha_\field}\right),
        \end{equation*}
        where $\norm{\cdot}_*$ is the nuclear norm.
    \end{enumerate}
\end{conjecture}
We make the opposite conjecture for generalized Nystr\"om approximation---we believe that the algorithm is not optimal in the worst-case.

Last, we connect our research with the ongoing mission to understand and justify the behavior of fast, structured random embeddings.
Here we have established rigorous bounds on the best possible performance that an embedding can achieve.
Our empirical results suggest that many random embeddings in common use nearly meet these limits, achieving accuracy similar to either a Gaussian embedding or a random orthonormal matrix.
As the sole exception in our experiments, the sign, SparseStack, and SparseIID embeddings sometimes exhibited suboptimal behavior in randomized SVD applications when the embedding dimension was $\ell < 20$ (\cref{sec:rsvd-discussion}).
More theory is needed to fully justify these empirical observations.

\subsection*{Acknowledgements}

We thank Micha\l\ Derezi\'nski, Raphael Meyer, Cameron Musco, and Christopher Musco for helpful discussions.
ENE is supported by a Miller Research Fellowship through the Miller Institute for Basic Research in Science, University
of California Berkeley.

\subsection*{AI Statement}

AI tools were useful to the authors in researching the matrix beta distribution and in proofreading the manuscript to identify typos.
The mathematical proofs and writing were done by the authors, who take full responsibility for the paper and its content.

\scriptsize
\bibliographystyle{halpha}
\let\oldthebibliography=\thebibliography
\let\endoldthebibliography=\endthebibliography
\renewenvironment{thebibliography}[1]%
  {\begin{oldthebibliography}{#1}%
   \setlength{\itemsep}{2pt}%
   \setlength{\parskip}{2pt}%
  }%
  {\end{oldthebibliography}}
\bibliography{refs}

\end{document}